\crefname{equation}{}{}
\Crefname{equation}{Equation}{Equations}
\theoremstyle{plain}
\newtheorem{thm}{Theorem}[section]
\crefname{thm}{theorem}{theorems}
\Crefname{thm}{Theorem}{Theorems}
\newtheorem*{mainthm}{Main Theorem}
\newtheorem{lemma}[thm]{Lemma}
\crefname{lemma}{lemma}{lemmas}
\Crefname{lemma}{Lemma}{Lemmas}
\newtheorem{prop}[thm]{Proposition}
\crefname{prop}{proposition}{propositions}
\Crefname{prop}{Proposition}{Propositions}
\newtheorem{corr}[thm]{Corollary}
\theoremstyle{definition}
\newtheorem{dfn}[thm]{Definition}
\theoremstyle{remark}
\newtheorem*{remark}{Remark}
\numberwithin{equation}{section}
\DeclareMathAlphabet{\mathcal}{OMS}{cmsy}{m}{n}
\newcommand{\Torus}{\mathbb{T}}
\newcommand{\K}{\beta}
\title[Power law asymptotics in the quasi-periodically forced quadratic family]{Power law asymptotics in the creation of strange attractors in the quasi-periodically forced quadratic family}
\author{Thomas Ohlson Timoudas}
\begin{document}

\begin{abstract}
Let $\Phi$ be a quasi-periodically forced quadratic map, where the rotation constant $\omega$ is a Diophantine irrational.
A strange non-chaotic attractor (SNA) is an invariant (under $\Phi$) attracting graph of a nowhere continuous measurable function $\psi$ from the circle $\mathbb{T}$ to $[0,1]$.

This paper investigates how a smooth attractor degenerates into a strange one, as a parameter $\K$ approaches a critical value $\K_0$,
and the asymptotics behind the bifurcation of the attractor from smooth to strange.
In our model, the cause of the strange attractor is a so-called torus collision, whereby an attractor collides with a repeller.

Our results show that the asymptotic minimum distance between the two colliding invariant curves decreases linearly in the parameter $\K$,
as $\K$ approaches the critical parameter value $\K_0$ from below.

Furthermore, we have been able to show that the asymptotic growth of the supremum of the derivative of the attracting graph is asymptotically bounded from both sides
by a constant times the reciprocal of the square root of the minimum distance above.
\end{abstract}

\maketitle

\section{Introduction}

In recent decades much attention has been directed towards the investigation of \textit{strange attractors}, attractors with a fractal or highly discontinuous structure, and how they appear.
Even to this date, most of the work is of a numerical nature, and there are only few rigorous results about them.
Here, we will present some rigorous results concerning certain asymptotics in the bifurcations of a smooth attractor into a strange one.

The term \textit{strange attractor} was coined in the early 70's in \cite{RuelleTurbulence},
where the authors made a connection between turbulence and strange attractors.
More than a decade later, \cite{GOPY} introduced the concept of a \textit{strange non-chaotic attractor} (SNA for short), strange attractors with non-positive Lyapunov exponents.

Some of the earliest constructions of SNA's can be found in \cite{MillionSNA1, MillionSNA2, HermanMinorer, RusselJohnsonSNA},
though they pre-dated the actual term (and seemed largely unknown to the early researchers on SNA's).
In the beginning, the advances were mainly numerically supported, and the standing question was whether they actually exist at all (and what they actually are).

The next question, if they should indeed exist, presented itself: could they appear outside of abstract models,
concocted in the minds of mathematicians? That is, are they of any physical relevance - can they be observed in nature?
In fact, there has been experimental evidence of SNA's in certain physical systems (see for instance \cite{ExperimentalSNA}).

In physics, it is common to have one system driven by another one. This is called forcing. The most well-known type is periodic forcing.
There is however another important, but much less understood, mode of forcing called \textit{quasi-periodic}:
\begin{align}\label{QPFSystem}
\left\{
\begin{array}{l}
\theta_{n+1} = \theta_n + \omega \\
x_{n+1} = f(\theta_n, x_n),
\end{array}
\right.
\end{align}
where $x \in \mathbb{R}$, $\theta$ lies in the circle $\mathbb{T} = [0,1]$, where 0 and 1 are identified, $\omega$ is irrational, and $f$ is smooth.
If the Lyapunov exponent of this system in the $x$-direction is negative for every $(\theta,x) \in \mathbb{T} \times (0,1)$,
then it has a continuous attracting invariant curve $\psi: \mathbb{T} \to [0,1]$ which is as smooth as $f$ (see \cite{StarkRegularityQPF}).

Already from the very beginning, the study of SNA's has been intimately linked to the study of quasi-periodically forced (one-dimensional) dynamical systems.
One early paper establishing the existence of SNA's is \cite{BezhOselSNA}.
Another early paper, \cite{KellerSNA}, proves the existence of SNA's in a certain class of pinched\footnote{In a pinched system, one of the fibres in the $x$-direction is identically mapped to $x=0$ (the invariant curve)}
quasi-periodic systems (building on the work in \cite{GOPY}). Pinched systems are also studied in \cite{HaroSNAPinched}.

Following \cite{AlsedaMis, BjerkSNA}, we will adopt the definition of an SNA as being the attracting graph of a measurable curve $\psi: \mathbb{T} \to [0,1]$ which is a.e. discontinuous.
We allow for the possibility of an attractor to attract only a set of points of positive measure, rather than an open neighbourhood of the curve (see \cite{MilnorAttractor}).

Having answered the question of existence in the affirmative, we now wish to understand how SNA's appear; in particular the kinds of bifurcations leading to their formation.
In this paper, we have obtained very precise asymptotics involved in one type of bifurcation for certain quasi-periodically forced logistic maps (an extension of the one considered in \cite{BjerkSNA, BjerkSNA2})

\begin{align}\label{QPFLM}
\left\{
\begin{array}{l}
\theta_{n+1} = \theta_n + \omega \\
x_{n+1} = (\frac32 + \K a(\theta_n)) x_n(1 - x_n),
\end{array}
\right.
\end{align}
modeled on the cylinder $\mathbb{T} \times [0,1]$, for certain $a(\theta)$ (see below), where $0 \leq a(\theta) \leq \frac52$, and $\omega$ is a Diophantine irrational.
For parameter values $0 \leq \K < 1$, we will show that the system has a smooth attracting curve (attracting $\mathbb{T} \times (0,1)$) with negative Lyapunov exponent in the $x$-direction.
However, as proved in \cite{BjerkSNA, BjerkSNA2}, the system has an SNA for $\K = 1$, which is dense in a 2-dimensional surface.
The construction is achieved without pinching (the method used in \cite{KellerSNA}).

The cause for the appearance of the SNA is a collision between the attractor and the invariant (repelling) curve at $x=0$.
In the literature, this is called a torus-collision, a well-known cause of SNA's (see for instance \cite{JorbaOldNewResultsSNA,HaroPuigSNAHarper}).
As the tori approach one another, the attractor starts "wrinkling" (the derivative increases) until it finally "shatters" to form a strange attractor.

The reason we have chosen to study the logistic family is simply because it is one of the most well-studied dynamical systems, and much is known about them (see \cite{BenedicksLogistic, LyubichLogistic, AvilaLogistic}).
The map $a(\theta)$ was chosen to be close to $0$ for most values of $\theta \in \mathbb{T}$,
in order to ensure that orbits stay close to $\frac13$ (the fixed point for $\frac32 x(1-x)$).
However, at two values $\theta = 0$ and $\theta \approx \omega$, $a(\theta)$ suddenly peaks. When $\K = 1$, the peaks reach $4$ (see \cref{CGraph1}),
producing a chain $\frac12 \mapsto 1 \mapsto 0$ (the torus collision) for a certain value of $\theta = \alpha_c$. When $0 \leq \K < 1$, the peaks are linearly scaled by that factor.

\begin{figure}[h]
    \centering
    \begin{subfigure}[b]{0.45\textwidth}
    	\includegraphics[width=\textwidth]{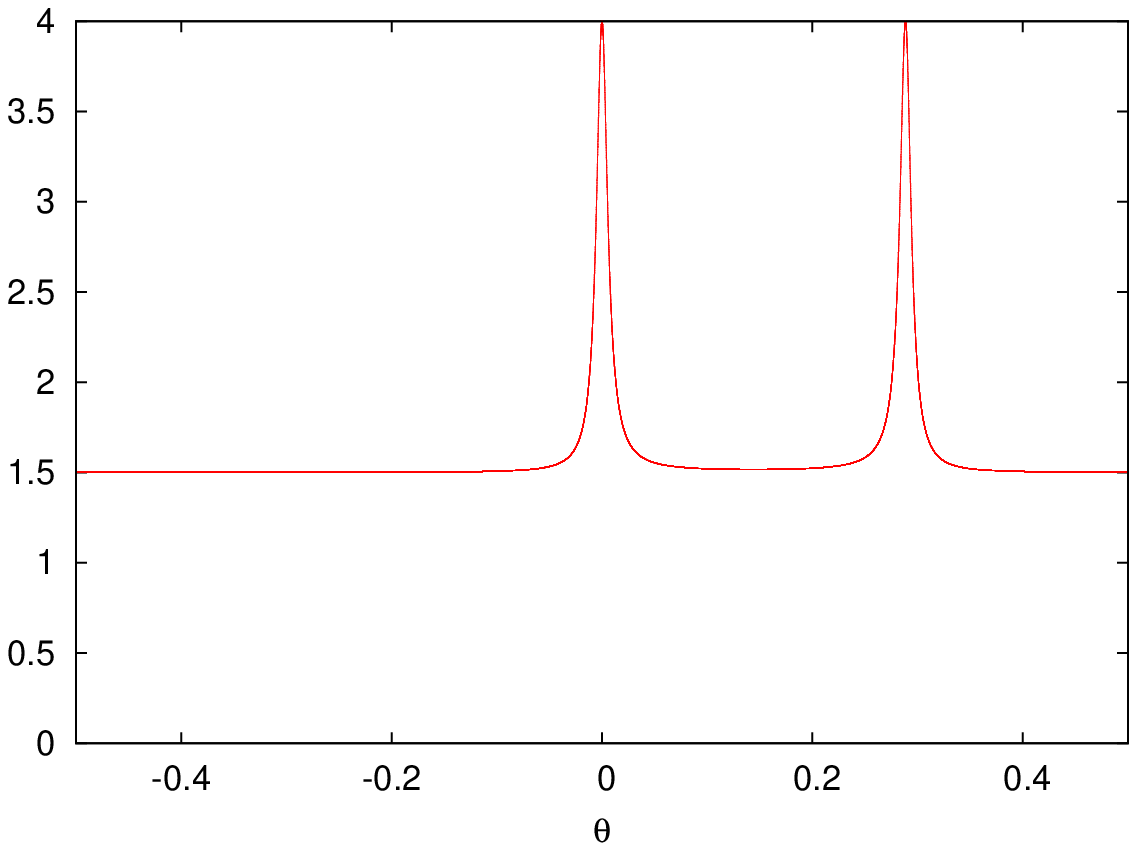}
    	\caption{$\frac32 + \K a(\theta)$ when $\K = 1$.}\label{CGraph1}
    \end{subfigure}
	\begin{subfigure}[b]{0.45\textwidth}
    	\includegraphics[width=\textwidth]{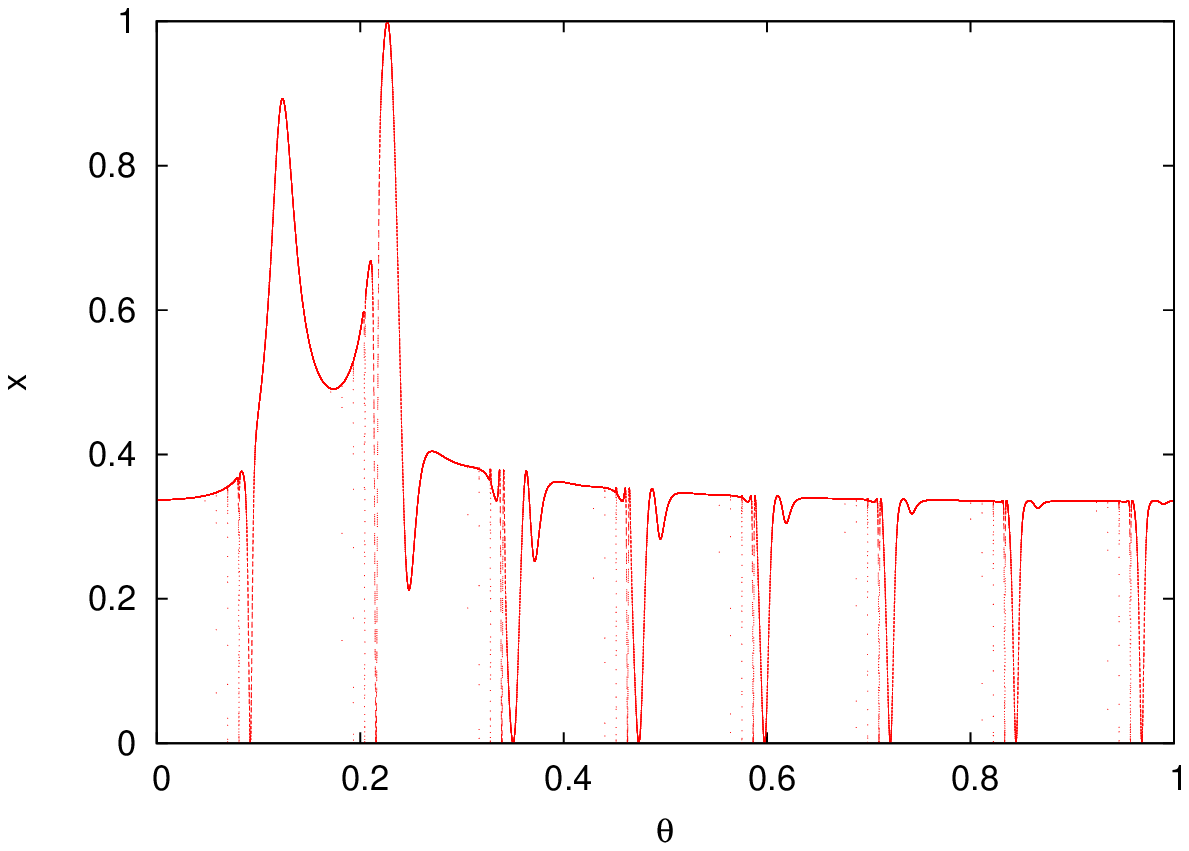}
    	\caption{The attractor when $\K = 1$.}\label{SNAGraph1}
    \end{subfigure}
    \\
    \begin{subfigure}[b]{0.45\textwidth}
    	\includegraphics[width=\textwidth]{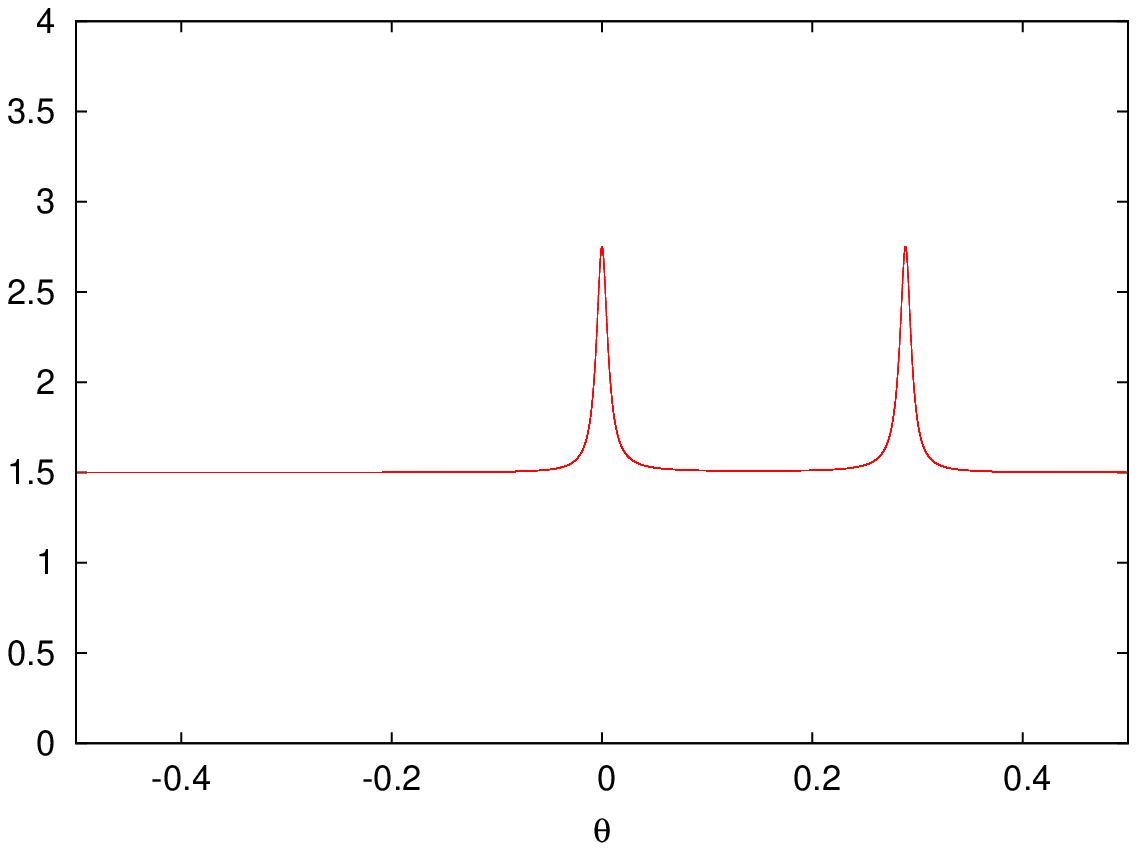}
    	\caption{$\frac32 + \K a(\theta)$ when $\K = 0.5$.}\label{CGraph0_5}
    \end{subfigure}
	\begin{subfigure}[b]{0.45\textwidth}
    	\includegraphics[width=\textwidth]{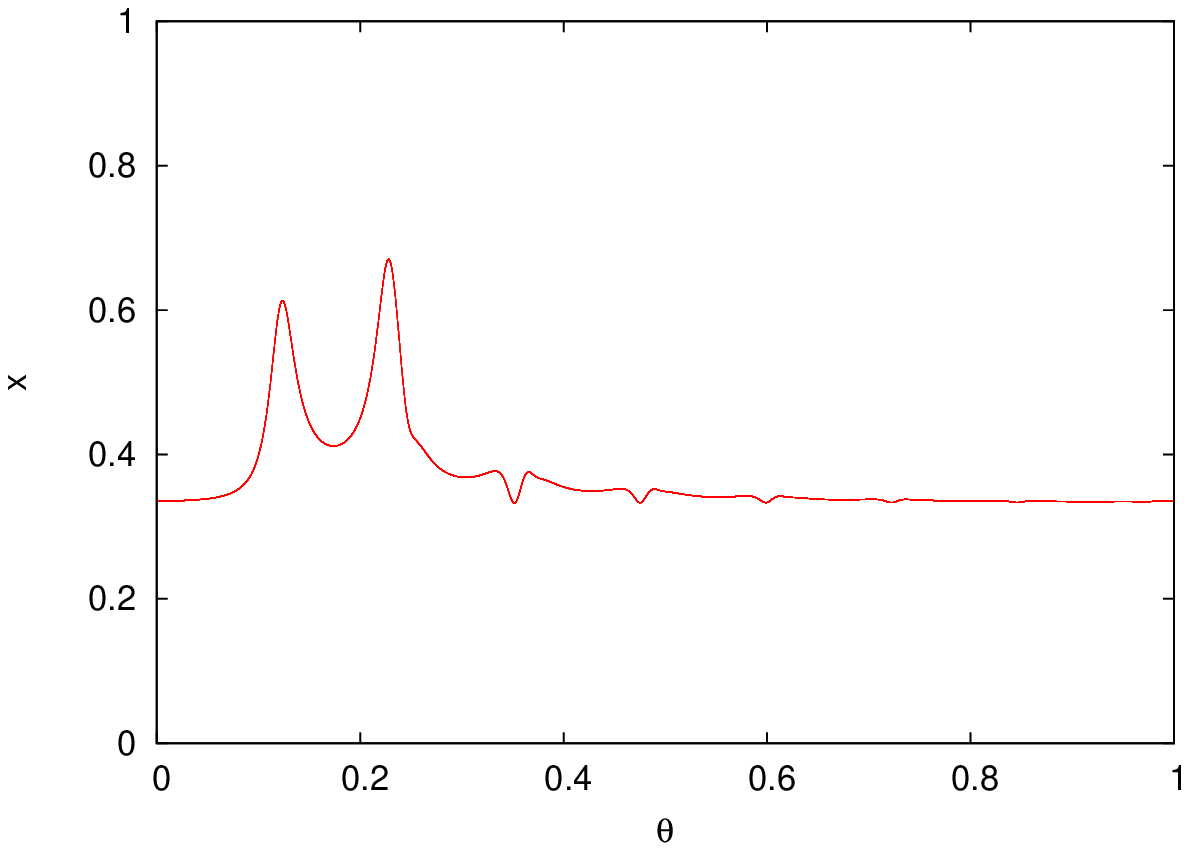}
    	\caption{The attractor when $\K = 0.5$.}\label{SNAGraph0_5}
    \end{subfigure}
\end{figure}

The concept of torus-collision has also been seen to cause loss of normal hyperbolicity in normally hyperbolic systems (see \cite{HaroLLaveHypBreakdown, BjerkHyperbolicity}).
In \cite{BjerkHyperbolicity} (where they study the projectivization of an invertible linear cocycle) the minimum distance between the tori were shown to vanish at linear speed with respect to the parameter.
It was remarked that this might be a universal phenomenon, occuring in a wide class of systems. Certainly, the same question could be asked about our model.

Returning to our model in (\ref{QPFLM}), we would like to understand the asymptotic process behind the degeneration of our smooth attractor into the SNA.
Our first result shows at which rate the minimum distance, from the repelling curve at $x = 0$ to the attractor, decreases, as $\K$ approaches 1.

In \cref{DistGraph}, we have plotted this minimum distance as obtained in our simulations.
The graph seems to suggest that the distance is asymptotically linear as $\K$ approaches 1 from below,
justifying similar observations in other models (\cite{HaroLLaveHypBreakdown, BjerkHyperbolicity}). We will prove that this is indeed the case.

\begin{figure}[h]
    \centering
    \includegraphics[scale=0.7]{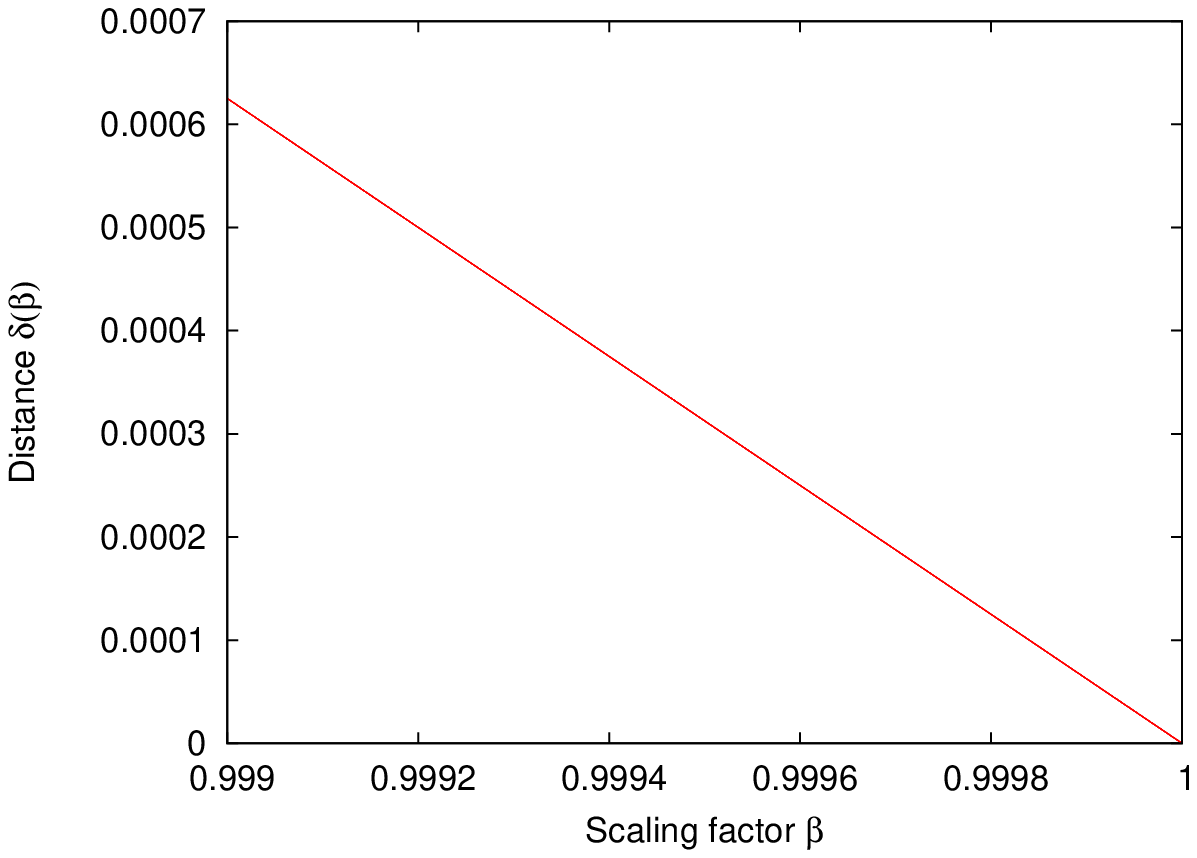}
    \caption{The minimum distance as a function of $\K$, when $\K$ is close to 1.}
    \label{DistGraph}
\end{figure}

Then, a more daring question presented itself: would it be at all possible to obtain asymptotics of how quickly the maximum derivative of the curve approaches infinity?
Our results yield the rather unexpected asymptotics that the derivative of the attractor, in the sup-norm, grows like
\begin{align*}
(1-\K)^{-1/2}
\end{align*}
as $\K$ approaches 1 from below, or approximately as one over the square root of the distance between our invariant curves.

The techniques used in this paper do not depend on the specific map, and we expect that similar systems exhibit the same asymptotic behaviours.
The exponent $-1/2$ does however seem to be related to the quadratic nature of our map,
more specifically to the non-vanishing of the second derivative of the attracting curve at the point closest to the repelling set.

It is also unknown what happens to our system (\ref{QPFLM}) when $\omega$ is not Diophantine.

\section{Model and results}

As in \cite{BjerkSNA}, let $\omega$ be an irrational number. We have introduced the parameter $0 \leq \K < 1$ to get the "extended" system (in the original model $\K = 1$ is fixed)
\begin{align*}
\Phi_{\alpha, \K}: \Torus \times [0,1] \to \Torus \times [0,1]: (\theta, x) \mapsto (\theta + \omega, c_{\alpha, \K}(\theta) \cdot p(x)),
\end{align*}
where
\begin{align*}
p(x) = x(1-x),
\end{align*}
is a quadratic (logistic) map, and
\begin{align*}
c_{\alpha, \K}(\theta) = \frac32 + \K\frac52\left( \frac1{1 + \lambda (\cos2\pi(\theta - \alpha/2) - \cos\pi\alpha)^2} \right),
\end{align*}
where $\lambda$ is assumed to be sufficiently large (depending on $\omega$), in order for the peaks to be narrow. The relationship between this $c(\theta)$ and the $a(\theta)$ is just that the $\frac32$ appearing there is moved into $c(\theta)$, and that we introduced one more parameter, $\alpha$.

The Diophantine condition reads
\begin{align*}
\inf \limits_{p \in \mathbb{Z}} |q\omega - p| > \frac{\kappa}{|q|^\tau} \text{ for all } q \in \mathbb{Z} \backslash \{0\} \tag*{$(DC)_{\kappa, \tau}$} \label{DC},
\end{align*}
for some $\kappa > 0, \tau \geq 1$. We note that the Diophantine irrationals have full (Lebesgue) measure on the interval $[0,1]$.

From this point on, we let $\omega$ be a fixed Diophantine irrational satisfying the condition \ref{DC} for some $\kappa > 0$ and $\tau \geq 1$.

For a given point $(\theta_0, x_0) \in \mathbb{T} \times [0,1]$, we write $(\theta_n, x_n) = \Phi^n(\theta_0, x_0)$.
The vertical Lyapunov exponent at the point $(\theta_0, x_0)$, we define as
\begin{align*}
\gamma(\theta_0, x_0) = \lim \limits_{n \to \infty} \frac1n \log \left| \frac{\partial x_n}{\partial x_0} \right| =
\lim \limits_{n \to \infty} \frac1n \sum \limits_{k = 0}^{n-1} \log |c(\theta_k)(1-2x_k)|,
\end{align*}
provided the limit exists. We define also
\begin{align*}
\overline{\gamma}(\theta_0, x_0) = \overline{\lim \limits_{n \to \infty}} \frac1n \sum \limits_{k = 0}^{n-1} \log |c(\theta_k)(1-2x_k)|.
\end{align*}

The following result is proved in \cite{BjerkSNA, BjerkSNA2}:

\begin{prop}\label{OldResult}
For all sufficiently large $\lambda > 0$, there is a parameter value $\alpha = \alpha_c$ such that the following holds for the map $\Phi = \Phi_{\alpha_c, \K = 1}$:
\begin{enumerate}[i)]
\item There is a strange attractor, the graph of a nowhere continuous measurable function $\psi: \mathbb{T} \to [0,1]$, which attracts points $(\theta, x)$, for a.e. $\theta \in \mathbb{T}$, and every $x \in (0,1)$.
\item $\overline{\gamma}(\theta, x) \leq \frac12 \log(3/5) < 0$ for a.e $\theta \in \mathbb{T}$ and every $x \in (0,1)$.
\item The attractor is dense in a 2D surface bounded by two continuous graphs, one identically 0, and the other one $h: \mathbb{T} \to [1/3, 1]$.
\end{enumerate}
\end{prop}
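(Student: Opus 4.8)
\medskip
\noindent\emph{Sketch of the argument.}
The plan is to carry out a multiscale inductive analysis in the spirit of \cite{BjerkSNA, BjerkSNA2}, exploiting the geometry forced by taking $\lambda$ large. For such $\lambda$ the coefficient $c_{\alpha,1}(\theta)$ is uniformly close to the constant $3/2$ outside two very narrow windows, centred at $\theta=0$ and $\theta=\alpha$, where it rises to the critical height $4$. Off the windows the fibre map $x\mapsto c_{\alpha,1}(\theta)p(x)$ is a mild contraction with attracting fixed point near $1/3$ and fibre derivative near $1/2$; on a window it is, up to a small perturbation, the full quadratic map $4x(1-x)$, which folds $[0,1]$ onto itself and realises the chain $\tfrac12\mapsto 1\mapsto 0$ onto the invariant repelling curve $x\equiv 0$. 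The phase $\alpha_c$ is chosen so that this chain is exactly hit: a fibre carried across $\tfrac12$ by one peak and transported to near $1$ is mapped onto $x\equiv 0$ by the next peak. Existence of such an $\alpha_c$ is a one-parameter intermediate-value and transversality argument, and the required size of $\lambda$ is quantified against $\kappa$ and $\tau$.

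First I would fix a small $\varepsilon>0$, put $I_{\mathrm{bad}}=\{\theta\in\mathbb{T}:c_{\alpha_c,1}(\theta)>\tfrac32+\varepsilon\}$, and note that $|I_{\mathrm{bad}}|\to 0$ as $\lambda\to\infty$, while the Diophantine condition \ref{DC} gives a lower bound, polynomial in the width of $I_{\mathrm{bad}}$, on the first return time of the rotation $\theta\mapsto\theta+\omega$ to $I_{\mathrm{bad}}$; this is the standard device preventing the expanding bursts near the peaks from accumulating. Next I would run the induction: maintain, over a covering of $\mathbb{T}$ by arcs adapted to these return times, a family of admissible vertical strips, track their $\Phi$-images, and show that (a) away from $I_{\mathrm{bad}}$ the strips contract towards a single graph, and (b) each passage over a peak produces a controlled fold dragging part of the graph to within a definite (polynomially small) distance of $x\equiv 0$, after which the contracting regime resumes before the next peak is met. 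The curve $\psi$ is then obtained as the pullback limit $\psi(\theta)=\lim_{n\to\infty}x_n$ with $(\theta_0,x_0)=(\theta-n\omega,x)$ for any fixed $x\in(0,1)$: existence of the limit for a.e.\ $\theta$ and its independence of $x$ (hence the attraction statement (i), since fibres of $(0,1)$ get squeezed together) follow from the fibrewise contraction, while a.e.\ discontinuity follows from the standard mechanism --- the forward images of the peaks equidistribute, so near a.e.\ $\theta$ there is a positive-measure set on which $\psi$ is dragged arbitrarily close to $0$, whereas $\psi>0$ a.e.

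For (ii), I would split $\frac1n\sum_{k=0}^{n-1}\log|c_{\alpha_c,1}(\theta_k)(1-2x_k)|$ according to whether $\theta_k\in I_{\mathrm{bad}}$. At good times, once the orbit is captured by the attractor, the summand is at most a constant $c^{\ast}<1$ in absolute value coming from $c\approx 3/2$, $x\approx 1/3$, with the short post-collision transients (where the factor is $\le 3/2$) absorbed using the return-time bound; at bad times it is crudely $\le\log 4$. Since the bad times have upper density $O(|I_{\mathrm{bad}}|)$, the $\limsup$ is at most $(1-o_\lambda(1))\log c^{\ast}+o_\lambda(1)\log 4$, which for $\lambda$ large enough is $\le\tfrac12\log(3/5)$; passing from attractor points to arbitrary $x\in(0,1)$ and a.e.\ $\theta$ is immediate once the orbit has entered the contracting regime away from $\{0,1\}$. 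For (iii), $x\equiv 0$ is an invariant repeller because $\int_{\mathbb{T}}\log c_{\alpha_c,1}\,d\theta\ge\log(3/2)>0$; the upper boundary $h$ is the fibrewise supremum of the closure of the attractor (built from the forward orbit of the critical curve $x\equiv\tfrac12$), and one checks from the explicit form of $c$ that it is continuous with range $[1/3,1]$; density of the attractor in the enclosed region follows from the repeated folding at the peaks together with the equidistribution of the $\omega$-orbit.

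The step I expect to be the main obstacle is precisely the inductive geometric control near the peaks: one must show that the folded pieces of the graph re-enter the contracting regime with the right size and position, uniformly along the induction, so that the attractor is a genuine graph rather than a two-dimensional object and the vertical Lyapunov exponent stays negative. This delicate bookkeeping --- not the soft arguments above --- is what dictates both the choice of $\alpha_c$ and how large $\lambda$ must be taken relative to $\kappa$ and $\tau$.
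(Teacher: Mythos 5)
This proposition has no proof in the paper: it is stated as an external result imported from \cite{BjerkSNA, BjerkSNA2} (the sentence immediately preceding it says so explicitly), so there is nothing internal to compare your attempt against. Your sketch is a recognisable reconstruction of the Bjerklöv multiscale mechanism that those references use and which the present paper then adapts in Sections 3--4 for $\K<1$: narrow peaks in $c$, contraction towards $1/3$ elsewhere, Diophantine return times amortizing the expansive bursts near the peaks, a tuned $\alpha_c$ producing the chain $\tfrac12\mapsto 1\mapsto 0$, and a good-/bad-time split for the Lyapunov estimate. Two points are worth flagging relative to what the statement actually asserts. First, (i) claims a \emph{nowhere} continuous graph, not merely a.e.\ discontinuity; that stronger conclusion is forced by (iii), because density of the attractor in a $2$-D region bounded by $0$ and $h$ implies that the cluster set of $\psi$ at \emph{every} $\theta_0$ contains both $0$ and $h(\theta_0)$, so the argument should route through (iii) rather than through equidistribution of the peak images alone. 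Second, the pullback-limit construction and the a.e.\ attraction are genuinely delicate at $\K=1$: there is no uniform fibrewise contraction (only a time-averaged negative exponent with unbounded expansive excursions near the torus collision), so existence and measurability of $\psi$ rest entirely on the inductive bookkeeping that you correctly single out as the hard step --- that bookkeeping is precisely what the cited proof supplies and what this paper does not reprove.
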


We are now ready to state the main theorem of this paper.

\begin{mainthm}
For all sufficiently large $\lambda > 0$, the following holds for the map $\Phi_\K = \Phi_{\alpha_c, \K}$, where $\alpha_c$ is as in \cref{OldResult}:
\begin{enumerate}[i)]
\item When $0 \leq \K < 1$, there is a curve, the graph of a $C^\infty$ function $\psi^\K: \mathbb{T} \to [0,1]$, which attracts every point $(\theta, x) \in \mathbb{T} \times (0,1)$.
\item When $0 \leq \K < 1$, $\overline{\gamma}(\theta, x) \leq \frac12 \log(3/5) < 0$ for every $\theta \in \mathbb{T}$ and every $x \in (0,1)$.
\item The (minimum) distance $\delta(\K)$ between the attractor $\psi^\K$ and the repelling set $\mathbb{T} \times \{0\}$, is asymptotically linear in $\K$, as $\K \to 1^-$, specifically
\begin{align*}
\delta(\K) = const \cdot (1 - \K) + o(1-\K),
\end{align*}
where the constant equals $c_{\K = 1}(\alpha_c + \omega) \cdot \frac58$.
\item The sup-norm of the derivative of $\psi^\K$ satisfies the asymptotic
\begin{align*}
\frac{C_1}{(1-\K)^{1/2}} \leq \|\partial_\theta \psi^\K\| \leq \frac{C_2}{(1-\K)^{1/2}},
\end{align*}
where $0 < C_1 \leq C_2$ are constants, as $\K \to 1^-$.
\end{enumerate}
\end{mainthm}

The above statements correspond to \cref{NegativeLyapunov} and \cref{SmoothnessOfAttractor,MinimumDistance,DerivativeGrowth}.

\begin{remark}
The existence of a smooth attractor is actually true for any $\alpha$, when $0 \leq \K < 1$, which can be shown using the techniques in this paper.
The truly "difficult" and interesting case is when $\alpha = \alpha_c$ (actually, by symmetry of the peaks, there should be a "mirror image" of $\alpha_c$ where there's an SNA).
Whenever $\alpha$ is not equal to $\alpha_c$ or its "mirror image", we expect there to be no "SNA", even when $\K = 1$ (thus postponing the bifurcation).
\end{remark}

\begin{remark}
The assumption that $\omega$ is Diophantine is for technical reasons (see \cref{DiophReturnTime}), to ensure that the orbits spend long periods away from certain "bad" regions.
We don't know if the results can be extended to non-Diophantine irrationals.
\end{remark}

Below, we will give a short discussion of the driving mechanism in our model responsible for the appearance of a smooth attractor, and later it's bifurcation into an SNA.
As long as $c(\theta)$ is close to $\frac32$ (such as when $\K$ is small), there will be an attractor given by the graph $(\theta, \psi(\theta))$ of some smooth function $\psi(\theta): \mathbb{T} \to [0,1]$ which is approximately $\frac13$.

The set $\mathbb{T} \times \{0\}$ is an invariant repelling set. The important feature of our model is that $x = 1$ is mapped directly to $x = 0$. Our $c_\alpha$ was made to be $c_\alpha(\theta) \approx \frac32$, except when $\theta$ is \textit{very} close to 0 and $\alpha$.

The interesting values of $\alpha$ will be close to $\omega$, in order to produce an orbit going through
\begin{align*}
(\alpha_c - \omega, \approx \frac13) \mapsto (\alpha_c, \frac12) \mapsto (\alpha_c + \omega, 1) \mapsto (\alpha_c + 2\omega, 0),
\end{align*}
culminating in a torus collision. This chain occurs when $\alpha = \alpha_c$ (the critical value in \cite{BjerkSNA}) and $\K = 1$. That is exactly when an SNA appears in our system.

This article has been divided into several sections, each with a separate goal in mind.

In section 3, we have collected several numerical lemmas for computations that are used repeatedly throughout the following sections.

Section 4 contains the big induction step, where we show that, excluding certain (possibly) degenerate sets, we have good control on expansion/contraction.
There, we also derive results which will be used to show that the induction can go on, even past these "degenerate sets".

All the results are tied together in section 5, which has been split into three separate parts.
In the first part, we show that there is a unique attracting curve which is the graph of a smooth map.
The second part deals with the minimum distance between the attractor and the repelling set $\mathbb{T} \times \{0\}$, and how this behaves asymptotically as the parameter $\K \to 1^-$.
Finally, in the third part, we will prove the bounds on the growth of the maximum derivative of the attracting curve.

At the beginning of each (sub)section, we will briefly sketch the main ideas of that section.
\section{Some preparations and lemmas for later}

Here, we will list some "numerical" (or "computational") lemmas to be used in the later sections.

The reason for choosing a Diophantine $\omega$ is that we then get a lower bound on the number of iterations required by the map $\theta \mapsto \theta + \omega$ to return to a small interval of $\mathbb{T}$ (\cref{DiophReturnTime}). This is a very important assumption used in our techniques.

\begin{lemma}\label{DiophReturnTime}
If $\omega \in \mathbb{T}$ satisfies the Diophantine condition \ref{DC}, and $I \subset \mathbb{T}$ is an interval of length $\varepsilon > 0$, then
\begin{align*}
I \cap \bigcup \limits_{0 < |m| \leq N} (I + m\omega) = \emptyset
\end{align*}
with $N = [(\kappa/\varepsilon)^{1/\tau}]$\footnote{$[x]$ denotes the integer part of $x$.}.
\end{lemma}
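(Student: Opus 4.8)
The plan is to argue by contradiction, turning a nonempty intersection $I\cap(I+m\omega)$ into a rational approximation $|m\omega-p|$ of $\omega$ that is sharper than \ref{DC} permits.

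If $N=[(\kappa/\varepsilon)^{1/\tau}]=0$ the indexing set $\{m:0<|m|\le N\}$ is empty and there is nothing to prove, so assume $N\ge 1$. Lift $I$ to an interval $\tilde I\subset\mathbb{R}$ of length $\varepsilon$. Suppose, toward a contradiction, that there is an integer $m$ with $1\le|m|\le N$ and a point lying in $I\cap(I+m\omega)$. Pulling this back to $\tilde I$ produces points $y,y'\in\tilde I$ and an integer $p$ with $y=y'+m\omega-p$, hence $m\omega-p=y-y'$ and
\begin{align*}
\inf_{q\in\mathbb{Z}}|m\omega-q| \;\le\; |m\omega-p| \;=\; |y-y'| \;\le\; \varepsilon,
\end{align*}
the last inequality because $y$ and $y'$ both lie in the length-$\varepsilon$ interval $\tilde I$.

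On the other hand, since $\tau\ge 1$ the map $t\mapsto t^\tau$ is nondecreasing on $[1,\infty)$, so from $1\le|m|\le N\le(\kappa/\varepsilon)^{1/\tau}$ we obtain $|m|^\tau\le\kappa/\varepsilon$, i.e. $\kappa/|m|^\tau\ge\varepsilon$. Applying \ref{DC} with $q=m\ne 0$ then gives
\begin{align*}
\inf_{q\in\mathbb{Z}}|m\omega-q| \;>\; \frac{\kappa}{|m|^\tau} \;\ge\; \varepsilon,
\end{align*}
which contradicts the previous display. Hence no such $m$ exists, and $I\cap\bigcup_{0<|m|\le N}(I+m\omega)=\emptyset$.

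I do not expect a serious obstacle here; the only points requiring care are the bookkeeping of the circle identification — lifting $I$ to $\mathbb{R}$ and choosing the integer $p$ so that both $y$ and $y'$ genuinely sit inside a single interval of length $\varepsilon$ — and keeping track of which inequalities are strict, so that the chain $|y-y'|\le\varepsilon<\inf_{q}|m\omega-q|\le|m\omega-p|=|y-y'|$ is a genuine contradiction.
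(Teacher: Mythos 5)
Your proof is correct. The paper states this lemma without proof (it is a standard, well-known consequence of the Diophantine condition), so there is no "paper version" to compare against, but your argument is the natural one: a nonempty intersection $I\cap(I+m\omega)$ lifts to two points of $\tilde I$ at distance $|m\omega-p|\le\varepsilon$, while the Diophantine inequality combined with $|m|^\tau\le\kappa/\varepsilon$ forces $\inf_q|m\omega-q|>\varepsilon$ strictly, giving the contradiction. The bookkeeping you flag (choosing the lift so that both preimages land in a single length-$\varepsilon$ interval, and noting that the strictness in $(DC)_{\kappa,\tau}$ carries the contradiction) is handled correctly.
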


We will fix, for the remainder of this paper, the following notation.
\begin{align*}
\Phi_{\alpha, \K}: \Torus \times [0,1] \to \Torus \times [0,1]: (\theta, x) \mapsto (\theta + \omega, c_{\alpha, \K}(\theta) \cdot p(x)),
\end{align*}
where $\K \in [0,1]$, $\omega$ is a Diophantine irrational number,
\begin{align*}
p(x) = x(1-x)
\end{align*}
is the quadratic map, and
\begin{align*}
c_{\alpha, \K}(\theta) = \frac32 + \K\frac52\left( \frac1{1 + \lambda g(\theta, \alpha)^2} \right),
\end{align*}
where
\begin{align*}
g(\theta, \alpha) = \cos2\pi(\theta - \alpha/2) - \cos\pi\alpha.
\end{align*}
The constant $\lambda$ will be assumed sufficiently large throughout this paper. We will often suppress the parameters $\alpha, \K$ in our notation whenever they can be understood from context.

Given $(\theta_0, x_0)$, we will use the notation
\begin{align*}
(\theta_n, x_n) = \Phi^n(\theta_0, x_0), \quad n \geq 0.
\end{align*}

We will introduce a few intervals and constants of importance later in the induction. We let
\begin{align}
I_0 &= [-\lambda^{-1/7}, \lambda^{-1/7}]; \\
\mathcal{A}_0 &= [\omega - \lambda^{-2/5}/2, \omega - 2\lambda^{-2/3}].\label{DefinitionOfA0}
\end{align}
The interval $I_0$ contains most of the $\theta$ where $c$ has its first peak, and is the first zooming interval in the induction. The interval $\mathcal{A}_0$ is where some of the interesting values of $\alpha$ lie. In particular $\alpha_c \in \mathcal{A}_0$. There is one more such interesting interval, situated slightly to the right of $\omega$, but to keep derivatives positive, we have chosen to focus on the left side of the peak at 0. Needless to say, the same techniques apply to the other interval, except that some constants might have to be tweaked.

The constants are
\begin{align*}
M_0 &= [\lambda^{1/(14\tau)}]; \\
K_0 &= [\lambda^{1/(28\tau)}],
\end{align*}
where $[x]$ denotes the integer part of $x$. They have been chosen to be $M_0 \approx \sqrt{N}$, and $K_0 \approx N^{1/4}$, where $N$ is the minimal return time to $I_0$ in \cref{DiophReturnTime}.

Also, given an interval $I$, and a $\theta_0 \in \mathbb{T}$, we denote by $N(\theta_0; I)$ the smallest non-negative integer $N$ such that $\theta_N = \theta_0 + N\omega \in I$. Note that $N(\theta_0; I) = 0$ if $\theta_0 \in I$.

The "contracting" region $C$ is given by
\begin{align*}
C = [1/3 - 1/100, 1/3 + 1/100],
\end{align*}
and corresponds to the values of $x$ where there is strong contraction, as long as $\theta \not\in I_0 \cup (I_0 + \omega)$. This is the desirable place to be, and the whole induction step is devoted to showing that orbits spend almost all their time in this region.

The following lemmas will ascertain that the perturbations of the constant in the quadratic map $c(\theta)p(x)$ will be small when $\theta \not\in I_0 \cup (I_0 + \omega)$.

In the remainder of this section, whenever the proof of a statement is omitted, it can be found in \cite{BjerkSNA}.
For each lemma, we have indicated, in brackets, the corresponding one in \cite{BjerkSNA}.

\begin{lemma}[{\cite[Lemma 3.1]{BjerkSNA}}] \label{BoundsOnFunctionC}\label{WillGetOneHalf}\label{AlphaPeakZoom}\label{CDerivativeOutsideI0}\label{CDerivativeAroundSecondPeak}
For all sufficiently large $\lambda > 0$ the following hold for $\alpha \in \mathcal{A}_0$ and $0 \leq \K \leq 1$:
\begin{enumerate}[a)]
\item $|c_{\alpha, \K}(\theta) - \frac32|, |\partial_\theta c_{\alpha, \K}(\theta)|, |\partial_\K c_{\alpha, \K}(\theta)| < 1/\sqrt{\lambda}$ for every $\theta \not\in I_0 \cup (I_0 + \omega)$.
\item For any $0 \leq \delta \leq 1$, $\{ \theta : c(\theta) \geq \left(\frac32 + \K\frac52\right) \left(1 - \delta\right) \} \cap (I_0 + \omega) \subseteq [\alpha - \sqrt{\delta}\lambda^{-1/4}, \alpha + \sqrt{\delta}\lambda^{-1/4}]$.
\item For $0 \leq \K \leq 1, \alpha \in \mathcal{A}_0$ and $\theta \in I_0 + \omega$, $\K \lambda^{1/6} \leq \partial_\theta c_{\alpha, \K}(\theta) \leq \K \lambda$.
\end{enumerate}
\end{lemma}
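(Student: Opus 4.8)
The plan is to reduce all three parts to elementary one-variable estimates on the explicit function $g(\theta,\alpha)=\cos2\pi(\theta-\alpha/2)-\cos\pi\alpha$, starting from the product identity
\begin{align*}
g(\theta,\alpha)=-2\sin(\pi\theta)\sin\big(\pi(\theta-\alpha)\big),
\end{align*}
which is just $\cos A-\cos B=-2\sin\tfrac{A+B}{2}\sin\tfrac{A-B}{2}$. Hence $g(\cdot,\alpha)$ vanishes precisely at $\theta\equiv0$ and $\theta\equiv\alpha\pmod1$: the first zero is the centre of $I_0$, and the second lies in $I_0+\omega$, since $\alpha\in\mathcal A_0$ forces $|\alpha-\omega|\le\lambda^{-2/5}/2\ll\lambda^{-1/7}$. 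I would also record $\partial_\theta g=-2\pi\sin2\pi(\theta-\alpha/2)$ (so $|\partial_\theta g|\le2\pi$), the derivative formulas $\partial_\theta c=-5\K\lambda\,g\,\partial_\theta g\,(1+\lambda g^2)^{-2}$, $\partial_\K c=\tfrac52(1+\lambda g^2)^{-1}$ and $c-\tfrac32=\K\,\partial_\K c$, and the elementary inequality $|\sin\pi t|\ge2\,\dist(t,\mathbb Z)$.

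For part (a) the crux is the lower bound $\lambda g(\theta,\alpha)^2\gtrsim\lambda^{5/7}$ for every $\theta\notin I_0\cup(I_0+\omega)$, uniformly in $\K\in[0,1]$ and $\alpha\in\mathcal A_0$ but with an implied constant depending on $\omega$. The point is that the two zeros $0$ and $\alpha\approx\omega$ of $g$ are separated by the fixed positive distance $\dist(0,\omega)$, so $\theta$ can be close to at most one of them, and in the other sine factor one gains a constant rather than $\lambda^{-1/7}$: if $\dist(\theta,0)$ exceeds a fixed fraction of $\dist(0,\omega)$ then $|\sin\pi\theta|$ is bounded below by a constant while $\theta\notin I_0+\omega$ gives $|\sin\pi(\theta-\alpha)|\ge\lambda^{-1/7}$, and in the complementary case the roles are swapped with $\theta\notin I_0$ giving $|\sin\pi\theta|\ge2\lambda^{-1/7}$. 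Either way $g^2\gtrsim\lambda^{-2/7}$, so $\lambda g^2\gtrsim\lambda^{5/7}$. Then $|c-\tfrac32|\le\tfrac52(\lambda g^2)^{-1}\lesssim\lambda^{-5/7}$, likewise $|\partial_\K c|\lesssim\lambda^{-5/7}$, and $|\partial_\theta c|\le10\pi\K\sqrt\lambda\,(\lambda g^2)^{-3/2}\lesssim\lambda^{-4/7}$; as $\tfrac47,\tfrac57>\tfrac12$, all three fall below $\lambda^{-1/2}$ for $\lambda$ large.

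Parts (b) and (c) live in the opposite regime, inside $I_0+\omega$ near the zero $\theta=\alpha$, where Taylor expansion gives $g(\theta,\alpha)^2=B^2(\theta-\alpha)^2(1+o(1))$ with $B=2\pi\sin\pi\omega>0$ fixed (from $\sin\pi(\theta-\alpha)=\pi(\theta-\alpha)+O(|\theta-\alpha|^3)$ and $\sin\pi\theta=\sin\pi\omega+O(|\theta-\omega|)$). For (b), the inequality $c(\theta)\ge(\tfrac32+\K\tfrac52)(1-\delta)$ rearranges to $\K\tfrac52\,\frac{\lambda g^2}{1+\lambda g^2}\le\delta\,(\tfrac32+\K\tfrac52)$, whence $\lambda g^2\lesssim\delta$ in the range of $\delta$ of interest; substituting $g^2\asymp B^2(\theta-\alpha)^2$ gives $|\theta-\alpha|\lesssim\sqrt{\delta/\lambda}\le\sqrt\delta\,\lambda^{-1/4}$ once $\lambda$ is large. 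For (c), differentiating and using $g\,\partial_\theta g=B^2(\theta-\alpha)(1+o(1))$ yields, on the part of $I_0+\omega$ lying on the side of $\alpha$ where $g^2$ is decreasing in $\theta$ (so $\partial_\theta c>0$) and bounded away from $\alpha$ itself,
\begin{align*}
\partial_\theta c=5\K\sqrt\lambda\,B\,\frac{\sqrt u}{(1+u)^2}\,(1+o(1)),\qquad u=\lambda B^2(\theta-\alpha)^2.
\end{align*}
Since $\sqrt u/(1+u)^2$ is bounded above by an absolute constant, the crude bound $|\theta-\alpha|\le\lambda^{-1/7}$ gives $\partial_\theta c\le5\K\lambda B^2\lambda^{-1/7}<\K\lambda$; for the lower bound one restricts to the scale $|\theta-\alpha|\sim\lambda^{-1/2}$, where $u\asymp1$ and hence $\partial_\theta c\gtrsim\K\sqrt\lambda\ge\K\lambda^{1/6}$.

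The genuinely delicate point is not any single hard step but keeping the $\lambda$-exponents straight in (a): using only $|\sin\pi\theta|,|\sin\pi(\theta-\alpha)|\ge\lambda^{-1/7}$ yields just $\lambda g^2\gtrsim\lambda^{3/7}$, which does \emph{not} beat $\lambda^{-1/2}$ after taking a reciprocal, so one truly needs the separation of the two zeros of $g$ — i.e.\ the irrationality of $\omega$ — to replace one $\lambda^{-1/7}$ by a constant, at the cost of the threshold on $\lambda$ depending on $\omega$. The second thing needing care is the sign in (c): since $g^2$ has an interior minimum at $\theta=\alpha$, $\partial_\theta c$ changes sign there, so one must stay on the correct side of $\alpha$; everything else is the calculus of $u\mapsto\sqrt u/(1+u)^2$ and of $\lambda g^2$.
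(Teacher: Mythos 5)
Your factorization $g(\theta,\alpha)=-2\sin\pi\theta\,\sin\pi(\theta-\alpha)$ is correct and makes the whole lemma transparent; it is a genuinely different and more self-contained route than the paper's, which proves only (b) directly (by Taylor expanding $c$ at $\alpha$, essentially what you do after substituting the leading-order form of $g^2$) and delegates (a) and the $\K=1$ case of (c) to the cited \cite{BjerkSNA}. Your treatment of (a) is sound and correctly pinpoints where the argument really lives: if one uses only the removed intervals to bound both sine factors one gets $\lambda g^2\gtrsim\lambda^{3/7}$, which is not enough, and it is the fixed separation $\dist(0,\omega)>0$ between the two zeros of $g$ that upgrades one factor to a constant and gives $\lambda g^2\gtrsim\lambda^{5/7}$, beating $\lambda^{-1/2}$ in all three estimates. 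Part (b) is essentially the paper's argument in different clothing.

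The gap is in the lower bound of (c). From your own expression $\partial_\theta c=5\K\sqrt\lambda\,B\,\sqrt u/(1+u)^2\cdot(1+o(1))$ with $u=\lambda B^2(\theta-\alpha)^2$, the factor $\sqrt u/(1+u)^2$ tends to $0$ both as $u\to0$ (i.e.\ $\theta\to\alpha$, and $\alpha\in I_0+\omega$) and as $u\to\infty$; since $\theta\in I_0+\omega$ allows $|\theta-\alpha|$ up to roughly $\lambda^{-1/7}$, hence $u$ up to roughly $\lambda^{5/7}$, the quantity $\K\sqrt\lambda\,\sqrt u/(1+u)^2$ drops to order $\K\lambda^{-4/7}$ near the endpoints of $I_0+\omega$ — far below $\K\lambda^{1/6}$. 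You establish the lower bound only at the single scale $|\theta-\alpha|\sim\lambda^{-1/2}$, and your closing remark flags the sign change at $\alpha$ but not this decay for large $u$. So what you have proved is that the two-sided bound in (c) holds on a $\lambda$-dependent annulus around $\alpha$, not for every $\theta\in I_0+\omega$. To match the statement you either need to restrict the range of $\theta$ as the intended application requires (the paper sidesteps this by citing \cite{BjerkSNA} and then observing $\partial_\theta c_{\alpha,\K}$ is linear in $\K$), or to identify explicitly the sub-interval of $I_0+\omega$ on which the claimed inequalities actually hold and verify that this suffices for the downstream uses of the lemma.
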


\begin{proof}
For the second statement, we calculate the Taylor series at $\theta = \alpha$, to obtain

\begin{align*}
c(\theta) &= \frac32 + \K \frac52 - 10\K\lambda\pi^2\sin^2(\pi\alpha)(\theta - \alpha)^2 + \K\lambda O((\theta - \alpha)^3)
\end{align*}
Therefore,
\begin{align*}
c(\theta) \geq \left(\frac32 + \K\frac52 \right) \left(1 - \delta\right)
\end{align*}
implies that
\begin{align*}
\K\lambda \left( 10\pi^2\sin^2(\pi\alpha)(\theta - \alpha)^2 + O((\theta - \alpha)^3) \right) \leq \left(\frac32 + \K\frac52 \right) \delta
\end{align*}
Now, $c(\alpha \pm \sqrt{\delta}\lambda^{-1/4}) < \left(\frac32 + \K\frac52 \right) \left(1 - \delta\right)$, since
\begin{align*}
\K\lambda \left( 10\pi^2\sin^2(\pi\alpha)\delta\lambda^{-1/2} + O(\delta^{3/2}\lambda^{-3/4}) \right) &= \left( 10\pi^2\sin^2(\pi\alpha)\K\lambda^{1/2} + \cdot \K O(\delta^{1/2}\lambda^{1/4}) \right) \delta \\
&> \left(\frac32 + \K\frac52 \right) \delta
\end{align*}
when $\lambda > 0$ is large (independent of $\delta$). Since $c$ is smaller further away from the peak at $\alpha$, we are done.

The third statement is proved in \cite[Lemma 3.1]{BjerkSNA} for $\K = 1$. From this it immediately follows that
\begin{align*}
\K \lambda^{1/6} < \partial_\theta c_{\alpha, \K}(\theta) < \K \lambda
\end{align*}
for every $\alpha \in \mathcal{A}_0, \theta \in I_0 + \omega$, since $\partial_\theta c_{\alpha, \K}(\theta)$ is linear in $\K$.
\end{proof}

\begin{lemma}[{\cite[Lemma 3.2]{BjerkSNA}}]\label{CContractionLemma}
Provided that $\lambda > 0$ is sufficiently large, the following statements hold for $\alpha \in \mathcal{A}_0$ and $0 \leq \K \leq 1$:
\begin{itemize}
\item If $\theta_0 \not\in I_0 \cup (I_0 + \omega)$, and $x_0 \in C$, then $x_1 \in C$, and $|c(\theta_0)p'(x_0)| < 3/5$.
\medskip
\item If $\theta_0, \dots, \theta_{19} \not\in I_0 \cup (I_0 + \omega)$, and $x_0 \in [1/100, 99/100]$, then $x_{20} \in C$.\label{20IterationsToC}
\medskip
\item If $\theta_0 \not\in I_0 \cup (I_0 + \omega)$ and $x_0 \in [1/100, 99/100]$, then $x_1 \in (1/100, 2/5)$.\label{CloseToCIfAwayFromPeak}
\medskip
\item If $x_0 \in [0, 1/10]$, then $x_1 \geq \frac54 x_0$, for every $\theta_0 \in \mathbb{T}$.\label{AscentFromBottom}
\end{itemize}
\end{lemma}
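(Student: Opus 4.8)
All four statements are direct numerical consequences of the bounds in the previous lemma (BoundsOnFunctionC), so the plan is to treat each one by reducing it to an explicit estimate on $c(\theta)$ and $p$ on the relevant range of $x$.

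First I would record the two facts that drive everything. On the one hand, $p(x)=x(1-x)$ maps $[0,1]$ into $[0,1/4]$, is increasing on $[0,1/2]$, and $p'(x)=1-2x$; on the other hand, by part (a) of (BoundsOnFunctionC), when $\theta_0\notin I_0\cup(I_0+\omega)$ we have $c(\theta_0)\in(\tfrac32-\lambda^{-1/2},\tfrac32+\lambda^{-1/2})$, so for $\lambda$ large $c(\theta_0)$ is as close to $\tfrac32$ as we wish. For the first bullet: if $x_0\in C=[1/3-1/100,1/3+1/100]$ then $|p'(x_0)|=|1-2x_0|\le 1/3+2/100<0.354$, hence $|c(\theta_0)p'(x_0)|<(\tfrac32+\lambda^{-1/2})\cdot 0.354<3/5$ for large $\lambda$; and $x_1=c(\theta_0)p(x_0)$ lies in a small neighbourhood of $\tfrac32 p(1/3)=1/3$ (since $p(C)$ is a short interval around $2/9$ and $c(\theta_0)\approx\tfrac32$), so a crude interval computation shows $x_1\in C$ once $\lambda$ is large enough that the perturbation $\lambda^{-1/2}$ and the width of $p(C)$ are both under $1/100$ in the right proportion. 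The third and fourth bullets are similarly immediate: if $x_0\in[1/100,99/100]$ and $\theta_0\notin I_0\cup(I_0+\omega)$ then $p(x_0)\le 1/4$ and $p(x_0)\ge p(1/100)=99/10000$, so $x_1=c(\theta_0)p(x_0)\le(\tfrac32+\lambda^{-1/2})/4<2/5$ and $x_1\ge(\tfrac32-\lambda^{-1/2})\cdot 99/10000>1/100$ for large $\lambda$, giving $x_1\in(1/100,2/5)$; and for the last bullet, if $x_0\in[0,1/10]$ then $p'(\xi)=1-2\xi\ge 4/5$ on $[0,x_0]$, so $p(x_0)\ge\tfrac45 x_0$, whence $x_1=c(\theta_0)p(x_0)\ge(\tfrac32-\lambda^{-1/2})\cdot\tfrac45 x_0\ge\tfrac54 x_0$ for large $\lambda$ (here $c(\theta_0)\ge 3/2-\lambda^{-1/2}$ holds for every $\theta_0$, since the peak only makes $c$ larger, so no restriction on $\theta_0$ is needed).

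The second bullet is the one requiring slightly more care, and I expect it to be the main (mild) obstacle. The strategy is: by the third bullet, one step from $[1/100,99/100]$ with $\theta\notin I_0\cup(I_0+\omega)$ lands in $(1/100,2/5)$; so $x_1\in(1/100,2/5)$. Now I iterate the map $x\mapsto c(\theta)p(x)$ with $c(\theta)\approx\tfrac32$ and track the interval. On $[1/100,2/5]$ the map is a small perturbation of $x\mapsto\tfrac32 x(1-x)$, whose only fixed point in $(0,1)$ is $1/3$ and which is a contraction toward $1/3$ on a neighbourhood of $[1/100,2/5]$ (the derivative $\tfrac32(1-2x)$ has modulus $<1$ there except very near $x=1/100$, where it is about $1.47$, but a single additional iterate pushes points up away from $1/100$, after which the derivative modulus drops below, say, $4/5$). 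Concretely I would show: there is a constant $\rho<1$ and an integer $m$ (with $m\le 19$, say $m$ around $15$ to be safe) such that for any $y_0\in[1/100,99/100]$ and any $\theta_0,\dots,\theta_{m-1}\notin I_0\cup(I_0+\omega)$, one has $|y_j-1/3|\le\max(\rho^{\,j-1}\cdot\tfrac13,\,\text{perturbation term})$, and in particular $y_m\in C$; then $20-m\ge 1$ further steps keep the orbit in $C$ by the first bullet. The perturbation term, coming from $|c(\theta_j)-\tfrac32|<\lambda^{-1/2}$ accumulated over $\le 20$ steps, is $O(\lambda^{-1/2})$, hence $\ll 1/100$ for large $\lambda$, so it does not spoil the inclusion $y_m\in C$. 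The only genuinely nontrivial check is the contraction estimate on $[1/100,2/5]$, which I would handle by partitioning that interval into a piece near $1/100$ (where one uses that the image under the unperturbed map is strictly to the right, gaining ground geometrically toward $1/3$) and the rest (where $|\tfrac32(1-2x)|\le\tfrac{21}{25}<1$), and then confirming that $19$ iterates of the perturbed dynamics suffice — a finite, explicit verification. Since this lemma is quoted from \cite[Lemma 3.2]{BjerkSNA}, I would in fact defer the detailed numerics there and only indicate the scheme above.
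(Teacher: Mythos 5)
Your treatment of the first, second, and third bullets is sound and matches the standard interval-arithmetic argument. For the first bullet, $|p'(x_0)| \leq 1/3 + 2/100$ on $C$ and $c(\theta_0) < 3/2 + \lambda^{-1/2}$ give $|c(\theta_0)p'(x_0)| < 3/5$, and tracking $c(\theta_0)p(x_0)$ with $p(C) \subset [0.218, 0.226]$ keeps $x_1$ inside $C$ once $\lambda$ is large. For the third, $p(x_0) \in [99/10000, 1/4]$ and $c(\theta_0) \approx 3/2$ place $x_1$ in roughly $(0.0148, 0.376) \subset (1/100, 2/5)$. The iteration scheme you sketch for the second bullet is also the right idea: from $(1/100, 2/5)$ the unperturbed map $x \mapsto \frac32 x(1-x)$ reaches $C$ in about $14$ steps (a direct numerical check), and the $O(\lambda^{-1/2})$ perturbation accumulated over $\leq 20$ steps is harmless, so $20$ iterations leave comfortable slack.

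However, the fourth bullet is wrong as written, and the error is not cosmetic. You invoke the mean value theorem to bound $p(x_0) \geq \frac45 x_0$ on $[0, 1/10]$ (from $p'(\xi) \geq 4/5$ there) and then claim $x_1 \geq \left(\frac32 - \lambda^{-1/2}\right)\cdot\frac45 x_0 \geq \frac54 x_0$ for large $\lambda$. But $\frac32 \cdot \frac45 = \frac65 = 1.2 < 1.25 = \frac54$, so that final inequality fails for every $\lambda$ — subtracting $\lambda^{-1/2}$ only makes it worse. The MVT estimate throws away a factor: it replaces $1 - x_0$ (which is what $p(x_0)/x_0$ actually equals) with the smaller $1 - 2\xi$. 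The fix is to skip the MVT entirely and use $p(x_0) = x_0(1 - x_0) \geq \left(1 - \frac1{10}\right) x_0 = \frac9{10} x_0$ on $[0, 1/10]$. Combined with the exact bound $c(\theta_0) \geq \frac32$ for every $\theta_0 \in \mathbb{T}$ (the perturbation term in $c$ is nonnegative, so no $\lambda^{-1/2}$ error is even needed), this yields $x_1 \geq \frac32 \cdot \frac9{10} x_0 = \frac{27}{20} x_0 > \frac54 x_0$, which is the desired conclusion.
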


\begin{lemma}[{\cite[Lemma 3.3]{BjerkSNA}}]\label{TwoStepsAfterEntry}
Suppose that $0 \leq \K \leq 1$. Then, if $\theta_0 \in \mathbb{T}$, $x_0 \geq 1/100$, and if $x_{-1} \in (0, 1/100) \cup (99/100, 1)$, then $x_2 \in [1/100, 99/100]$.
\end{lemma}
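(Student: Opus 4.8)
The plan is to carry a numerical interval enclosure for $x_n$ forward from $n=-1$ to $n=2$. The only inputs needed are the two elementary bounds $\tfrac32 \le c_{\alpha,\K}(\theta) \le 4$, valid for every $\theta \in \Torus$ and every $0 \le \K \le 1$ straight from the defining formula (since $\lambda g(\theta,\alpha)^2 \ge 0$ and $\K \le 1$), together with the facts that $p(x) = x(1-x)$ is increasing on $[0,\tfrac12]$, satisfies $p(x) = p(1-x)$, and is bounded by $\tfrac14$. No largeness of $\lambda$ is used.

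First I would exploit the hypothesis on $x_{-1}$. If $x_{-1} \in (0, 1/100)$ then $p(x_{-1}) < p(1/100) = 99/10^4$ by monotonicity; if $x_{-1} \in (99/100, 1)$ the same bound follows from $p(x_{-1}) = p(1 - x_{-1})$ with $1 - x_{-1} \in (0, 1/100)$. In either case $x_0 = c(\theta_{-1}) p(x_{-1}) < 4 \cdot 99/10^4 < 1/25$, and combined with the standing assumption $x_0 \ge 1/100$ this shows $x_0 \in [1/100, 1/25] \subset [0, \tfrac12]$. Applying monotonicity of $p$ on $[0,\tfrac12]$ and then the bounds on $c$ once more yields $x_1 = c(\theta_0) p(x_0) \in \left[ \tfrac32 p(\tfrac1{100}),\, 4 p(\tfrac1{25}) \right]$, an explicit subinterval of $[1/100, 1/5]$; in particular $x_1$ stays well to the left of $\tfrac12$. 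Repeating the same step a final time gives $x_2 = c(\theta_1) p(x_1)$ trapped between $\tfrac32$ times $p$ of the left endpoint and $4$ times $p$ of the right endpoint, and evaluating these two numbers shows $x_2 \in [1/100, 99/100]$, which is the claim.

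The one place that needs a little care is the upper estimate for $x_2$. Because the multiplier $c$ can be as large as $4$, one cannot simply use $p \le \tfrac14$ at each step — that would only give $x_2 \le 1$, not $x_2 \le 99/100$. What makes the argument go through is that the first iterate $x_1$ is forced to be small (of order $1/100$ to $1/5$, never near $\tfrac12$), so that the product $c(\theta_1)p(x_1)$ cannot get close to $1$; propagating the tighter interval rather than the crude bound is therefore essential. Everything else is routine bookkeeping with the two monotone maps $p$ and $x \mapsto c(\theta)x$.
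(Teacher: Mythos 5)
Your proof is correct, and the approach is the natural one. The paper itself does not reprove this lemma — it simply cites it from Bjerkl\"ov's earlier work — but it is a purely computational interval-propagation statement, and your argument is precisely what such a lemma requires. Your key observation (that the hypothesis $x_{-1} \in (0,1/100) \cup (99/100,1)$ together with $x_0 \geq 1/100$ pins $x_0$ into the narrow window $[1/100, 1/25)$, and that one must carry this tightened interval forward rather than fall back on the crude bound $p \le \tfrac14$) is exactly what makes the two-step enclosure close inside $[1/100, 99/100]$. The numbers check out: $x_1 \in \bigl[\tfrac32 p(\tfrac1{100}),\, 4 p(\tfrac1{25})\bigr] \approx [0.0149, 0.1536]$, and then $x_2 \in \bigl[\tfrac32 p(0.0149),\, 4 p(0.1536)\bigr] \approx [0.022, 0.52] \subset [1/100, 99/100]$. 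The only cosmetic suggestion is to display the final two endpoints explicitly, since the whole lemma rests on those numerical evaluations; but the reasoning is complete and the uniform bounds $\tfrac32 \le c_{\alpha,\K} \le 4$ and the symmetry $p(x) = p(1-x)$ are used correctly.
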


\begin{lemma}\label{IfInCThenFirstPeakDoesLittle}
Suppose that $x_0 \in C$, and $\theta_0 \in I_0$. Then for any $0 \leq \K \leq 1$
\begin{align*}
\frac3{10} < x_1 < \frac{99}{100},
\end{align*}
and
\begin{align*}
\frac1{100} < x_2.
\end{align*}
\end{lemma}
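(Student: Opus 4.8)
The plan is to derive both inequalities from the two crude global bounds
\begin{align*}
\tfrac32 \;\le\; c_{\alpha,\K}(\theta) \;\le\; 4 \qquad\text{for all } \theta\in\Torus,\ 0\le\K\le 1,
\end{align*}
together with elementary estimates for the quadratic $p(x)=x(1-x)$. The lower bound on $c_{\alpha,\K}$ holds because its fractional term is nonnegative, and the upper bound because that term is at most $1$ while $\K\le 1$. Note that the hypothesis $\theta_0\in I_0$ is not actually used for this estimate — only the fact that $c$ can be as large as $4$ — so the conclusion holds for every $\theta_0\in\Torus$; it is recorded here because that is the regime in which the lemma will be invoked.

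First I would estimate $x_1=c(\theta_0)\,p(x_0)$. Since $C=[\tfrac13-\tfrac1{100},\tfrac13+\tfrac1{100}]=[\tfrac{97}{300},\tfrac{103}{300}]$ lies strictly to the left of $\tfrac12$, the map $p$ is increasing on $C$, so $x_0\in C$ gives $p(\tfrac{97}{300})\le p(x_0)\le p(\tfrac{103}{300})$, i.e.\ $\tfrac{19691}{90000}\le p(x_0)\le\tfrac{20291}{90000}$. Multiplying by $c(\theta_0)\in[\tfrac32,4]$ yields
\begin{align*}
x_1 \;\ge\; \tfrac32\cdot\tfrac{19691}{90000} \;=\; \tfrac{19691}{60000} \;>\; \tfrac3{10},
\qquad
x_1 \;\le\; 4\cdot\tfrac{20291}{90000} \;=\; \tfrac{20291}{22500} \;<\; \tfrac{99}{100},
\end{align*}
which is the first assertion.

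For the second assertion I would feed the bound $\tfrac3{10}<x_1<\tfrac{99}{100}$ back into the same scheme. Since $p$ is concave with maximum at $\tfrac12$, on the open interval $(\tfrac1{100},\tfrac{99}{100})$ — which contains $(\tfrac3{10},\tfrac{99}{100})$ — it lies strictly above its horizontal chord, whence $p(x_1)>p(\tfrac{99}{100})=\tfrac{99}{10000}$. With $\theta_1=\theta_0+\omega$ and $c(\theta_1)\ge\tfrac32$ we conclude
\begin{align*}
x_2 \;=\; c(\theta_1)\,p(x_1) \;>\; \tfrac32\cdot\tfrac{99}{10000} \;=\; \tfrac{297}{20000} \;>\; \tfrac1{100}.
\end{align*}

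There is no genuine obstacle here: every step reduces to evaluating the fixed quadratic $p$ at a handful of explicit rationals and comparing with $\tfrac3{10}$, $\tfrac{99}{100}$ and $\tfrac1{100}$. The only mild point of care is the second estimate, whose slack is modest, so I would keep the computations in exact rational form rather than decimals to make the strict inequalities unambiguous.
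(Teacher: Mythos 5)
Your proof is correct and follows essentially the same route as the paper's: bound $c$ crudely by $\tfrac32 \le c \le 4$, evaluate $p$ at the endpoints of $C$ to get the $x_1$ bounds, and then use $p(x_1) > p(\tfrac{99}{100})$ (the paper's display contains a harmless typo, writing $x_1$ where $x_2$ is meant) to conclude $x_2 > \tfrac1{100}$. Your remark that the hypothesis $\theta_0\in I_0$ is never actually invoked is accurate and applies equally to the paper's argument.
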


\begin{proof}
The assumption means that
\begin{align*}
1/3 - 1/100 \leq \psi^\K(\theta) \leq 1/3 + 1/100.
\end{align*}
Recall that
\begin{align*}
x_1 = c_{\alpha, \K}(\theta)p(x_0).
\end{align*}
This gives us the following bounds
\begin{align*}
\frac{3}{10} < \frac32 \cdot p(1/3 - 1/100) \leq x_1 \leq 4p(1/3 + 1/100) < 99/100,
\end{align*}
and therefore
\begin{align*}
x_1 \geq \frac32 p(99/100) > 1/100.
\end{align*}
\end{proof}

\begin{lemma}\label{TimeOfAscent}
Suppose that $x_0 < 1/100$. Then the smallest $T > 0$ satisfying that
\begin{align*}
x_T \geq 1/100,
\end{align*}
satisfies
\begin{align*}
T \leq \log_{5/4} \frac1{20x_0}.
\end{align*}
\end{lemma}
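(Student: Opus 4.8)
The plan is to iterate the one-step expansion estimate near $x=0$ supplied by \cref{CContractionLemma}, namely that $x_0 \in [0,1/10]$ implies $x_1 \geq \frac54 x_0$ for every $\theta_0 \in \Torus$.

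First I would dispose of the trivial case $x_0 = 0$: then $x_n = 0$ for all $n$, no such $T$ exists, and the asserted bound reads $T \leq \log_{5/4}(+\infty)$, which is vacuous; so assume $0 < x_0 < 1/100$. Next I would establish both the finiteness of $T$ and a geometric lower bound on the iterates preceding time $T$: as long as $x_0, \dots, x_{k-1}$ all lie below $1/100$ (hence in $[0,1/10]$, so the estimate applies at steps $0,1,\dots,k-1$), an easy induction gives $x_k \geq (5/4)^k x_0$. Since $(5/4)^k x_0 \to \infty$ while every iterate satisfies $x_k \leq 1$, the orbit cannot remain below $1/100$ forever; thus the smallest $T > 0$ with $x_T \geq 1/100$ exists (it is $\geq 1$ since $x_0 < 1/100$).

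Finally, for that $T$, each of $x_0, \dots, x_{T-1}$ is strictly below $1/100$, so the induction above applies through step $T-1$ and yields $x_{T-1} \geq (5/4)^{T-1} x_0$. Combining this with $x_{T-1} < 1/100$ gives $(5/4)^{T-1} < \frac{1}{100 x_0}$, hence
\[
T < 1 + \log_{5/4}\frac{1}{100 x_0} = \log_{5/4}\frac{1}{80 x_0} \leq \log_{5/4}\frac{1}{20 x_0},
\]
which is the claimed bound (in fact with a little room to spare, since $\log_{5/4}$ is increasing).

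There is no genuine obstacle here; the only points needing a moment's care are the degenerate case $x_0 = 0$ and the verification that each intermediate iterate really does lie in $[0,1/10]$ so that the expansion estimate of \cref{CContractionLemma} may be re-applied at the next step — both of which follow immediately from the definition of $T$ as the first entry time into $[1/100,1]$.
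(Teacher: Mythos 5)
Your proof is correct and uses the same key ingredient as the paper (the $(5/4)$-expansion estimate of \cref{AscentFromBottom}, iterated while the orbit stays below $1/100$); the only cosmetic difference is that you close the argument via $x_{T-1} < 1/100$, whereas the paper instead bounds $x_T \leq 1/20$ and applies the geometric lower bound one step further, to $x_T$. Both routes give the stated bound with room to spare, and your explicit remarks on the existence of $T$ and the degenerate case $x_0 = 0$ are sensible (if not strictly necessary here).
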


\begin{proof}
First, note that, since $c(\theta) \leq 4$, also $x_T \leq 4/100 = 1/20$, because otherwise $1/100 \leq x_{T-1}$.

Since $x_k < 1/100$ for every $0 \leq k < T$, using \cref{AscentFromBottom}, we get that
\begin{align*}
\left(\frac54\right)^T x_0 \leq x_T \leq \frac1{20},
\end{align*}
or
\begin{align*}
T \leq \log_{5/4} \frac1{20x_0}.
\end{align*}
\end{proof}

Applying the product rule and the chain rule, we obtain

\begin{align*}
\partial x_{n+1} = \left( \partial c(\theta_n) \right) \cdot p(x_n) + c(\theta_n) \cdot p'(x_n) \cdot \partial x_n,
\end{align*}
where $\partial$ denotes partial differentiation with respect to either $\theta$ or $\K$. We find inductively that
\begin{align}
\partial x_{n+1} = \left( \partial c(\theta_n) \right) \cdot p(x_n) + \partial x_0 \prod \limits_{j = 0}^n c(\theta_j) \cdot p'(x_j)
+ \sum \limits_{k = 1}^n \left( \partial \theta_{k-1} p(x_{k-1}) \prod \limits_{j = k}^n c(\theta_j) \cdot p'(x_j) \right).\label{IteratedPartialDerivatives}
\end{align}
Such products will be important to us, and we will control them by controlling products of the form $\prod \limits_{j = 0}^n |c(\theta_j) \cdot p'(x_j)|$.

The following lemma is an adaptation of \cite[Lemma 3.5]{BjerkSNA}.
\begin{lemma}\label{DerivativeBounds}
Assume that $x_0 \in [0,1]$, $\partial_\theta x_0 = \partial_\K x_0 = 0$, and $\prod \limits_{j=k}^T |c(\theta_j) p'(x_j)| < (3/5)^{(T - k + 1)/2}$ for every $k \in [0, T]$,
where $T > 10 \log \lambda$ is an integer. Assume moreover that $|\partial_\theta c(\theta_k)|, |\partial_\K c(\theta_k)| < 1/\sqrt{\lambda}$ for $k \in [T - 10 \log \lambda, T]$.
Then $|\partial_\theta x_{T + 1}|,|\partial_\K x_{T + 1}| < \lambda^{-1/4}$ provided that $\lambda$ is sufficiently large.
\end{lemma}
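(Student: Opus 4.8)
The plan is to bound each of the three terms in the formula \eqref{IteratedPartialDerivatives} for $\partial x_{T+1}$ separately, using the hypothesis on the products $\prod_{j=k}^T |c(\theta_j)p'(x_j)|$ to control the geometric decay. Since $\partial_\theta x_0 = \partial_\K x_0 = 0$ by assumption, the middle term $\partial x_0 \prod_{j=0}^T c(\theta_j)p'(x_j)$ vanishes outright, so only the boundary term $(\partial c(\theta_T))p(x_T)$ and the sum $\sum_{k=1}^T (\partial\theta_{k-1} p(x_{k-1}) \prod_{j=k}^T c(\theta_j)p'(x_j))$ remain. I will write $\partial$ for either $\partial_\theta$ or $\partial_\K$ throughout and treat both cases simultaneously, since the argument is identical.

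First I would handle the leading term. We have $|\partial c(\theta_T)| < 1/\sqrt\lambda$ since $T \in [T - 10\log\lambda, T]$, and $p(x_T) = x_T(1-x_T) \le 1/4 < 1$, so this term contributes at most $1/\sqrt\lambda$, which is $o(\lambda^{-1/4})$ and in particular $< \frac13 \lambda^{-1/4}$ for $\lambda$ large. Next, for the sum, I split it at the index $k_0 := T - \lceil 10\log\lambda\rceil$ into a "recent" part $k > k_0$ and an "old" part $k \le k_0$. For the recent part, $\partial\theta_{k-1}$ is either $0$ or $1$ (when $\partial = \partial_\theta$, one has $\theta_{k-1} = \theta_0 + (k-1)\omega$, so $\partial_\theta\theta_{k-1} = 1$; when $\partial = \partial_\K$, it is $0$), and $p(x_{k-1}) \le 1/4$, while $\prod_{j=k}^T |c(\theta_j)p'(x_j)| < (3/5)^{(T-k+1)/2}$ by hypothesis; summing the geometric series $\sum_{k>k_0} (3/5)^{(T-k+1)/2}$ gives a bounded constant times the smallest term, but more crudely it is bounded by $\frac14 \cdot \frac{1}{1-\sqrt{3/5}}$, a fixed constant — this is not yet small, so I actually need to be more careful here: the "recent" block should instead be the source of the $\lambda^{-1/4}$ smallness. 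The right split is to make the recent block those $k$ with $k > T - 10\log\lambda$ where we use $|\partial c|$-type smallness only on the factor $|\partial\theta_{k-1} p(x_{k-1})|$... but $\partial_\theta\theta_{k-1} = 1$ doesn't decay. So the genuine mechanism must be: the geometric product $(3/5)^{(T-k+1)/2}$ forces the whole sum to be dominated by its last few terms, and then one uses that $T > 10\log\lambda$ makes the tail from old $k$ exponentially small in $\log\lambda$, i.e. of size $(3/5)^{5\log\lambda} = \lambda^{5\log(3/5)}$, a negative power of $\lambda$ that beats $\lambda^{-1/4}$. For the recent terms $k$ near $T$, the product is $O(1)$ but there are only $O(\log\lambda)$ of them — hmm, that gives $O(\log\lambda)$, not smallness.

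Resolving this tension is the main obstacle, and I expect the correct argument is the one in \cite[Lemma 3.5]{BjerkSNA}, adapted: one does not bound $|\partial x_{T+1}|$ by brute summation but instead runs an inductive/telescoping estimate. Define $D_n = |\partial x_n|$ and use the recursion $D_{n+1} \le |\partial c(\theta_n)|p(x_n) + |c(\theta_n)p'(x_n)| D_n$. Iterating from $n = T_0 := T - \lceil 10\log\lambda\rceil$ up to $T$, where $D_{T_0}$ is bounded by some a priori constant (coming from $|c| \le 4$, $|\partial c| \le \lambda$ on all of $\mathbb T$, and $D_0 = 0$, giving at worst $D_{T_0} \le \lambda \cdot C^{T_0}$ — but this is huge, so one must instead feed in that $\prod_{j=0}^{T_0}|c(\theta_j)p'(x_j)|$ is already $< (3/5)^{(T_0+1)/2}$ to kill the accumulated growth), one finds $D_{T+1} \le \sum_{k} |\partial c(\theta_k)| \prod_{j=k+1}^T |c(\theta_j)p'(x_j)| \cdot (\text{something}) \le \frac{1}{\sqrt\lambda}\sum_{k=T_0}^T (3/5)^{(T-k)/2} + (\text{exponentially small tail})$, and the geometric sum $\frac{1}{\sqrt\lambda}\cdot\frac{1}{1-\sqrt{3/5}} < \lambda^{-1/4}$ for $\lambda$ large. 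The key realization making the $\partial_\theta$ case work despite $\partial_\theta\theta_{k-1} = 1$: in the formula \eqref{IteratedPartialDerivatives} the $\partial\theta_{k-1}$ terms come \emph{paired} with $\partial c$-style factors only indirectly — actually the honest statement is that $\partial c(\theta_{k-1})$ appears, and $|\partial_\theta c(\theta_k)| < 1/\sqrt\lambda$ holds precisely on the range $k \in [T - 10\log\lambda, T]$ hypothesized, while for $k < T - 10\log\lambda$ the product $\prod_{j=k}^T$ supplies $(3/5)^{5\log\lambda}$ of extra smallness to absorb the crude bound $|\partial_\theta c(\theta_{k-1})| \le \lambda$. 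So the clean structure of the proof is: (1) kill the $\partial x_0$ term by hypothesis; (2) bound the $k \le T - 10\log\lambda$ tail of the sum by $\lambda \cdot \frac14 \cdot (3/5)^{5\log\lambda} \cdot \frac{1}{1-\sqrt{3/5}}$, which is $o(\lambda^{-1/4})$; (3) bound the $k > T - 10\log\lambda$ head, where $|\partial c| < 1/\sqrt\lambda$, by $\frac{1}{\sqrt\lambda}\cdot\frac14\cdot\frac{1}{1-\sqrt{3/5}} < \frac13\lambda^{-1/4}$; (4) bound the leading term $(\partial c(\theta_T))p(x_T)$ by $\frac{1}{\sqrt\lambda} < \frac13\lambda^{-1/4}$; sum up. The one point requiring care — and the main obstacle — is step (2): making precise that the accumulated derivative-growth over the first $T - 10\log\lambda$ steps, with the crude per-step bounds $|c| \le 4$ and $|\partial c| \le \lambda$, is nonetheless controlled, which relies essentially on the product hypothesis holding for \emph{every} $k \in [0,T]$, not just recent ones, so that the geometric factor $(3/5)^{(T-k+1)/2}$ is available to damp each contribution at its point of origin.
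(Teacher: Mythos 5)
Your final summary — kill the $\partial x_0$ term by hypothesis, split the remaining sum in \eqref{IteratedPartialDerivatives} at $k_0 \approx T-10\log\lambda$, bound the head (where $|\partial c(\theta_{k-1})| < 1/\sqrt\lambda$) by $O(\lambda^{-1/2})$ via the geometric series $\sum(3/5)^{(T-k+1)/2}$, and bound the tail (where $|\partial c|$ can only be estimated crudely by $O(\lambda)$) using the fact that $\prod_{j=k}^T|c(\theta_j)p'(x_j)| < (3/5)^{(T-k+1)/2} \le (3/5)^{5\log\lambda}$ is a negative power of $\lambda$ dominating the crude $\lambda$ — is correct, and is the natural argument this lemma is designed for. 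The paper itself only says the proof is ``exactly as in'' \cite[Lemma 3.5]{BjerkSNA}, so there is nothing more explicit to compare against, but the decomposition you settled on is the expected one.

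Two remarks for you to clean up. First, your opening confusion about $\partial_\theta\theta_{k-1} = 1$ comes from taking the displayed formula \eqref{IteratedPartialDerivatives} at face value; it contains a typo, and a direct unrolling of the recurrence
\begin{align*}
\partial x_{n+1} = (\partial c(\theta_n))p(x_n) + c(\theta_n)p'(x_n)\,\partial x_n
\end{align*}
shows the summands are $(\partial c(\theta_{k-1}))\,p(x_{k-1})\prod_{j=k}^n c(\theta_j)p'(x_j)$, not anything involving $\partial\theta_{k-1}$. Once this is fixed, the ``tension'' you describe in the middle of the write-up dissolves: there is no issue of $O(\log\lambda)$ many $O(1)$ terms, because every summand in the recent range carries the factor $|\partial c(\theta_{k-1})| < 1/\sqrt\lambda$. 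Second, the proof should be presented as the four-step estimate you give at the very end rather than as the sequence of false starts and self-corrections preceding it; in particular the detour through a recursion $D_{n+1}\le\dots$ and an ``a priori constant'' for $D_{T_0}$ is unnecessary, since with $\partial x_0 = 0$ the formula already decomposes $\partial x_{T+1}$ into a closed-form sum in which each contribution is damped at its origin by the hypothesized product bound.
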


\begin{proof}
Exactly as in the proof of \cite[Lemma 3.5]{BjerkSNA}.
\end{proof}

The following lemma is a restatement of \cite[Lemma 3.4]{BjerkSNA} to include the parameter $\K$, and is used in the proof of the main theorem to give a lower bound on how long it takes $x_0$ to return to $C$ after having come really close to the peaks in the $\theta$-direction.

\begin{lemma} \label{GoodReturnBound}
Let $\alpha \in \mathcal{A}_0$, and $\K \in [0,1]$ be fixed. Set
\begin{align*}
J_M = \{ \theta : c(\theta, \alpha) \geq \left(\frac32 + \K\frac52\right) \left(1 - (4/5)^M\right) \} \cap (I_0 + \omega).
\end{align*}
Then, For all sufficiently large $\lambda > 0$, the following hold for $M \geq 10$: \\
Given $\theta_0 \in (I_0 - \omega) \backslash (J_M - 2 \omega)$, and $x_0 \in [\frac1{100}, \frac{99}{100}]$, there is a $3 \leq k \leq M - 7$ such that $x_k \in [\frac1{100}, \frac{99}{100}]$. \\
Given $\theta_0 \in I_0 \backslash (J_M - \omega)$, and $x_0 \in [1/100, 2/5]$, there is a $2 \leq k \leq M - 7$ such that $x_k \in [\frac1{100}, \frac{99}{100}]$. \\
Given $\theta_0 \in (I_0 + \omega) \backslash J_M$, and $x_0 \in [\frac1{100}, \frac{99}{100}]$, there is a $1 \leq k \leq M - 7$ such that $x_k \in [\frac1{100}, \frac{99}{100}]$. \\
\end{lemma}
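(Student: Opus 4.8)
The plan is to treat the three cases essentially the same way, differing only in where the orbit starts relative to the peak of $c$ in $(I_0+\omega)$. The common principle is: when $\theta_0$ is outside the set $J_M$ (suitably translated), the value $c(\theta_j)$ at the time $j$ when $\theta_j$ hits $I_0+\omega$ is bounded away from its maximum by a definite amount controlled by $M$, so the ``bad'' iterate $x_j = c(\theta_{j-1})p(x_{j-1})$ is pushed at most to some $x_j \le 1 - (4/5)^M$-ish rather than all the way to $1$; then the next iterate $x_{j+1}$, though possibly small, is bounded below in terms of $(4/5)^M$, and \cref{TimeOfAscent} controls how many further steps it takes to climb back to $1/100$. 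I would carry the argument out as follows.

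First I would fix the case $\theta_0 \in (I_0+\omega)\setminus J_M$, so that $\theta_0$ is already in the peak interval but $c(\theta_0) < \left(\tfrac32 + \K\tfrac52\right)\left(1 - (4/5)^M\right)$. From $x_0 \in [1/100, 99/100]$ and $p(x_0) \le 1/4$ I get $x_1 = c(\theta_0)p(x_0)$, and because $c(\theta_0)$ misses its maximum by a factor $(4/5)^M$, I get a lower bound of the form $x_1 \ge const\cdot(4/5)^M$ after also using that $\theta_1 \notin I_0\cup(I_0+\omega)$ (by \cref{DiophReturnTime}, since the peak intervals are tiny and $M$ is comparable to a root of the return time $N$, consecutive iterates cannot both be near a peak). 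If $x_1 \ge 1/100$ we are done with $k=1$; otherwise $x_1 \in (const\cdot(4/5)^M,\,1/100)$, and \cref{TimeOfAscent} gives that the ascent time $T$ to reach $1/100$ satisfies $T \le \log_{5/4}\frac{1}{20 x_1} \le \log_{5/4}\frac{1}{20\,const\,(4/5)^M}$, which is $\le M - 8$ for $\lambda$ (hence $M$) large, because $\log_{5/4}(5/4)^M = M$ grows faster than $\log_{5/4}(4/5)^{-M} = M\log_{5/4}(5/4) \cdot \frac{\log(5/4)}{\log(5/4)}$... more precisely $(4/5)^{-M}$ grows strictly slower than $(5/4)^M$ only if $\log(5/4) > \log(5/4)$, which is false — so here I must be careful: the honest estimate is $T \le M\cdot\frac{\log(5/4)}{\log(5/4)} + O(1) = M + O(1)$, and the margin ``$M-7$'' rather than ``$M$'' is exactly what has to be extracted from the $\log 20$ and the constant, together with the fact that the peak miss is genuinely $(4/5)^M$ and not merely $(4/5)^{M}$ up to constants. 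This bookkeeping — showing the $-7$ (resp. $-8$, $-9$ in the earlier cases to leave room for the extra one or two pre-steps) survives — is the one place real care is needed; it is where \cite[Lemma 3.4]{BjerkSNA} does its work and I would follow that proof.

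For the second case, $\theta_0 \in I_0\setminus(J_M-\omega)$ with $x_0\in[1/100,2/5]$, I first apply one step: $\theta_1 \in I_0+\omega$ and $\theta_1 \notin J_M$ by hypothesis, and \cref{IfInCThenFirstPeakDoesLittle}-type control (or directly $p(x_0)\le p(2/5)$) keeps $x_1$ in a usable range, so that $x_1 \in [1/100,99/100]$ and I am reduced to the first case started from $\theta_1$, losing one step (hence the bound $2 \le k \le M-7$). For the third case, $\theta_0 \in (I_0-\omega)\setminus(J_M-2\omega)$ with $x_0\in[1/100,99/100]$, I take two steps: $\theta_1\in I_0$, $\theta_2\in I_0+\omega\setminus J_M$, using \cref{IfInCThenFirstPeakDoesLittle} to see $x_1 \in (3/10, 99/100)$ and then $x_2$ is the ``bad'' iterate handled as in case one, so I lose two steps (giving $3 \le k \le M-7$, with the extra slack absorbed by the $-7$). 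Throughout, \cref{DiophReturnTime} with $\varepsilon \approx \lambda^{-1/7}$ and $N \gg M$ guarantees that once we leave the peak region the subsequent ascent iterates $\theta_j$ stay outside $I_0\cup(I_0+\omega)$, so \cref{AscentFromBottom} applies at every step of the climb and \cref{TimeOfAscent} is legitimate.

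The main obstacle, as flagged above, is quantitative rather than conceptual: it is pinning down that the ascent time from the post-peak value is bounded by $M$ minus a fixed constant (the $-7$), uniformly in $\K \in [0,1]$ and $\alpha\in\mathcal{A}_0$. The growth factor $5/4$ on the way up and the ``deficit'' factor $4/5$ on the way in are reciprocals, so the climb time is $M + O(1)$ and one is fighting over the $O(1)$; it works because the deficit is a genuine $(4/5)^M$ (from the definition of $J_M$) while the constants lost to $p(x_0)\le 1/4$, to $c \approx 4$, and to the $\log 20$ in \cref{TimeOfAscent} are all $O(1)$, and because we may take $\lambda$, hence $M_0$, as large as needed. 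Once that is in hand the three cases assemble immediately, and the rest of the proof is the routine propagation of the range $[1/100,99/100]$ under one or two initial steps via \cref{IfInCThenFirstPeakDoesLittle} and \cref{CContractionLemma}.
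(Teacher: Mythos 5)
Your high-level plan is the right one, and the paper itself simply defers to \cite[Lemma 3.4]{BjerkSNA} for this proof, so the comparison is against your execution rather than an in-line argument. The mechanism you describe in the opening paragraph is correct: missing $J_M$ means $c$ falls short of its maximum by a relative factor $(4/5)^M$, which caps the ``bad'' iterate below $1$ by $\approx(4/5)^M$, which in turn bounds the subsequent small iterate from \emph{below} by $\approx(4/5)^M$, and \cref{TimeOfAscent} then gives a climb time of $M-O(1)$ with the slack coming from the $\log 20$ and the ambient constants in $c$.

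The execution, however, has an index error that makes the displayed case analysis fail. In case one ($\theta_0\in(I_0+\omega)\setminus J_M$), the iterate $x_1=c(\theta_0)p(x_0)$ is the one that is pushed \emph{up} toward $1$; since $c(\theta_0)\le(\tfrac32+\K\tfrac52)(1-(4/5)^M)$ and $p(x_0)\le\tfrac14$ you get the \emph{upper} bound $x_1\le 1-(4/5)^M$, not a lower bound of the form $const\cdot(4/5)^M$. In fact, from $x_0\in[1/100,99/100]$ and $c\ge\tfrac32$ one has $x_1\ge\tfrac32 p(99/100)>1/100$ unconditionally, so your dichotomy ``if $x_1\ge1/100$ done, otherwise $x_1\in(const\,(4/5)^M,1/100)$'' never enters its second branch. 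The correct split is on whether $x_1\le99/100$: if so you are done with $k=1$; if $x_1>99/100$, then $1-x_1\ge(4/5)^M$ gives $x_2=c(\theta_1)x_1(1-x_1)\ge\tfrac32\cdot\tfrac{99}{100}(4/5)^M$, and it is $x_2$ to which \cref{TimeOfAscent} is applied (noting $\theta_1\in I_0+2\omega$ is outside $I_0\cup(I_0+\omega)$ so \cref{AscentFromBottom} governs every step of the climb). This yields $k=2+T\le 2+M-\log_{5/4}20\le M-11\le M-7$. The other two cases then do reduce to this one after one or two preliminary steps (via \cref{CContractionLemma}, not \cref{IfInCThenFirstPeakDoesLittle}, since $x_0$ is only assumed in $[1/100,99/100]$ rather than in $C$), and the $-7$ margin comfortably absorbs the extra steps. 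The digression about ``$\log(5/4)>\log(5/4)$'' should simply be deleted; $(4/5)^{-M}=(5/4)^M$, so the climb time is $M-\log_{5/4}(20\cdot const)$, a genuinely negative $O(1)$ shift.
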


The return time to the "good" region $[1/100, 99/100]$ is bounded by $M - 7$ regardless of the value of $\K$.

\begin{proof}
Exactly as the proof in \cite[Lemma 3.4]{BjerkSNA} (we may even use the exact same estimates).
\end{proof}

The following lemma is a complement to the one above, considering what happens when we reach the peak.
Now the behaviour is crucially dependent on the value of $\K$. The failure of such a statement when $\K = 1$ is what causes the SNA.
Keep in mind that $c(\theta) \leq (\frac32 + \K\frac52)$ for every $\theta$, and hence $c(\theta) < 4$ when $\K < 1$.

\begin{lemma} \label{BadReturnBound}
For all sufficiently large $\lambda > 0$, we have the following lemma.
Let $\alpha \in \mathcal{A}_0$, and $\K \in [0,1)$ be fixed. Set
\begin{align*}
J_M = \{ \theta : c(\theta, \alpha) \geq \left(\frac32 + \K\frac52\right) \left(1 - (4/5)^M\right) \} \cap (I_0 + \omega).
\end{align*}
Then, assuming that $M \geq 10$, there is a constant (integer) $M_C = M_C(\K)$, depending only on $\K$, such that: \\
Given $\theta_0 \in (J_M - 2 \omega) \subset I_0 - \omega$, and $x_0 \in [\frac1{100}, \frac{99}{100}]$, there is a $3 \leq k \leq M_C$ such that $x_k \in [\frac1{100}, \frac{99}{100}]$. \\
Given $\theta_0 \in I_0$, and $x_0 \in [1/100, 99/100]$, there is a $2 \leq k \leq M_C$ such that $x_k \in [\frac1{100}, \frac{99}{100}]$. \\
Given $\theta_0 \in J_M \subset I_0 + \omega$, and $x_0 \in [\frac1{100}, \frac{99}{100}]$, there is a $1 \leq k \leq M_C$ such that $x_k \in [\frac1{100}, \frac{99}{100}]$.
\end{lemma}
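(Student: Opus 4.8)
The plan is to exploit the single structural feature that separates $\K<1$ from $\K=1$: since $c_{\alpha,\K}(\theta)\le\frac32+\frac52\K<4$ for all $\theta$ and $p\le\frac14$, every iterate after the first obeys $x_n\le\mu_\K:=\frac14(\frac32+\frac52\K)<1$ for $n\ge1$. I would first isolate the two consequences that do all the work. Whenever $x_n>99/100$ we have $x_n\in(99/100,\mu_\K]$, on which $p$ is decreasing, so
\begin{align*}
x_{n+1}=c(\theta_n)p(x_n)\ \ge\ \tfrac32\,p(\mu_\K)=:\nu_\K>0,\qquad x_{n+1}\ <\ 4\,p(99/100)\ <\ \tfrac1{20};
\end{align*}
while if $x_n\in[1/100,99/100]$ then $x_{n+1}\ge\frac32 p(99/100)>\frac1{100}$. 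Hence an orbit can drop below $1/100$ only by first passing through $(99/100,\mu_\K]$, and its first value below $1/100$ is then $\ge\nu_\K$. All of $\mu_\K,\nu_\K$ depend only on $\K$, and $\nu_\K\to0$ as $\K\to1^-$, which is exactly why the statement must fail in the limit.

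Next I would prove a base claim with no restriction on $\theta_0$: for every $\theta_0\in\Torus$ and $x_0\in[1/100,99/100]$ there is $1\le k\le M_C^0(\K)$ with $x_k\in[1/100,99/100]$, where $M_C^0(\K):=2+\lceil\log_{5/4}\tfrac1{20\nu_\K}\rceil$. Indeed $x_1=c(\theta_0)p(x_0)\in[\frac32 p(99/100),\mu_\K]$ and the left endpoint exceeds $1/100$, so either $x_1\le99/100$ (done with $k=1$) or $x_1\in(99/100,\mu_\K]$, in which case $x_2\in[\nu_\K,1/20)$ by the display above. If $x_2\ge1/100$, done with $k=2$; otherwise $x_2<1/100$, and by the $\tfrac54$-ascent-from-the-bottom in \cref{CContractionLemma} (valid for every $\theta$) together with \cref{TimeOfAscent} applied with starting value $x_2\ge\nu_\K$, there is a smallest $T\ge1$ with $x_{2+T}\ge1/100$ and $T\le\log_{5/4}\tfrac1{20x_2}\le\log_{5/4}\tfrac1{20\nu_\K}$; since $x_{2+T-1}<1/100$ forces $x_{2+T}<4\cdot\tfrac1{100}<\tfrac{99}{100}$, we are done with $k=2+T\le M_C^0(\K)$. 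Throughout the ascent $x_{2+j}<1/100<1/10$, so that estimate applies at each step even when $\theta$ visits a peak, and $x$ is never kicked back above $99/100$.

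With $M_C:=M_C^0(\K)+2$ the three statements then follow by shifting. For $\theta_0\in J_M\subset I_0+\omega$ the base claim gives $k\in[1,M_C^0]\subset[1,M_C]$ at once. For $\theta_0\in I_0$: since $x_1\ge\frac32 p(99/100)>1/100$ always, either $x_1\in[1/100,99/100]$ and the base claim applied to $(\theta_1,x_1)$ yields $k'\in[1,M_C^0]$ with $x_{1+k'}$ good, whence $k:=1+k'\in[2,M_C]$; or $x_1\in(99/100,\mu_\K]$, and running the base-claim argument from step $1$ onwards already gives a good $x_k$ with $2\le k\le1+M_C^0\le M_C$. For $\theta_0\in J_M-2\omega\subset I_0-\omega$: this interval sits a fixed positive distance from both peaks of $c_{\alpha,\K}$ (located at $\theta=0$ and $\theta=\alpha\approx\omega$), hence is disjoint from $I_0\cup(I_0+\omega)$ for large $\lambda$, so \cref{BoundsOnFunctionC}(a) gives $|c(\theta_0)-\frac32|<\lambda^{-1/2}$ and therefore $x_1=c(\theta_0)p(x_0)\in[1/100,99/100]$ once $\lambda$ is large; as $\theta_1\in I_0$, the case just treated applied to $(\theta_1,x_1)$ produces $k'\in[2,1+M_C^0]$ with $x_{1+k'}$ good, so $k:=1+k'\in[3,M_C]$.

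The only genuine content is the first paragraph — the chain $\K<1\Rightarrow\sup c<4\Rightarrow x_n\le\mu_\K<1\Rightarrow$ the orbit ``bottoms out'' at $\nu_\K>0$, which prevents it from lingering near the invariant repeller $\Torus\times\{0\}$ for longer than the $O_\K(1)$ ascent time. Everything afterwards is bookkeeping with \cref{CContractionLemma,TimeOfAscent,BoundsOnFunctionC}. The one point requiring care, and the reason $M_C$ is permitted to depend on $\K$, is that $M_C$ must not depend on $M$ or on $\lambda$: it does not, because $\mu_\K$, $\nu_\K$ and the ascent-time bound are explicit functions of $\K$ alone, $M$ enters only through the definition of $J_M$ and is never used quantitatively, and $\lambda$ enters only through the harmless estimate $c\approx\frac32$ away from the peaks.
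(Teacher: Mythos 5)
Your proof is correct and uses the same core mechanism as the paper's: when $\K<1$, $c\le\frac32+\frac52\K<4$ forces $x_n\le V_\K=\frac38+\frac58\K<1$ for $n\ge1$, so a visit to $(99/100,1]$ in fact lands in $(99/100,V_\K]$, the next iterate is bounded below by $\frac32 V_\K(1-V_\K)>0$, and the ascent time to $[1/100,99/100]$ is then bounded purely in terms of $\K$ via \cref{TimeOfAscent}. Your packaging — an unconditional base claim for all $\theta_0\in\mathbb{T}$ followed by shifting — is somewhat cleaner than the paper's direct case-by-case tracing, but the substance and the resulting form of $M_C(\K)$ are the same.
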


\begin{proof}
One satisfying, but not necessarily the smallest possible, value of $M_C$ is the following:
\begin{align*}
M_C = \frac{\log \frac1{150V_\K(1 - V_\K)}}{\log\frac54} + 4,
\end{align*}
where $V_\K = \frac38 + \K\frac58$. At the end of the proof, we will show that this constant is sufficient. \\

Suppose that $\theta_0 \in (J_M - 2 \omega)$, and $x_0 \in [1/100, 99/100]$.
Then by \cref{CloseToCIfAwayFromPeak}, $1/100 < x_1 < 2/5$, or $x_1 \in [1/100, 99/100]$.
Now, $1/100 < x_2 < (\frac32 + \K\frac52) p(1/2) \leq (\frac38 + \K\frac58) = V_\K$, regardless of $\theta_1 \in I_0$.
Since it is independent of $\theta_1$, the same proof as we do in $J_M - \omega$ will work for $\theta_1 \in I_0$.

In particular, if $x_2 \leq 99/100$, we only have to prove the last statement.
If however $99/100 < x_2 < V_\K$, the exact same argument as we will use to prove that case can be used.

Therefore, assume $x_2 \in [1/100, 99/100]$. The next iterate satisfies
\begin{align*}
1/100 \leq x_3 \leq (\frac32 + \K\frac52) p(1/2) \leq (\frac38 + \K\frac58) = V_\K.
\end{align*}
Since $\theta_3 \not \in I_0 \cup (I_0 + \omega)$, we obtain
\begin{align*}
\frac32 V_\K(1 - V_\K) \leq x_4 \leq 2/5.
\end{align*}
If $x_k < 1/100$, for $k \geq 3$, then by induction and \cref{AscentFromBottom} we get
\begin{align*}
x_{k+1} \geq (5/4)x_k \geq (5/4)^{k-4} \frac32 V_\K(1 - V_\K).
\end{align*}
Thus, to get a lower bound on the constant needed, we solve
\begin{align*}
\frac1{100} \leq \left(\frac54\right)^{k-4} \frac32 V_\K(1 - V_\K),
\end{align*}
whose solution is
\begin{align*}
k \geq \frac{\log \frac1{150V_\K(1 - V_\K)}}{\log\frac54} + 4.
\end{align*}
That is, it is sufficient to set $M_C \geq \frac{\log \frac1{150V_\K(1 - V_\K)}}{\log\frac54} + 4$, for the proof to hold.
\end{proof}
\section{The Induction}\label{SectionInduction}

In this section, we will build progressively longer chains of iterations, discarding certain starting values $(\theta, x)$,
and stopping the process once we reach close enough to the peaks.
We will bootstrap an induction scheme to show that these chains can be continued, for appropriate starting values, and passing the peaks at a "permissible" distance.

In more technical language, we will construct a nested sequence of sets $\mathbb{T} \supset \Theta_{-1} \supset \Theta_0 \supset \cdots \supset \Theta_n \supset \cdots$ of permissible
starting values of $\theta$. Along with this sequence, we construct a sequence $I_0 \supset I_1 \supset \cdots \supset I_n \supset \cdots$, of intervals "zooming in" on the critical
part of the peak, which will be $\alpha_c - \omega$ (where $\alpha_c$ is as in \cref{OldResult}).
This value is the interesting part of the first peak since it will "bump" the orbits into a region around $1/2$ (where the maximum of the quadratic family is attained),
preparing it for the next peak at $\alpha_c$.

We will then iterate a starting point $(\theta_0, x_0) \in \Theta_{n-1} \times C$, until $\theta_k \in I_n$. For every $\K < 1$, there is a "suitable scale" $I_{n(\K)}$, at which
this process can be easily continued beyond the set $I_{n(\K)}$.

Essentially, this continuation seems to be crucially dependent on the fact that the return of the orbit to the region $\Theta_{n(\K)} \times C$ (contracting region) occurs much
sooner than the return to the set $I_{n(\K)}$ (where the orbits may enter the expansive region).

The main result in this section is \cref{ZoomInductionStep}, which will be used repeatedly to get all the estimates we will need later.

\subsection{Base case}
Recall the set $I_0$ we considered in the previous section. Here we will show that we have control on orbits as long as $\theta_k \not\in I_0 \cup (I_0 + \omega)$.
The inductive step then shows what happens inside $I_0 \cup (I_0 + \omega)$.

We have made some slight alterations to the original statement in \cite{BjerkSNA}, but the proof is essentially the same and depends on the estimates in the previous section,
valid as long as $\theta_0 \not\in I_0 \cup (I_0 + \omega)$.

\begin{prop}\label{ZoomInductionZero}
Let $\alpha \in \mathcal{A}_0$ be fixed. There is a $\lambda_1 > 0$ such that if $\lambda \geq \lambda_1$, then the following hold: \\

\begin{enumerate}[(i)$_0$]
\item\label{ContractionStep0} If $\K \in [0,1]$,  $x_0, y_0 \in C$, and $\theta_0 \not\in I_0 \cup (I_0 + \omega)$, then, letting $N = N(\theta_0; I_0)$,
and $\xi_i \in \{tx_i + (1-t)y_i : t \in [0,1]\}$ be an arbitrary point between $x_i$ and $y_i$, for every $i \in [0, N-1]$, the following hold:
\begin{align}
\prod \limits^{N-1}_{i = k} |c(\theta_i)p'(\xi_i)| < (3/5)^{N - k} \quad \text{ for all } k \in [0, N - 1]; \label{TailContractionBaseInd}\\
\prod \limits^{k-1}_{i = 0} |c(\theta_i)p'(\xi_i)| < (3/5)^{k} \quad \text{ for all } k \in [1, N]; \label{ContractionRateBaseInd}\\
x_k \in C \quad \text{ for all } k \in [0, N]; \quad \text{ and} \label{AlwaysInCStepZero}\\
|x_k - y_k| < (3/5)^{k}|x_0 - y_0|, \quad \text{ for all } k \in [1, N].
\end{align}
\item If $\K \in [0,1]$, and $x_0 \in [1/100, 99/100]$, and $\theta_0 \not\in I_0 \cup (I_0 + \omega)$, then
\begin{align}
x_k \in [1/100, 99/100] \quad \text{ for all } k \in [0, N].\label{WhenInGoodStep0}
\end{align}
\end{enumerate}

\end{prop}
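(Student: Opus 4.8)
The plan is to prove $\text{(i)}_0$ and then derive the weaker statement about $[1/100, 99/100]$ from essentially the same argument. I would proceed by a finite induction on $k$ running from $0$ to $N$, using the single-step contraction facts collected in \cref{CContractionLemma}. The key observation is that by the definition of $N = N(\theta_0; I_0)$, none of $\theta_0, \theta_1, \dots, \theta_{N-1}$ lies in $I_0$, and moreover — this is the point that needs the geometry of $I_0$ and $I_0 + \omega$ together with \cref{DiophReturnTime} — none of them lies in $I_0 \cup (I_0 + \omega)$ either. Indeed $\theta_i \in I_0 + \omega$ would force $\theta_{i-1} \in I_0$ (impossible for $i \ge 1$ since $i - 1 < N$), and $\theta_0 \in I_0 + \omega$ is excluded by hypothesis. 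So the first bullet of \cref{CContractionLemma} applies at every index $i \in [0, N-1]$: whenever $x_i \in C$ we get $x_{i+1} \in C$ and $|c(\theta_i) p'(x_i)| < 3/5$.

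First I would establish \cref{AlwaysInCStepZero}: since $x_0 \in C$ and $\theta_i \notin I_0 \cup (I_0+\omega)$ for $i \in [0,N-1]$, an immediate induction via the first bullet of \cref{CContractionLemma} gives $x_k \in C$ for all $k \in [0, N]$. The same argument applied to $y_0 \in C$ gives $y_k \in C$ for all $k$. Since $C$ is an interval, every convex combination $\xi_i = t x_i + (1-t) y_i$ also lies in $C$, so $|c(\theta_i) p'(\xi_i)| < 3/5$ for every $i \in [0, N-1]$ by the same lemma. The two product estimates \cref{TailContractionBaseInd} and \cref{ContractionRateBaseInd} then follow by multiplying these bounds over the relevant ranges of indices (a telescoping product of at most $N$ factors each $< 3/5$). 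For the Lipschitz estimate $|x_k - y_k| < (3/5)^k |x_0 - y_0|$, I would write $x_k - y_k = \prod_{i=0}^{k-1} \big( c(\theta_i) p'(\xi_i) \big) \cdot (x_0 - y_0)$ by the mean value theorem applied coordinatewise (with a possibly different $\xi_i \in C$ at each step — here it is convenient that the statement quantifies over \emph{arbitrary} intermediate points, so the mean-value points are automatically covered), and bound the product by \cref{ContractionRateBaseInd}.

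Finally, for the last item \cref{WhenInGoodStep0}, I would note that the third bullet of \cref{CContractionLemma} sends $[1/100, 99/100]$ into $(1/100, 2/5) \subset C$ after one step (again using $\theta_0 \notin I_0 \cup (I_0+\omega)$, hence $\theta_1, \dots, \theta_{N-1}$ as well), after which the argument above keeps the orbit in $C \subset [1/100, 99/100]$ for all remaining indices up to $N$; the case $N = 0$ is trivial and $N = 1$ is handled directly. The main obstacle, such as it is, is the bookkeeping that guarantees $\theta_i \notin I_0 \cup (I_0 + \omega)$ for the full range $i \in [0, N-1]$ rather than merely $\theta_i \notin I_0$; everything else is a routine induction feeding off \cref{CContractionLemma}. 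One should also double-check that $\lambda \ge \lambda_1$ is taken large enough that \cref{CContractionLemma} (and the lemmas it depends on) hold, which is why the proposition carries that hypothesis.
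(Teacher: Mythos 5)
Your treatment of $(i)_0$ is correct and is the natural argument. The observation that $\theta_i \notin I_0 \cup (I_0+\omega)$ for all $i \in [0, N-1]$ follows exactly as you say (this is pure bookkeeping and does not actually require \cref{DiophReturnTime} — if $\theta_i \in I_0 + \omega$ then $\theta_{i-1} \in I_0$, already excluded by the definition of $N$ and the hypothesis on $\theta_0$), and then iterating the first bullet of \cref{CContractionLemma} keeps $x_k, y_k \in C$, covers every $\xi_i$ because $C$ is convex, and yields the two product bounds and, via the mean value theorem, the Lipschitz estimate. The paper itself gives no explicit proof here, deferring to \cite{BjerkSNA}, but what you wrote is the expected argument.

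There is, however, a concrete error in your proof of $(ii)_0$: the inclusion $(1/100, 2/5) \subset C$ is false, and indeed goes the wrong way. Here $C = [1/3 - 1/100,\, 1/3 + 1/100]$ is a tiny interval around $1/3$, so $C \subset (1/100, 2/5)$, not the other way around. Consequently you cannot conclude $x_1 \in C$ and then propagate using the first bullet; for a generic $x_0 \in [1/100, 99/100]$, the orbit need not enter $C$ after a single step (it takes of the order of $20$ steps, per the second bullet). The fix is simpler than your route: since $(1/100, 2/5) \subset [1/100, 99/100]$, the third bullet of \cref{CContractionLemma} can be applied repeatedly. That is, by induction, if $x_{k-1} \in [1/100, 99/100]$ and $\theta_{k-1} \notin I_0 \cup (I_0 + \omega)$, then $x_k \in (1/100, 2/5) \subset [1/100, 99/100]$, giving \cref{WhenInGoodStep0} directly with no reference to $C$ at all. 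Also note that $N \geq 1$ always under the hypotheses (since $\theta_0 \notin I_0$), so the "$N=0$ is trivial" remark is vacuous.
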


\subsection{Inductive step}
The inductive step works by zooming in on intervals $I_n \subset I_0$, and showing that we have a good control on orbits as long as $\theta_k \not\in I_n \cup (I_n + \omega)$.
At some point we must ask ourselves what happens to orbits when they enter $I_n$.
This is highly dependent on $\alpha$ and $\K$, but the essence of our method is that as long as $\K < 1$, we can find a suitable $I_n$
such that we will be able to retain control even throughout the interval $I_n$, and for all time thereafter.

We will begin by introducing some notation. Suppose that we are given intervals $I_0, \dots, I_n$, and constants $K_0, \dots, K_n, M_0, \dots, M_n$. We then define the sets

\begin{align}
&\Theta_n = \mathbb{T} \backslash \bigcup \limits_{i = 0}^n \bigcup \limits_{m = -M_i}^{M_i} (I_i + m\omega), \ \Theta_{-1} = \mathbb{T} \backslash (I_0 \cup (I_0 + \omega)), \\
&G_n = \bigcup \limits_{i = 0}^n \bigcup \limits_{m = 0}^{3K_i} (I_i + m\omega), \ G_{-1} = \emptyset, \\
&B_n = \{ \K : M_C(\K) \leq 2K_n - 2\} \label{Bn},
\end{align}
where $M_C(\K)$ is the constant in \cref{BadReturnBound}.

We see that, for every $n \geq 0$, the following hold
\begin{align*}
&\Theta_{n} \subseteq \Theta_{n-1} \\
&G_{n-1} \subseteq G_n \\
&B_n \subseteq B_{n+1}, \quad \text{ and } \bigcup \limits_{n = 0}^\infty B_n = [0,1)
\end{align*}
The ideas behind the respective sets are:

\begin{itemize}
\item The set $\Theta_n$ consists of the points $\theta \in \mathbb{T}$ that are far away from each of the intervals $I_0, \dots, I_n$.
Starting with a $\theta_0 \in \Theta_n$ gives us some "breathing room" before we get close to the peaks.
\item The set $G_n$ consists of the points $\theta$ which have recently visited one of the intervals $I_i$, and are well on their way to recover (start contracting again).
If we hit the peak at $I_0$, but stay away from $I_{n+1}$, then we should be close to $C$, in the $x$-direction (and far away from the peaks in the $\theta$-direction),
when we exit $G_n$, giving us a very long time to contract.
\item The set $B_n$ is the set of $\K$ for which it is necessary only to zoom as far as to the $n$-th scale (the interval $I_n$) in order to obtain good estimates on the contraction,
for all time, even past the return of $\theta$ to that interval.
\end{itemize}

The below proposition is a modified version of the main induction in \cite{BjerkSNA}, and some of the constructions have also been slightly modified.
This is the place where the Diophantine condition is used.

\begin{prop}\label{ZoomInductionStep}
Let $\alpha \in \mathcal{A}_0$ be fixed. There is a $\lambda_1 > 0$ such that if $\lambda \geq \lambda_1$, then the following hold: \\

Suppose that for some $n \geq 0$, we have constructed closed intervals $I_0 \supset I_1 \supset \cdots \supset I_n$, and chosen integers $M_0 < M_1 < \cdots < M_n$ and $K_0 < K_1 < \cdots < K_n$, satisfying
\begin{align}
&|I_k| = (4/5)^{K_{k-1}}, \quad K_k \in [(5/4)^{K_{k-1}/(4\tau)}, 2(5/4)^{K_{k-1}/(4\tau)}], \quad \text{ for } k = 1, 2, \dots, n;\label{IndConstants} \\
&M_k \in [(5/4)^{K_{k-1}/(2\tau)}, 2(5/4)^{K_{k-1}/(2\tau)}], \quad \text{ for } k = 1, 2, \dots, n; \quad \text{ and} \\
&I_n \supseteq [\alpha - (4/5)^{K_n}, \alpha + (4/5)^{K_n}]\label{IndInterval}.
\end{align}

Assume furthermore that the following holds:
\begin{enumerate}[(i)$_n$]
\item If $\K \in [0,1]$,  $x_0, y_0 \in C$, and $\theta_0 \in \Theta_{n-1}$, then, letting $N = N(\theta_0; I_n)$, and $\xi_i \in \{tx_i + (1-t)y_i : t \in [0,1]\}$ be an arbitrary point between $x_i$ and $y_i$, for every $i \in [0, N-1]$, the following hold:
\begin{align}
\prod \limits^{N-1}_{i = k} |c(\theta_i)p'(\xi_i)| < (3/5)^{(1/2 + 1/2^{n+1})(N - k)} \quad \text{ for all } k \in [0, N - 1]; \label{TailContraction}\\
\prod \limits^{k-1}_{i = 0} |c(\theta_i)p'(\xi_i)| < (3/5)^{(1/2 + 1/2^{n+1})k} \quad \text{ for all } k \in [1, N]; \label{HeadContraction}\\
x_k \not\in C \quad \text{ for some } k \in [0, N] \Rightarrow \theta_k \in G_{n-1}; \quad \text{ and} \label{WhenNotInC}\\
|x_k - y_k| < (3/5)^{(1/2 + 1/2^{n+1})k}|x_0 - y_0|, \quad \text{ for all } k \in [1, N], \label{ZoomContractionRate}\\
\bigcup \limits_{k = 0}^{20}(I_n + (2K_n + k)\omega) \subseteq \Theta_{n-1}, I_n - M_n \omega \in \Theta_{n-1}. \label{ReturnToTheta}
\end{align}
\item  If $\K \in [0,1]$, $x_0 \in [1/100, 99/100]$, and $\theta_0 \not\in I_0 \cup (I_0 + \omega)$, then
\begin{align}
x_k \not \in [1/100, 99/100] \text{ and } k \in [0, N(\theta_0; I_n)] \Rightarrow \theta_k \in G_{n-1}.\label{IfBadThenInG}
\end{align}
\item If $\K \in [0,1]$, $x_0 \in C$, and $\theta_0 \not\in I_n$, then, letting $N = N(\theta_0; I_n)$
\begin{align}
x_N \in C.\label{InCWhenReturnToBad}
\end{align}
\end{enumerate}

Then there is a closed interval $I_{n+1} \subset I_n$, and integers $M_{n+1}, K_{n+1}$ satisfying (\ref{IndConstants} - \ref{IndInterval})$_{n+1}$ such that $(i-iii)_{n+1}$ hold. \\

Moreover, under the same assumptions, the following holds:
\begin{enumerate}[(i)$_n$]\label{QuickReturnFromWorstToGood}
\setcounter{enumi}{3}

\item If $\K \in B_n$, $x_0 \in [1/100, 99/100]$, $0 \leq k \leq n$, and $\theta_0 \in (I_k - \omega) \cup I_k \cup (I_k + \omega)$, but $\theta_0 \not\in (I_j - \omega) \cup I_j \cup (I_j + \omega)$ for $k < j \leq n$, then
\begin{align}
&\theta_{2K_k + i} \in \Theta_{k-1}, \quad \text{ for every } i \in [0, 20]; \quad \text{ and} \\
&x_{2K_k + 20} \in C.
\end{align}
\end{enumerate}
\end{prop}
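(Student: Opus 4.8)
\emph{Overall shape.} The step closely follows the main induction of \cite{BjerkSNA} (the $\K=1$ case); the new content is to carry the parameter $\K$, which enters only through the return time $M_C(\K)$ of \cref{BadReturnBound} and the sets $B_n$, and to check that for $\K<1$ the zooming never stalls. Write $\rho_m = 1/2 + 2^{-(m+1)}$, so $\rho_0 = 1$ and $\rho_{m+1} - \tfrac12 = \tfrac12(\rho_m - \tfrac12)$, and let $N_m$ be the minimal return time of $\theta\mapsto\theta+\omega$ to $I_m$, which by \cref{DiophReturnTime} is $\asymp|I_m|^{-1/\tau}$. The plan has four stages: fix the new data $I_{n+1}, K_{n+1}, M_{n+1}$; transfer $(iii)_{n+1}$ and $(ii)_{n+1}$ by chaining their level-$n$ forms across successive returns to $I_n$; prove $(i)_{n+1}$ by amortizing over the same decomposition; and read off $(iv)_n$ from the return lemmas.

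\emph{Stage 1: the new scale.} Put $|I_{n+1}| = (4/5)^{K_n}$, so $N_{n+1}\asymp(5/4)^{K_n/\tau}$, and choose integers $K_{n+1}\in[(5/4)^{K_n/(4\tau)}, 2(5/4)^{K_n/(4\tau)}]$ and $M_{n+1}\in[(5/4)^{K_n/(2\tau)}, 2(5/4)^{K_n/(2\tau)}]$ (each range holds many integers for $\lambda$ large), so that $K_{n+1}\asymp N_{n+1}^{1/4}$ and $M_{n+1}\asymp N_{n+1}^{1/2}$. Take $I_{n+1}$ to be a closed subinterval of $I_n$ of this length containing $[\alpha - (4/5)^{K_{n+1}}, \alpha + (4/5)^{K_{n+1}}]$ and the deep-peak shift $J_{2K_n} - \omega$, with $J_M$ as in \cref{GoodReturnBound}; this is possible because by \cref{AlphaPeakZoom}(b) $|J_{2K_n}|\le 2(4/5)^{K_n}\lambda^{-1/4} < |I_{n+1}|$, and, more to the point, $(4/5)^{K_{n+1}}\ll(4/5)^{K_n}$ leaves freedom of order $|I_{n+1}|$ in positioning $I_{n+1}$ within $I_n$. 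That freedom, together with the choice of $K_{n+1}, M_{n+1}$ inside their ranges, is used to secure (\ref{ReturnToTheta})$_{n+1}$ --- i.e.\ $I_{n+1}+m\omega\subseteq\Theta_n$ for $0\le m\le 2K_{n+1}+20$ and $I_{n+1}-M_{n+1}\omega\subseteq\Theta_n$ --- since the "resonant" choices form a union of $O(M_n)$ intervals whose total length is, by the Diophantine inequality \ref{DC}, well below the available freedom once $\lambda$ is large; this is the single point at which the Diophantine hypothesis is invoked. Conditions (\ref{IndConstants})--(\ref{IndInterval})$_{n+1}$ are then immediate.

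\emph{Stages 2 and 3: chaining and amortization.} Fix $x_0\in C$ and, for the return statements, $\theta_0\notin I_{n+1}$; set $N = N(\theta_0; I_{n+1})$ and list the visits of $(\theta_k)$ to $I_n$ in $[0,N]$ as $0\le\sigma_1<\cdots<\sigma_r = N$, so $\theta_{\sigma_j}\in I_n\setminus I_{n+1}$ for $j<r$ and $\theta_{\sigma_r}\in I_{n+1}$. The block $[0,\sigma_1]$ (when $\theta_0\notin I_n$) is handled verbatim by $(iii)_n$. At $\sigma_j$, $j<r$: $\theta_{\sigma_j}\in I_0\setminus(J_{2K_n}-\omega)$ and $x_{\sigma_j}\in C\subset[1/100,2/5]$, so \cref{GoodReturnBound} returns $x$ to $[1/100,99/100]$ within $\le 2K_n$ steps, unconditionally --- even though for $n\ge1$ the orbit re-approaches the coarser scales $I_0,\dots,I_{n-1}$ meanwhile; those re-approaches and the subsequent relaxation into $C$ are absorbed by \cref{CContractionLemma}, \cref{TwoStepsAfterEntry} and the hypotheses $(ii)_n, (iii)_n$, and, using (\ref{ReturnToTheta})$_n$ (which puts $\theta$ in $\Theta_{n-1}\subset\mathbb{T}\setminus(I_0\cup(I_0+\omega))$ at times $\sigma_j+2K_n+i$, $i\in[0,20]$, with $\theta\notin I_n$ there since $2K_n+20\ll N_n$), one obtains $x_{\sigma_j+2K_n+20}\in C$ and may restart $(iii)_n$ there, reaching $\sigma_{j+1}$ with $x\in C$. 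Iterating gives $x_N\in C$, which is $(iii)_{n+1}$; along the way $x$ leaves $[1/100,99/100]$ only while $\theta\in\bigcup_{0\le m\le 3K_n}(I_n+m\omega)\cup G_{n-1}\subseteq G_n$, which, with $(ii)_n$ on the blocks, is $(ii)_{n+1}$. For $(i)_{n+1}$ take $\theta_0\in\Theta_n$ (so $\sigma_1>M_n$) and $x_0,y_0\in C$, and split each $[\sigma_j,\sigma_{j+1}]$ into a bad part $[\sigma_j,\sigma_j+2K_n+19]$ and a contraction part $[\sigma_j+2K_n+20,\sigma_{j+1}-1]$; the head part and each contraction part start in $\Theta_{n-1}\times C$ (by the above and (\ref{ReturnToTheta})$_n$) and run for exactly a return time to $I_n$, so $(i)_n$ gives a factor $<(3/5)^{\rho_n\ell}$ over length $\ell$, while a bad part gives $\le 4^{2K_n+20}$ (trivially $|c\,p'|\le4$). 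Hence over any suffix $[k,N-1]$ the product is at most $(3/5)^{\rho_n(N-k)-(2K_n+20)b(\rho_n+q)}$ with $q=\log_{5/3}4$ and $b$ the number of bad parts met; either $[k,N-1]$ contains a full bad part, forcing $N-k\ge N_n$ and $b\le(N-k)/N_n+1$, so that $(2K_n+20)b(\rho_n+q)\le(\rho_n-\rho_{n+1})(N-k)$ because $K_n\asymp N_n^{1/4}$, or $[k,N-1]$ lies inside or just past one bad part, in which case an adjacent contraction part of length $\ge N_n-O(K_n)$ (or the head part, of length $>M_n$) swamps the single factor $4^{2K_n+20}$; either way the product is $\le(3/5)^{\rho_{n+1}(N-k)}$, which is (\ref{TailContraction})$_{n+1}$, with (\ref{HeadContraction})$_{n+1}$ the case $k=0$. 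Then (\ref{ZoomContractionRate})$_{n+1}$ follows from $x_k-y_k=(x_0-y_0)\prod_{i<k}c(\theta_i)p'(\xi_i)$ (MVT), and (\ref{WhenNotInC})$_{n+1}$ is read off the blocks exactly as $(ii)_{n+1}$ was; with (\ref{ReturnToTheta})$_{n+1}$ from Stage 1, $(i)$--$(iii)_{n+1}$ hold.

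\emph{Stage 4, and the expected obstacle.} For $(iv)_n$: let $\K\in B_n$, $x_0\in[1/100,99/100]$, and $\theta_0\in(I_k-\omega)\cup I_k\cup(I_k+\omega)$ avoiding the analogous triples for $I_{k+1},\dots,I_n$. If $k<n$ then $\theta_0\notin I_{k+1}\supseteq J_{2K_k}-\omega$, so \cref{GoodReturnBound} --- after at most one preliminary step when $\theta_0\in I_k$ and $x_0>2/5$ (which puts $\theta_1\in I_k+\omega$, $x_1\in[1/100,99/100]$) --- returns $x$ to $[1/100,99/100]$ in $\le2K_k$ steps; if $k=n$ there is no deeper scale, but $\K\in B_n$ gives $M_C(\K)\le2K_n-2$, so \cref{BadReturnBound} does the same. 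In either case \cref{CContractionLemma} brings $x$ into $C$ within $20$ more steps, (\ref{ReturnToTheta})$_k$ puts $\theta_{2K_k+i}$ into $\Theta_{k-1}$ for $i\in[0,20]$, and the strong contraction of \cref{CContractionLemma} (valid as $2K_k+20\ll N_0$) keeps $x$ in $C$ through time $2K_k+20$. I expect the hard part to be $(i)_{n+1}$ in Stage 3: the per-excursion expansion $4^{2K_n}$ --- $x$ driven near $1$, then climbing back from near $0$ --- must be weighed honestly against the amortized contraction over the enormous return time to $I_{n+1}$, and the accounting must close with the exact exponent $\rho_{n+1}$ uniformly in $n$, which is precisely what forces the hierarchy $K_m\asymp N_m^{1/4}$, $M_m\asymp N_m^{1/2}$; a close second is the resonance-avoiding pigeonhole behind (\ref{ReturnToTheta})$_{n+1}$, together with the delicate bootstrapping of $(ii)_n,(iii)_n$ through the re-approaches to coarser scales during recovery.
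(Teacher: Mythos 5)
Your proposal follows essentially the same route as the paper: decompose $[0,N]$ by the successive visits $\sigma_1<\cdots<\sigma_r$ to $I_n$, chain the level-$n$ statements $(i)_n$--$(iii)_n$ (together with \cref{GoodReturnBound}, \cref{TwoStepsAfterEntry}, \cref{20IterationsToC}) across each excursion to reach $\Theta_{n-1}\times C$ at time $\sigma_j + 2K_n + 20$, amortize the per-excursion factor $4^{O(K_n)}$ against the contraction $(3/5)^{\rho_n \cdot\ell}$ over the return time $N_n\gg K_n^2$ to shave the exponent from $\rho_n$ to $\rho_{n+1}$, and obtain $(iv)_n$ by splitting into \cref{GoodReturnBound} for $k<n$ versus \cref{BadReturnBound} with $M_C(\K)\le 2K_n-2$ for $k=n$. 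The computations you sketch for the exponent bookkeeping and for the recovery through coarser scales match the paper's.

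Two points where you depart from (or slightly misread) the paper. First, the paper does not keep the position of $I_{n+1}$ within $I_n$ free: it fixes $I_{n+1}$ as the scaled interval of length $(4/5)^{K_n}$ centered at $\alpha$ (equivalently at $\alpha-\omega$, up to the paper's own shift-by-$\omega$ bookkeeping between $I_{n+1}$ and $J_{2K_n}$) and then secures \eqref{ReturnToTheta}$_{n+1}$ by choosing $K_{n+1},M_{n+1}$ inside their allowed ranges, citing the corresponding step in \cite{BjerkSNA}. Your resonance-avoiding repositioning of $I_{n+1}$ is fine as far as this one induction step is concerned, but it would break the downstream results (\cref{DerivativeBoundBeforePeak}, \cref{MinimumDistance}, \cref{MayChooseSquareRoot}) which explicitly use that $I_n$ is centered at $\alpha_c$; the paper's version, adjusting only the integers, is what the rest of the argument needs. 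Second, your remark that the pigeonhole for \eqref{ReturnToTheta}$_{n+1}$ is ``the single point at which the Diophantine hypothesis is invoked'' understates its role: \cref{DiophReturnTime} is used at the start of the proof to get $I_k \cap \bigcup_{0<|m|\le 10M_k}(I_k+m\omega)=\emptyset$, hence $\Theta_n\cap G_n=\emptyset$ and $(I_n-\omega)\cap G_n=\emptyset$, and those two disjointness facts are invoked at essentially every step of the chaining across excursions.
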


\begin{proof}
\Cref{DiophReturnTime} gives minimal return times
\begin{align*}
\begin{cases}
[(\kappa(4/5)^{K_{k-1}})^{1/\tau}] := N_k & k \geq 1 \\
[(2\kappa\lambda^{1/7})^{1/\tau}] := N_0 & k = 0
\end{cases}
\end{align*}
$N_k$ to the respective intervals $I_k$. The constants $M_k, K_k$ have been chosen to be $M_k \approx \sqrt{N_k}, K_k \approx \sqrt{M_k}$. By choosing $\lambda$ sufficiently large, we see that $N_k \gg M_k \gg K_k$.

In particular, \cref{DiophReturnTime} implies that
\begin{align}
I_k \cap \bigcup \limits_{0 < |m| \leq 10M_k} (I_k + m\omega) = \emptyset,
\end{align}
for every $k = 0, 1, \dots, n$. Also, since $3K_i < M_i$,
\begin{align*}
\bigcup \limits_{m = 0}^{3K_i} (I_i + m\omega) \subset \bigcup \limits_{m = -M_i}^{M_i} (I_i + m\omega)
\end{align*}
for every $k = 0, 1, \dots, n$, implying that
\begin{align}
\Theta_n \cap G_n = \emptyset\label{ThetaInterGEmpty},
\end{align}
for $n \geq -1$. Moreover, since $I_n \subset I_k$ ($k = 0, 1, \dots, n-1$), and $(I_k - \omega) \cap \left( \bigcup \limits_{m = 0}^{3K_k} (I_k + m\omega) \right)$ for $k = 0, 1, \dots, n-1$, we get that
\begin{align}
(I_n - \omega) \cap G_{n} = \emptyset\label{OneBeforeBadIsGood}.
\end{align} \\
\textit{Constructing the interval $I_{n+1}$:} \\
Let
\begin{align*}
I_{n+1} = [\alpha - (4/5)^{K_n}/2, \alpha + (4/5)^{K_n}/2].
\end{align*}
We have the inclusion
\begin{align*}
J_{2K_n} = \{ \theta : c(\theta) \geq (\frac32 + \K\frac52)(1 - (4/5)^{2K_n}) \} \subseteq [\alpha - \frac{(4/5)^{K_n}}{\lambda^{1/4}}, \alpha + \frac{(4/5)^{K_n}}{\lambda^{1/4}}] \subseteq I_{n+1}.
\end{align*}

This means, in particular, that by \cref{GoodReturnBound}, as long as $\theta_k \not\in \bigcup \limits_{m = -1}^1 \left( I_{n+1} + m\omega \right)$, we have good control on the contraction. \\ \\
\textit{Choosing the constants $K_{n+1}$, and $M_{n+1}$:} \\
See \cite[Proposition 4.2]{BjerkSNA}, where it is also shown that they satisfy \eqref{ReturnToTheta}$_{n+1}$. \\ \\
\textit{Verifying $(iii)_{n+1}$:} \\
Let $0 \leq s_1 < s_2 < \cdots < s_r = N$ be the return times to $I_n$. If $s_1 = 0$, then by assumption, $x_{s_1} = x_0 \in C$.
If $s_1 > 0$, then the induction hypothesis implies that $x_{s_1} \in C$. If $r = 1$, then we are done.

Suppose instead that we have proved that, for some $1 \leq l < r$ we have $x_{s_l} \in C$.
Since $\theta_{s_l} \in I_n \backslash I_{n+1}$, applying \cref{GoodReturnBound}, we get a $3 \leq t \leq 2K_n - 7$ such that $x_{s_l + t} \in [1/100, 99/100]$.

In the case that $\theta_{s_l + t} \not\in I_0 \cup (I_0 + \omega)$, then by $(ii)_n$, $x_{s_l + t + k} \not\in [1/100, 99/100]$ implies that $\theta_{s_l + t + k} \in G_{n-1}$.
Since, by \cref{ReturnToTheta}, $\theta_{s_l + 2K_n + i} \in \Theta_{n-1}$ ($i = 0, 1, \dots, 20$), which by \cref{ThetaInterGEmpty} is disjoint from $G_{n-1}$, we see that
$x_{s_l + 2K_n} \in [1/100, 99/100]$, and therefore $x_{s_l + 2K_n + 20} \in C$ by \cref{20IterationsToC}.

However, in the case that $\theta_{s_l + t} \in I_0 \cup (I_0 + \omega)$, assume that this $t$ is the smallest such time.
Now, $x_{s_l + t - 1} \not\in [1/100, 99/100]$ by our assumption on $t$, and by \cref{TwoStepsAfterEntry}, $x_{s_l + t + 2} \in [1/100, 99/100]$.
Since $\theta_{s_l + t + 2} \not\in I_0 \cup (I_0 + \omega)$, we may proceed as in the above paragraph to obtain $x_{s_l + 2K_n + 20} \in C$.

In any case, we have $\theta_{s_l + 2K_n + 20} \not\in I_n$, and $x_{s_l + 2K_n + 20} \in C$, and so $(iii)_n$ applies again, to conclude that $x_{s_{l+1}} \in C$.
By induction, we obtain our conclusion. \\ \\
\textit{Verifying $(i)_{n+1}$} \\
We want to prove that, for $N = N(\theta_0; I_{n+1})$,
\begin{align}
\prod \limits^{N-1}_{i = k} |c(\theta_i)p'(x_i)| < (3/5)^{(1/2 + 1/2^{n+2})(N - k)} \quad \text{ for all } k \in [0, N - 1]; \label{IndUpperProd} \\
\prod \limits^{k-1}_{i = 0} |c(\theta_i)p'(x_i)| < (3/5)^{(1/2 + 1/2^{n+2})k} \quad \text{ for all } k \in [1, N]; \label{IndLowerProd} \\
x_k \not\in C \quad \text{ for some } k \in [0, N] \Rightarrow k \in G_{n-1}; \quad \text{ and} \label{IndWhenNotInC} \\
|x_k - y_k| \leq (3/5)^{(1/2 + 1/2^{n+1})k}|x_0 - y_0|, \quad \text{ for all } k \in [1, N]. \label{IndZoomContractionRate}
\end{align}
We will designate, by \eqref{IndUpperProd}[$T$]-\eqref{IndZoomContractionRate}[$T$], the corresponding statements with $N$ replaced by an integer $T > 0$.

Begin by dividing the interval $[0, N]$ into parts
\begin{align*}
0 < s_1 < s_2 < \cdots < s_r = N,
\end{align*}
where the $s_l$ are the times when $\theta_{s_l} \in I_n$ (and $\theta_k \not\in I_n$ for $k \neq s_i$ for any $i$, and $0 \leq k \leq N$).

By the induction hypothesis, \eqref{IndLowerProd}[$s_1$] holds. Hence, if $r = 1$, we are done.
Suppose instead that $r > 1$, and that \eqref{IndLowerProd}[$s_l$] holds for $k \in [1, s_l]$, where $1 \leq l < r$.

Arguing as in the verification of $(iii)_{n+1}$ above, $x_{s_l + 2K_n + 20} \in C$. We already know that $\theta_{s_l + 2K_n + 20} \in \Theta_{n-1}$. Hence

\begin{align}
\prod \limits^{k-1}_{i = s_l + 2K_n + 20} |c(\theta_i)p'(\xi_i)| < (3/5)^{(1/2 + 1/2^{n+1})(k - s_l + 2K_n + 20)}\label{ProductTail}
\end{align}

for $k \in [s_l + 2K_n + 20 + 1, s_{l+1}]$.
Since $|c(\theta)p'(x)| \leq 4 < (5/3)^3$ for every pair $(\theta, x)$, we obtain the following bounds, valid for $k \in [1, 2K_n + 20]$

\begin{align*}
\prod \limits^{s_l + k-1}_{i = s_l} |c(\theta_i)p'(\xi_i)| < (5/3)^{3k}.
\end{align*}

Hence, for $k \in [1, 2K_n + 20]$, we have

\begin{align*}
\prod \limits^{s_l + k-1}_{i = 0} |c(\theta_i)p'(\xi_i)| < (3/5)^{(1/2 + 1/2^{n+1})s_l} \cdot (5/3)^{3k} \leq (3/5)^{(1/2 + 1/2^{n+1})s_l - 3k}.
\end{align*}

If we can show that $(1/2 + 1/2^{n+1})s_l - 3k > (1/2 + 1/2^{n+2})(s_l + k)$, we obtain the inequality, for $k \in [s_l + 1, s_l + 2K_n + 20]$,
\begin{align}
\prod \limits^{k-1}_{i = 0} |c(\theta_i)p'(\xi_i)| < (3/5)^{(1/2 + 1/2^{n+2})(s_l + k)} \label{ProductHead}.
\end{align}
This inequality indeed holds, since $K_n \gg 8 \cdot 2^{n+2}$, for $\lambda$ large enough, and $s_l \geq N_n > K_n^2$, yielding

\begin{align*}
(1/2 + 1/2^{n+1})s_l - 3k - (1/2 + 1/2^{n+2})(s_l + k) > 1/2^{n+2}N_n - 4k \\
> 1/2^{n+2}K_n^2 - 8K_n - 160 = K_n(1/2^{n+2}K_n - 8) - 160 > 0.
\end{align*}

Combining \cref{ProductTail} and \cref{ProductHead}, we obtain, for $k \in [s_l + 1, s_{l+1}]$, that

\begin{align*}
\prod \limits^{k-1}_{i = 0} |c(\theta_i)p'(\xi_i)| < (3/5)^{(1/2 + 1/2^{n+2})k}.
\end{align*}
By induction, \eqref{IndLowerProd}[$N$] holds, as was to be shown. The statement \eqref{IndUpperProd}[$N$] is proved in a similar fashion (the details are in \cite{BjerkSNA}).
The proof of \eqref{WhenNotInC}$_{n+1}$ is contained in \cite{BjerkSNA}.
The verification of \eqref{IndZoomContractionRate}[$N$] is now a quick application of the mean value theorem.\\ \\
\textit{Verifying $(ii)_{n+1}$} \\
As above, we begin by dividing the interval $[0, N]$ into parts
\begin{align*}
0 < s_1 < s_2 < \cdots < s_r = N,
\end{align*}
where the $s_l$ are the times when $\theta_{s_l} \in I_n$.

By the induction hypothesis, the following holds:
\begin{align*}
x_k \not \in [1/100, 99/100] \text{ and } k \in [0, s_1] \Rightarrow \theta_k \in G_{n-1} \subset G_n.
\end{align*}
Suppose that for some $1 \leq l < r$, we have for every $k \in [1, s_l]$ that
\begin{align*}
x_k \not \in [1/100, 99/100] \Rightarrow \theta_k \in G_n.
\end{align*}
Since $(I_n - \omega) \cap G_n = \emptyset$, we see that $x_{s_l - 1} \in [1/100, 99/100]$, and so there is a $3 \leq k \leq 2K_n - 7$ such that $x_{s_l + k} \in [1/100, 99/100]$ by \cref{GoodReturnBound}. Arguing as in the proof of $(iii)_{n+1}$ below, we see that $\theta_{2K_n} \in \Theta_{n-1}$, and $x_{2K_n} \in [1/100, 99/100]$. Hence, by $(ii)_n$, we have
\begin{align*}
x_k \not \in [1/100, 99/100] \text{ and } k \in [2K_n, s_1] \Rightarrow \theta_k \in G_{n-1} \subset G_n.
\end{align*}
Of course, since $\theta_k \in G_n$ for $0 \leq k \leq 3K_n$, we see that
\begin{align*}
x_k \not \in [1/100, 99/100] \text{ and } k \in [0, s_1] \Rightarrow \theta_k \in G_{n-1} \subset G_n.
\end{align*}
By induction, $(ii)_{n+1}$ holds.\\ \\
\textit{Verifying $(iv)_n$:} \\
Suppose that $0 \leq k \leq n$. Since $K \in B_n$, \cref{GoodReturnBound} (if $k < n$) and \cref{BadReturnBound} (if $k = n$) imply that there is a $1 \leq t \leq 2K_k - 2$ such that
\begin{align*}
x_t \in [1/100, 99/100].
\end{align*}
Suppose that this $t$ is the smallest such number. If $\theta_t \not\in I_0 \cup (I_0 + \omega)$, invoking $(ii)_k$, and noting that $\theta_{2K_k + j} \in \Theta_{k-1}$ for $j \in [0, 20]$, and $\Theta_{k-1} \cap G_{k-1} = \emptyset$, we obtain that
\begin{align*}
x_{2K_n} \in [1/100, 99/100];
\end{align*}
using \cref{20IterationsToC}, we see that $x_{2K_k + 20} \in C$, and $\theta_{2K_k + 20} \in \Theta_{k-1}$. \\

If $\theta_t \in I_0 \cup (I_0 + \omega)$, then as in the proof of $(iii)_{n+1}$ above, by \cref{TwoStepsAfterEntry} implies that $x_{t + 2} \in [1/100, 99/100]$. Since $\theta_{t + 2} \not\in I_0 \cup (I_0 + \omega)$, we just refer to the argument in the above paragraph, and conclude that the statement $(iv)_n$ holds true.
\end{proof}

\begin{corr}
By \cref{ZoomInductionZero}, $(i-iii)_0$ hold, where $(iii)_0$ just corresponds to \cref{AlwaysInCStepZero}, and so by \cref{ZoomInductionStep} $(i-iv)_n$ hold for every $n \geq 0$.
\end{corr}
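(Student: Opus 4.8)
The plan is to argue by induction on $n$, with \cref{ZoomInductionZero} furnishing the base case and \cref{ZoomInductionStep} the inductive step; the only thing that really needs checking is that the conclusions of \cref{ZoomInductionZero} are precisely the hypotheses $(i)_0$--$(iii)_0$ required to start \cref{ZoomInductionStep} at $n=0$. More precisely, I would prove: for every $n\ge 0$ there exist closed intervals $I_0\supset\cdots\supset I_n$ and integers $M_0<\cdots<M_n$, $K_0<\cdots<K_n$ obeying \eqref{IndConstants}--\eqref{IndInterval} for which $(i)_n$, $(ii)_n$ and $(iii)_n$ hold, and moreover $(iv)_n$ holds.

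For the base case $n=0$ I would take $I_0$, $M_0$, $K_0$ as defined above. The defining relations \eqref{IndConstants}--\eqref{IndInterval} are essentially empty at this level: the bounds on $|I_k|$, $K_k$, $M_k$ only constrain indices $k=1,\dots,n$, so they say nothing when $n=0$, and the remaining requirement $I_0\supseteq[\alpha-(4/5)^{K_0},\alpha+(4/5)^{K_0}]$ holds for $\alpha\in\mathcal A_0$ by the definitions of $I_0$ and $\mathcal A_0$ once $\lambda$ is large, since $(4/5)^{K_0}$ is super-polynomially small in $\lambda$. Next I would identify the conclusions of \cref{ZoomInductionZero} with $(i)_0$--$(iii)_0$: at $n=0$ the exponent $1/2+1/2^{n+1}$ equals $1$, so \eqref{TailContraction}, \eqref{HeadContraction} and \eqref{ZoomContractionRate} specialise exactly to the inequalities in part (i) of \cref{ZoomInductionZero}; since $G_{-1}=\emptyset$, the implication \eqref{WhenNotInC}$_0$ collapses to ``$x_k\in C$ for all $k\in[0,N]$'', which is \eqref{AlwaysInCStepZero} (so that $(iii)_0$ is nothing but \eqref{AlwaysInCStepZero}), and \eqref{IfBadThenInG}$_0$ collapses to ``$x_k\in[1/100,99/100]$ for all $k\in[0,N]$'', which is \eqref{WhenInGoodStep0}, i.e.\ $(ii)_0$. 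The only piece of $(i)_0$ not already present in \cref{ZoomInductionZero} is the inclusion \eqref{ReturnToTheta}$_0$, which I would read off \cref{DiophReturnTime}: writing $N_0$ for the minimal return time of $I_0$ one has $N_0\gg M_0\gg K_0$ for $\lambda$ large, hence $I_0+m\omega$ is disjoint from $I_0\cup(I_0+\omega)$ for every $m\in\{2K_0,\dots,2K_0+20\}$ and for $m=-M_0$, which is exactly $\bigcup_{k=0}^{20}(I_0+(2K_0+k)\omega)\subseteq\Theta_{-1}$ together with $I_0-M_0\omega\in\Theta_{-1}$.

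The inductive step is then an immediate invocation of \cref{ZoomInductionStep}: from admissible data $I_0\supset\cdots\supset I_n$, $M_0,\dots,M_n$, $K_0,\dots,K_n$ together with $(i)_n$--$(iii)_n$, that proposition produces $I_{n+1}\subset I_n$ and integers $M_{n+1},K_{n+1}$ obeying \eqref{IndConstants}--\eqref{IndInterval} at level $n+1$ with $(i)_{n+1}$--$(iii)_{n+1}$ valid, and it also records that $(iv)_n$ holds under these same hypotheses. Iterating from the base case yields, for every $n\ge 0$, admissible data for which $(i)_n$, $(ii)_n$, $(iii)_n$ and $(iv)_n$ all hold, which is the assertion. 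I do not expect a genuine obstacle, as the two propositions are designed to dovetail; the one point deserving care --- and the sole place a hidden gap could live --- is the $n=0$ bookkeeping above, i.e.\ confirming that the stronger conclusions of \cref{ZoomInductionZero} instantiate $(i)_0$--$(iii)_0$ verbatim once one sets $1/2+1/2^{n+1}=1$, $\Theta_{-1}=\mathbb T\setminus(I_0\cup(I_0+\omega))$ and $G_{-1}=\emptyset$, and that the side condition \eqref{ReturnToTheta}$_0$ is supplied by \cref{DiophReturnTime} and the choice of scales $N_0\gg M_0\gg K_0$.
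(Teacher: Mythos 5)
Your proposal is correct and matches the paper's approach exactly: the corollary is pure bookkeeping, with \cref{ZoomInductionZero} supplying the base case $(i)$--$(iii)_0$ (under the observation that the exponent $1/2+1/2^{n+1}$ becomes $1$ at $n=0$ and that $G_{-1}=\emptyset$ collapses the implications to the unconditional statements of \cref{ZoomInductionZero}) and \cref{ZoomInductionStep} supplying both the inductive step and $(iv)_n$. Your explicit verification that the scale conditions \eqref{IndConstants}--\eqref{IndInterval} are vacuous or satisfied at $n=0$ and that \eqref{ReturnToTheta}$_0$ follows from \cref{DiophReturnTime} together with $N_0\gg M_0\gg K_0$ fills in details the paper leaves implicit.
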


\begin{lemma}\label{LocalControlOnProducts}
Suppose that $0 \leq \K \leq 1$, $x_0 \in C$, $\theta_0 \in \mathbb{T}$, and $0 < N$ satisfies that $\theta_i \not\in I_{m+1}$ for $i = 0, \dots, N$, where $m \geq 0$.
Then the following holds for every $0 \leq j < n \leq N$
\begin{align*}
\prod \limits_{i=j}^{n-1} |c(\theta_i)p(x_i)| \leq 4^{4K_m} \cdot (3/5)^{\left(1 - \frac1{M_0} \right)(n-j)/2}.
\end{align*}
\end{lemma}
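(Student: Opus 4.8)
The plan is to reduce the product over $[j,n-1]$ to the already-established contraction estimates from the induction (Proposition \ref{ZoomInductionStep}), paying a controlled "entry cost" each time the orbit visits one of the intervals $I_0,\dots,I_m$. First I would observe that the hypothesis $\theta_i \notin I_{m+1}$ for all $i \in [0,N]$ means we never get close to the critical scale, so along $[j,n-1]$ the relevant bad set the orbit can hit is only $\bigcup_{k=0}^m (I_k \cup (I_k \pm \omega))$, and between such visits the dynamics is governed by the good control in $(i)_m$ and $(iii)_m$. The idea is to mark the return times $j \le s_1 < s_2 < \cdots < s_q < n$ at which $\theta_{s_l} \in I_m$ (or more generally lies in one of the $I_k$), and on each intervening block use \eqref{TailContraction}/\eqref{HeadContraction} (with exponent $(1/2+1/2^{m+1})\ge 1/2$, which certainly exceeds $(1-1/M_0)/2$ once $\lambda$ is large since $M_0 = [\lambda^{1/(14\tau)}]$) to get a factor $(3/5)^{(1/2+1/2^{m+1})(\text{block length})}$, while on each "recovery" block of length at most $2K_m+20$ immediately following a visit we bound the product crudely by $4^{2K_m+20} < 4^{4K_m}$ using $|c(\theta)p'(x)| \le 4$.

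The key quantitative step is then a counting argument: I need to show that the total number of visits $q$ to $I_m$ within $[j,n-1]$ is small enough — specifically $q \le (n-j)/N_m$ or so by \cref{DiophReturnTime}, since consecutive returns to $I_m$ are separated by at least the Diophantine return time $N_m \gg K_m$ — so that the accumulated recovery cost $4^{(2K_m+20)q}$ is dominated by the loss in the contraction exponent when we weaken $(1/2+1/2^{m+1})$ down to $(1-1/M_0)/2$. Concretely, the contraction over the "good" portion gives at least $(3/5)^{(1/2)(\text{good length})}$ and the good length is at least $(n-j) - (2K_m+20)q$; combining, the exponent of $(3/5)$ is at least $(1/2)\big((n-j)-(2K_m+20)q\big) - (\text{const})(2K_m+20)q \cdot \log_{3/5} 4$, and since $q \lesssim (n-j)/N_m$ with $N_m \gg K_m^2$, the subtracted terms are $o(n-j)$; in fact they are $\le \tfrac{1}{2M_0}(n-j)$ for $\lambda$ large, which is exactly the slack between $1/2$ and $(1-1/M_0)/2$. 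This is where the relations $N_m \approx M_m^2 \approx K_m^4$ and $M_0 \approx \sqrt{N}$ are doing the real work. One should also handle the boundary blocks (the partial block before $s_1$ and the partial block after $s_q$) — for the initial one the induction hypothesis $(i)_m$ applied with starting point $x_j \in C$ (which holds by $(iii)_m$, since $\theta_j \notin I_m$ forces $x_j$ to have returned to $C$, or one simply starts the count at the first return) gives the needed factor; the final partial recovery block after $s_q$ is again absorbed into one extra $4^{2K_m+20}$, still within the budget $4^{4K_m}$ if we are slightly generous, or one tracks it as $4^{4K_m} \cdot 4^{2K_m+20}$ and notes $4^{6K_m+20}$ is still what is claimed after relabelling constants — but to match the stated $4^{4K_m}$ exactly I would instead allot the recovery cost as $4^{2K_m}$ per visit and use $q \ge 1$ plus the observation that there is at most one "pending" recovery at the right endpoint, absorbing it into the single global $4^{4K_m}$ via $4^{2K_m}\cdot 4^{2K_m} = 4^{4K_m}$ when $q=1$ and noting $q\ge 2$ forces $n-j \ge N_m$ so the extra factors are swallowed by the exponential gain.

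The main obstacle I anticipate is the bookkeeping at the two ends of the interval $[j,n-1]$: the lemma asserts a clean bound with the specific constant $4^{4K_m}$, and getting the entry/recovery costs to fit inside that single factor (rather than a factor growing with the number of visits) requires carefully charging each recovery block against the guaranteed $(3/5)$-contraction in the immediately following long good stretch, and arranging that the very last, possibly incomplete, recovery block is the only one left uncompensated. Once the accounting is set up so that every visit except possibly one at the right end is "paid for" locally, the remaining estimate is routine. A secondary, purely technical point is confirming that $(1/2 + 1/2^{m+1}) - (\text{recovery overhead per unit length}) \ge (1 - 1/M_0)/2$ uniformly in $m$; this follows because the overhead per unit length is $\le (2K_m+20)/N_m \cdot \log_{3/5}4 \le 1/(2M_0)$ for $\lambda$ large, using $N_m \gg M_0 K_m$.
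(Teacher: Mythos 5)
Your plan follows the same decomposition-and-charging strategy as the paper's own proof: split the product at return times, bound each recovery block crudely by a power of $4$, and absorb that cost into the contraction from $(i)_m$ on the intervening long stretches, using $N_m \gg M_0 K_m$. (The paper actually tracks a nested sequence of returns $t_1 < t_2 < \cdots$ to intervals $I_{p_1} \supseteq I_{p_2} \supseteq \cdots$ rather than returns to $I_m$ alone, but the bookkeeping is equivalent and your per-visit arithmetic matches the paper's estimate
\begin{align*}
4^{2K_{p_l}+20} \cdot (3/5)^{(t_{l+1}-t_l-2K_{p_l}-20)/2} \leq (3/5)^{(1-1/M_0)(t_{l+1}-t_l)/2}.
\end{align*}
)

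There is one genuine gap, and it is exactly at the boundary you flag as the main obstacle. You assert that the initial block can be handled because $x_j \in C$, which you justify "by $(iii)_m$, since $\theta_j \notin I_m$ forces $x_j$ to have returned to $C$." That inference is false. Statement $(iii)_m$ only guarantees $x_N \in C$ at the \emph{return times} $N = N(\theta_0; I_m)$, starting from $x_0 \in C$ and $\theta_0 \notin I_m$; it says nothing about intermediate times. Immediately after a visit to one of the smaller intervals $I_k$ ($k \le m$), the orbit typically leaves $C$ and stays outside it for up to $2K_k + 20$ iterations, during all of which $\theta \notin I_m$. So $x_j$ may well lie outside $C$; moreover, to invoke $(i)_m$ you would also need $\theta_j \in \Theta_{m-1}$, which again need not hold. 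Your fallback ("start the count at the first return") is the right instinct but, as stated, leaves unestimated the product from $j$ to that first return. The paper closes this by tracking \emph{backwards} from $j$ to the last time $t_0 \le j$ at which simultaneously $x_{t_0} \in C$ and $\theta_{t_0} \in I_0$, and then proving $j < t_0 + 2K_{p_0} + 20$, so that $j$ sits inside a single recovery block of controlled length; the ensuing long stretch to the next return then dominates the $4^{4K_{p_0}}$ cost of that partial block. Without this backward-tracking step, the initial segment is not controlled and the claimed constant $4^{4K_m}$ cannot be guaranteed. Once you add this argument, your global counting $q \lesssim (n-j)/N_m$ and the comparison with the $1/M_0$ slack go through exactly as in the paper.
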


\begin{proof}
Now, we will assume that $0 \leq j < n$.
If $x_j \in C$, let $s$ be the smallest integer satisfying $j \leq s$, and $\theta_s \in I_0$, say $\theta_s \in I_{p} \backslash I_{p + 1}$, where $p \leq m$ by assumption on $n$.
Set $p_1 = p$, and $t_1 = s$.

If $x_j \not\in C$, then let $s$ be the largest integer satisfying that $s \leq j$, $x_{t_i} \in C$, and $\theta_s \in I_0$.
As before, suppose that $\theta_s \in I_{p} \backslash I_{p + 1}$. Set $p_0 = p$, and $t_0 = s$.
Let $t_1$ be the next return time to $I_{p_0}$, say $\theta_{t_1} \in I_{p_1} \backslash I_{p_1 + 1}$.

If there is a next return time, less than $n$, to $I_{p_1}$, call the smallest such time $t_2$.
Suppose that $\theta_{t_2} \in I_{p_2} \backslash I_{p_2 + 1}$, where $p_1 \leq p_2$ by assumption.
Continue this process to get minimum return times $0 < t_1 < t_2 < \cdots < t_r \leq n$ to their corresponding intervals $\theta_{t_i} \in I_{p_i} \backslash I_{p_i}$,
where $0 \leq p_1 \leq p_2 \leq \cdots \leq p_r \leq m$ is an increasing sequence.

Decomposing our product into smaller ones, we obtain
\begin{align*}
\prod \limits_{i=j}^{n-1} |c(\theta_i)p(x_i)| = \left( \prod \limits_{i=j}^{t_1 - 1} |c(\theta_i)p(x_i)| \right) \cdots
\left( \prod \limits_{i=t_l}^{t_{l+1} - 1} |c(\theta_i)p(x_i)| \right) \cdots \left( \prod \limits_{i=t_r}^{n - 1} |c(\theta_i)p(x_i)| \right).
\end{align*}
Write $\theta_0^{(i)} = \theta_{t_i}, x_0^{(i)} = x_{t_i}$. The intermediate products satisfy
\begin{align*}
\prod \limits_{i=t_l}^{t_{l+1} - 1} |c(\theta_i)p(x_i)| &= \left( \prod \limits_{i=t_l}^{t_l + 2K_{p_l} + 19} |c(\theta_i)p(x_i)| \right)
\cdot \left( \prod \limits_{i=t_l + 2K_{p_l} + 20}^{t_{l+1} - 1} |c(\theta_i)p(x_i)| \right) \leq\\
&\leq 4^{2K_{p_l} + 20} \cdot (3/5)^{(t_{l + 1} - (t_l + 2K_{p_l} + 20))/2} \leq (3/5)^{(t_{l + 1} - t_l - 8K_{p_l} - 80)/2} \leq\\
&\leq (3/5)^{\left(1 - \frac1{M_0} \right)(t_{l + 1} - t_l)/2},
\end{align*}
since $\theta^{(l)}_{2K_{p_l} + 20} \in \Theta_{p_l-1}, x^{(l)}_{2K_{p_l} + 20} \in C$, and $t_{l + 1} - t_l \geq N_{p_l} \gg M_{p_l} \cdot K_{p_l} \geq M_0 \cdot K_{p_l}$.
If there are no intermediate products, set them equal to 1. The last product has the upper bound
\begin{align*}
\prod \limits_{i=t_r}^{n - 1} |c(\theta_i)p(x_i)| &\leq 4^{2K_{p_r} + 20} \cdot 4^{K_{p_r} + 10} \cdot (3/5)^{\left(1 - \frac1{M_0} \right)(n - t_r)/2} \leq\\
&\leq 4^{4K_{p_r}} \cdot (3/5)^{\left(1 - \frac1{M_0} \right)(n - t_r)/2},
\end{align*}
where we noted that
\begin{align*}
4^{k/4} \cdot (3/5)^{\left(1 - \frac1{M_0} \right)k/2} \geq (5/3)^{k/2} \cdot (3/5)^{\left(1 - \frac1{M_0} \right)k/2} > 1,
\end{align*}
or
\begin{align*}
4^{K_{p_r} + 10} \cdot (3/5)^{\left(1 - \frac1{M_0} \right)(2K_{p_r} + 20)/2} > 1,
\end{align*}
and that the contracting factor is $(3/5)^{n - 2K_{p_r} - t_r}$ if $n \geq 2K_{p_r} + 20$.

The only product which needs special treatment is the first one, depending on whether $x_j \in C$ or not (the two cases in the first paragraph).
In the case where $x_j \in C$, and $j < t_1$, where $t_1$ is the first return to $I_0$, we obtain
\begin{align*}
\prod \limits_{i=j}^{t_1 - 1} |c(\theta_i)p(x_i)| \leq (3/5)^{(t_1 - j)/2}.
\end{align*}
In the case where $x_j \in C$, and $j = t_1$, this was already treated as an intermediate product, or the last one (depending on whether we returned to $I_0$ between $j$ and $n$).

This gives us that the total product satisfies
\begin{align}\label{ProductBound}
\prod \limits_{i=j}^{n-1} |c(\theta_i)p(x_i)| \leq 4^{4K_m} \cdot (3/5)^{\left(1 - \frac1{M_0} \right)(n - j)/2},
\end{align}
The last case to consider is the one where $x_j \not\in C$, and $t_0 \leq j$ satisfies $x_{t_0} \in C$.
Necessarily, $j < t_0 + 2K_{p_0} + 20$, since $x^{(0)}_{2K_{p_0} + 20} \in C$ (see \cref{QuickReturnFromWorstToGood}),
meaning that the next time $\theta^{(0)}_{2K_{p_0} + 20 + s} \in I_0$ (the smallest such integer $s > 0$), $x^{(0)}_{2K_{p_0} + 20 + s} \in C$.
Therefore, if $t_0 + 2K_{p_0} + 20 \leq j \leq t_0 + 2K_{p_0} + 20 + s$ this would contradict our assumption that $x_j \not\in C$;
whereas $t_0 + 2K_{p_0} + 20 + s < j$ would contradict our initial choice of $t_0$ (the last return to $I_0$, before $j$, such that $x_{t_0} \in C$).

Thus, setting $t = t_0 + 2K_{p_0} + 20$, the first product satisfies
\begin{align*}
\prod \limits_{i=j}^{t_1 - 1} |c(\theta_i)p(x_i)| = \left( \prod \limits_{i=j}^{t - 1} |c(\theta_i)p(x_i)| \right)
\cdot \left( \prod \limits_{i=t}^{t_1 - 1} |c(\theta_i)p(x_i)| \right) \leq 4^{4K_{p_0}} \cdot (3/5)^{\left(1 - \frac1{M_0} \right)(t_1 - j)/2} < 1,
\end{align*}
since $t_1 - t \gg N_{p_0} \gg 2K_{p_0} + 20$. It follows that we get the same bound on the product as in \eqref{ProductBound}.

For the case where $j = 0$, we note that $x_j \in C$, and therefore we have the upper bound 
\begin{align*}
\prod \limits_{i=0}^{2K_m + 20} |c(\theta_i)p(x_i)| \leq 4^{2K_m + 20} \leq 4^{4K_m} \cdot (3/5)^{\left(1 - \frac1{M_0} \right)(2K_m + 20)/2}.
\end{align*}
Taking into account the contraction, as we had analyzed the "constituent products" above, and using the above estimate for the maximum expansion,
we obtain the inequality
\begin{align*}
\prod \limits_{i=0}^{n-1} |c(\theta_i)p(x_i)| \leq 4^{4K_m} \cdot (3/5)^{\left(1 - \frac1{M_0} \right)n/2}.
\end{align*}
\end{proof}

\begin{lemma}\label{LocalControlOnSumsOfProducts}
Suppose that $0 \leq \K \leq 1$, $x_0 \in C$, and $0 < N$ satisfies that $\theta_i \not\in I_{m+1}$ for $i = 0, \dots, N$, where $m \geq 0$.
Then the following holds for every $0 \leq j < n \leq N$
\begin{align*}
\sum \limits_{j = 1} |\partial_\theta c(\theta_{j-1}) p(x_{j-1})| \prod \limits_{i=j}^{n-1} |c(\theta_i)p(x_i)| \leq 4^{4K_m} \cdot \frac{(3/5)^{\left(1 - \frac1{M_0} \right)(n-k)/2}}{1 - }.
\end{align*}
\end{lemma}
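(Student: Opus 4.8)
The plan is to reduce the estimate --- which controls the source terms in the iterated-derivative identity \eqref{IteratedPartialDerivatives} --- directly to \cref{LocalControlOnProducts}. The hypotheses here are exactly those of that lemma (the same $\K$, the same $x_0\in C$, and $\theta_i\notin I_{m+1}$ for $i\le N$), so it applies to each tail product appearing in the sum: for $j<k<n$,
\begin{align*}
\prod_{i=k}^{n-1}|c(\theta_i)p'(x_i)|\le 4^{4K_m}\cdot(3/5)^{\left(1-\frac1{M_0}\right)(n-k)/2}
\end{align*}
(and for $k=n$ the empty product is $1$). Pulling out the common factor $4^{4K_m}$, what remains is a geometric-type sum in which the only quantities still to be estimated are the coefficients $|\partial_\theta c(\theta_{k-1})\,p(x_{k-1})|$; since $0\le p\le\tfrac14$ everywhere, it suffices to control $|\partial_\theta c(\theta_{k-1})|$.

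I would then split the sum according to whether $\theta_{k-1}$ lies in $I_0\cup(I_0+\omega)$. For the indices with $\theta_{k-1}\notin I_0\cup(I_0+\omega)$, \cref{BoundsOnFunctionC}(a) gives $|\partial_\theta c(\theta_{k-1})|<\lambda^{-1/2}$, and summing the geometric series $\sum_{k\le n}(3/5)^{(1-1/M_0)(n-k)/2}$ (over $n-k=0,1,2,\dots$) bounds their contribution by $\tfrac14\lambda^{-1/2}\,4^{4K_m}\bigl(1-(3/5)^{(1-1/M_0)/2}\bigr)^{-1}$. For the remaining indices, where $\theta_{k-1}\in I_0\cup(I_0+\omega)$, only the crude bound $|\partial_\theta c(\theta_{k-1})|\le\K\lambda\le\lambda$ is available (from the explicit form of $c$; cf.\ \cref{BoundsOnFunctionC}); but \cref{DiophReturnTime} applied to $I_0$ shows that two such indices differ by at least $N_0-1$, where $N_0\gg M_0$ is the minimal return time to $I_0$, so along this subsequence the exponents $n-k$ take values at least $0,\,N_0-1,\,2(N_0-1),\dots$, whence their total contribution is at most
\begin{align*}
\tfrac14\lambda\cdot 4^{4K_m}\sum_{l\ge 0}(3/5)^{\left(1-\frac1{M_0}\right)l(N_0-1)/2}=\tfrac14\lambda\cdot 4^{4K_m}\Bigl(1-(3/5)^{\left(1-\frac1{M_0}\right)(N_0-1)/2}\Bigr)^{-1}.
\end{align*}

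Adding the two bounds, the sum is at most a constant (depending only on the contraction ratio $(3/5)^{(1-1/M_0)/2}$) times $\lambda\cdot 4^{4K_m}$; and since $\lambda\le 4^{K_0}\le 4^{K_m}$ once $\lambda$ is large (recall $K_0=[\lambda^{1/(28\tau)}]$), this yields the asserted geometric bound, the polynomial-in-$\lambda$ factor produced near the peaks being absorbed into the exponential prefactor exactly as the $4^{2K+20}$-type expansion factors are absorbed into $4^{4K_m}$ in \cref{LocalControlOnProducts}. The one genuine obstacle is this second group of terms: there $\partial_\theta c$ can be as large as a power of $\lambda$, and the estimate survives only because returns to the peak interval $I_0$ are rare --- which is precisely where the Diophantine hypothesis enters, via \cref{DiophReturnTime} --- so that these few large coefficients are always multiplied by tails that have already contracted by $(3/5)^{(1-1/M_0)(n-k)/2}$ with $n-k$ forced to be a large multiple of $N_0$.
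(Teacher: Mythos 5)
The paper does not actually prove this lemma, and its statement as printed is corrupted (the sum has no upper limit, the right-hand side contains a dangling ``$1-{}$'' and an undeclared index $k$), so there is no paper proof to compare against; what follows is an assessment of your proposal on its own terms. Your overall strategy --- reduce each tail product to \cref{LocalControlOnProducts}, split the coefficient $|\partial_\theta c(\theta_{k-1})|$ according to whether $\theta_{k-1}$ lies near the peaks, use the $\lambda^{-1/2}$ bound of \cref{BoundsOnFunctionC}(a) away from $I_0\cup(I_0+\omega)$, and invoke the Diophantine return time of \cref{DiophReturnTime} to control the rare remaining terms --- is the natural approach and identifies the real obstruction.

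The gap is in the final ``absorption'' step. Your argument produces a bound of order $\lambda\cdot 4^{4K_m}$: the dominant contribution comes from the last visit to the peak region before $n$, where $|\partial_\theta c(\theta_{k-1})|$ may be as large as $\lambda$ while the tail product is close to $4^{4K_m}$ and enjoys essentially no contraction. You claim this extra $\lambda$ is absorbed into the prefactor ``exactly as the $4^{2K+20}$-type expansion factors are absorbed in \cref{LocalControlOnProducts}'', but the mechanism there is different: the intermediate $4^{2K_{p_l}+20}$ factors in that proof are eaten by the long contraction $(3/5)^{(t_{l+1}-t_l)/2}$ over gaps $t_{l+1}-t_l\geq N_{p_l}$ between peak visits, whereas your surviving $\lambda$ multiplies a final product whose exponent $n-k$ may be as small as zero. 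Treating \cref{LocalControlOnProducts} as a black box therefore only gives $\lambda\cdot 4^{4K_m}\leq 4^{5K_m}$ (using $\lambda\leq 4^{K_0}\leq 4^{K_m}$ for $\lambda$ large), not $4^{4K_m}$; to recover the stated constant one must unroll the proof of \cref{LocalControlOnProducts} and exploit the genuine slack $4^{3K_{p_r}+30}\leq 4^{4K_{p_r}}$ in the last tail product, letting $\lambda$ ride on that slack instead. A secondary, more minor, point: since $\theta_{k-1}\in I_0$ forces $\theta_k\in I_0+\omega$, the ``bad'' indices come in adjacent pairs rather than singletons separated by $N_0-1$, so the geometric sum over peak visits should be organised by visits to $I_0$ (each contributing two consecutive indices) rather than by individual indices; this only doubles your constant but should be stated correctly.
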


\section{Proof of the main theorem}
This section has been split into three parts covering existence and smoothness of attractor, minimum distance to repelling set, and growth of derivative, respectively.

We will use the same notation as in \cref{SectionInduction}.
Throughout this section we will assume that $\lambda$ is a fixed constant, and sufficiently large for every result in the previous sections to hold.
From now on, we will also assume that $\alpha = \alpha_c$. Note that $\alpha_c$ depends on $\lambda$.

A notation we will introduce in this section is $I_{n(\K)}$, where $0 \leq \K < 1$, and $n = n(\K)$ is the smallest integer satisfying $\K \in B_{n}$.

\subsection{Existence and regularity of the attractor}

Here we show that, for every $0 \leq \K < 1$, there is an attractor which is the graph of an invariant smooth ($C^\infty$) function $\psi^\K: \mathbb{T} \to (0,1)$,
and that this attractor depends smoothly on $\K$. This is the contents of \cref{SmoothnessOfAttractor}.

In order to accomplish this goal, we will follow a standard argument.
We will first show that there is an invariant space $S_n = \mathbb{T} \times B_n \times [\epsilon_n, 1 - \epsilon_n]$ for every $n \geq 0$,
such that for $(\theta, \K, x) \in S_n$, we have the uniform bound
\begin{align*}
\| \partial_x x_k \| \leq const \cdot \delta^k,
\end{align*}
for some $0 < \delta < 1$, where $\theta_0 = \theta, x_0 = x$.
This will give us a family, for every $n \geq 1$, $\{\psi_{\K, n}: \mathbb{T} \to (0,1)\}_{\K \in B_n}$, of smooth functions for, the graphs of which will be the (unique) attractor corresponding to that $\K$.
As we increase $n$, we will obtain a family $\{\psi^\K: \mathbb{T} \to (0,1)\}$ of smooth functions (attracting graphs) for every $0 \leq \K < 1$.

\begin{lemma}\label{EverythingAligns}
Assume that $\K \in B_n$ (in particular $0 \leq \K < 1$) for some $n \geq 0$.
If $\theta_0 \in \mathbb{T}$, and $x_0 \in (0,1)$, then there is a $0 \leq t$, such that $\theta_t \in \Theta_{n-1}$, and $x_t \in C$.

Moreover, if $x_0 \in (\epsilon, 1 - \epsilon)$, there is a $T_\epsilon \geq 0$ such that $t \leq T_\epsilon$.
In particular, if $\epsilon = 1/100$, we may choose $T_\epsilon \leq 2M_{n-1} + 1$.
\end{lemma}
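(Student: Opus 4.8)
The plan is to track the orbit $(\theta_k, x_k)$ from an arbitrary starting point and show it is eventually forced into the "good" slab $\Theta_{n-1} \times C$ using only the structural lemmas already established, in particular $(i)$--$(iv)_n$ of \cref{ZoomInductionStep}. First I would observe that because $\K \in B_n$, the return-time bound $M_C(\K) \leq 2K_n - 2$ holds, so the "bad return" behaviour near the peaks is controlled on the $n$-th scale: whenever $\theta_0$ enters any of the neighbourhoods $(I_k - \omega) \cup I_k \cup (I_k + \omega)$ for $k \leq n$, item $(iv)_n$ guarantees that within $2K_k + 20 \leq 2K_n + 20$ iterations we land with $\theta_{2K_k + 20} \in \Theta_{k-1} \subseteq \Theta_{n-1}$ and $x_{2K_k + 20} \in C$. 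So the only thing left is to get the orbit into the good $x$-window $[1/100, 99/100]$ (and then into $C$), and to ensure the $\theta$-coordinate is in $\Theta_{n-1}$ when that happens.

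The key steps, in order, are: (1) Use \cref{TimeOfAscent} to handle the case $x_0$ small: if $x_0 < 1/100$, then after $T \leq \log_{5/4}\frac{1}{20 x_0}$ iterations we have $x_T \geq 1/100$ (and automatically $x_T \leq 1/20 < 99/100$), reducing to the case $x_0 \in [1/100, 99/100]$; symmetrically, if $x_0 > 99/100$, one iterate plus \cref{TwoStepsAfterEntry} (or a direct estimate on $p$) brings us into $[1/100, 99/100]$ within two steps. (2) Once $x_0 \in [1/100, 99/100]$, run the orbit forward until $\theta_k$ first hits one of the peak-neighbourhoods. While it does not, \cref{CContractionLemma} (the "$20$ iterations to $C$" clause) drives $x$ into $C$ within $20$ steps as long as $\theta_k \notin I_0 \cup (I_0 + \omega)$; combined with \cref{ZoomInductionStep}$(ii)_n$ and $(iii)_n$, which say that leaving $C$ or leaving $[1/100,99/100]$ forces $\theta_k \in G_{n-1}$ — a set disjoint from $\Theta_{n-1}$ — we see the orbit can only fail to be in $\Theta_{n-1} \times C$ while $\theta_k$ is inside $G_{n-1}$, i.e. recently visited some $I_i$. (3) Invoke $(iv)_n$ at the first such visit to conclude that a bounded number of steps later the orbit is back in $\Theta_{n-1} \times C$. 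Iterating, there is some $t$ with $(\theta_t, x_t) \in \Theta_{n-1} \times C$.

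For the quantitative part, with $x_0 \in (\epsilon, 1-\epsilon)$: \cref{TimeOfAscent} bounds the ascent time by a constant $T_\epsilon'$ depending only on $\epsilon$; after that we are in $[1/100, 99/100]$. From a point with $x_0 \in [1/100, 99/100]$ and $\theta_0 \in \mathbb{T}$, the worst case is that $\theta_0$ is already near a peak, but then $(iv)_n$ gives $\theta_{2K_k + 20} \in \Theta_{n-1}$, $x_{2K_k + 20} \in C$ within $2K_n + 20 \leq$ (a constant) iterations; if $\theta_0$ is away from all peaks, within $20$ steps (by \cref{CContractionLemma}) either we reach $C$ with $\theta$ still in $\Theta_{n-1}$, or we have entered $G_{n-1}$, and then the minimal return structure of \cref{DiophReturnTime} together with the $M_{n-1}$-bound $I_{n-1} - M_{n-1}\omega \in \Theta_{n-1}$ from \eqref{ReturnToTheta} shows we exit $G_{n-1}$ into $\Theta_{n-1}$ within $O(M_{n-1})$ steps. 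Tallying these bounded contributions gives a uniform $T_\epsilon$; the explicit $\epsilon = 1/100$ case, where no ascent phase is needed and the slowest mechanism is a single pass through $G_{n-1}$ governed by the $M_{n-1}$ scale, yields the stated $T_\epsilon \leq 2M_{n-1} + 1$ after checking the constants in \eqref{ReturnToTheta} and \cref{DiophReturnTime}.

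The main obstacle I anticipate is the bookkeeping in step (2)--(3): making sure that when the orbit is expelled from $G_{n-1}$ it genuinely lands in $\Theta_{n-1} \times C$ rather than immediately re-entering another peak neighbourhood $I_i + m\omega$ — this is exactly what the disjointness \eqref{ThetaInterGEmpty}, the inclusion \eqref{ReturnToTheta}, and the Diophantine return time \cref{DiophReturnTime} are designed to rule out, but assembling them into a clean bound (especially pinning down the constant $2M_{n-1} + 1$) requires care.
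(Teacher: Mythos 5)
Your proposal shares the paper's ascent phase (reducing to $x_0 \in [1/100,99/100]$ via \cref{TimeOfAscent}, symmetry of $p$, and \cref{TwoStepsAfterEntry}) and the organizing idea that the orbit is forced into $\Theta_{n-1}\times C$ via the disjointness $\Theta_{n-1}\cap G_{n-1}=\emptyset$ and the structure of $\Theta_{n-1}$. However, there is a concrete error in your step (3). You write that $(iv)_n$ lands the orbit with $\theta_{2K_k+20}\in\Theta_{k-1}\subseteq\Theta_{n-1}$; but the nesting goes the other way. By construction $\Theta_n\subseteq\Theta_{n-1}\subseteq\cdots\subseteq\Theta_{-1}$ (each additional scale \emph{removes} more of $\mathbb{T}$), so for $k\le n$ one has $\Theta_{k-1}\supseteq\Theta_{n-1}$. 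A single invocation of $(iv)_n$ at a coarse scale $k<n$ thus places $\theta$ only in the \emph{larger} set $\Theta_{k-1}$, which does not imply $\theta\in\Theta_{n-1}$. Your "iterating" in step (3) is therefore not as immediate as suggested, and pinning down the claimed bound $2M_{n-1}+1$ along this route would require additional bookkeeping.

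The paper's own argument is more economical and sidesteps $(iv)_n$ entirely. After the preliminary reduction to $\theta_0\notin I_0\cup(I_0+\omega)$ and $x_0\in[1/100,99/100]$, it invokes the contrapositive of $(ii)_n$ (statement \eqref{IfBadThenInG}) together with $\Theta_{n-1}\cap G_{n-1}=\emptyset$: whenever $\theta_t\in\Theta_{n-1}$, we must have $x_t$ in the good window, and one checks $x_t\in C$. The quantitative bound then comes directly from the observation that $\Theta_{n-1}^c$ is a finite union of translates $I_i+m\omega$ with $|m|\le M_i$ and $i\le n-1$, so the orbit cannot stay outside $\Theta_{n-1}$ for more than $2M_{n-1}+1$ consecutive iterations. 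You have most of these ingredients in hand, but the reliance on $(iv)_n$ and the reversed inclusion introduce a genuine gap that the paper's route avoids.
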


\begin{proof}
Since $\frac32 \leq c(\theta) < 4$ for every $\theta \in \mathbb{T}$ when $0 \leq \K < 1$, it follows that $x_k \in (0,1)$ for every $k \geq 0$ ($0 < x_i < 4 p(\frac12) = 1$).

We will first show that there is an $s \geq 0$ such that $x_s \in [1/100, 99/100]$, and $\theta_s \not\in I_0 \cup (I_0 + \omega)$.
Then we will prove the statement from there.

Suppose first that $x_0 \in [1/100, 99/100]$. If $\theta_0 \not\in I_0 \cup (I_0 + \omega)$, we are done.

Assume instead that $\theta_0 \in I_0 \cup (I_0 + \omega)$. If $x_2 \in [1/100, 99/100]$, we are done.
Otherwise, $x_2 \not\in [1/100, 99/100]$, and we fall into one of the cases considered below.

Now, suppose instead that $x_0 \not\in [1/100, 99/100]$. Then there is an $s > 0$ such that $x_s \in [1/100, 99/100]$. Let $s$ be the smallest such integer.
Since $p(1 - x) = p(x)$, we may assume that $x_0 < 99/100$ (discounting the possibility that $x_0 > 99/100$.
By \cref{TimeOfAscent}, there is a uniform upper bound on $s$, say $s \leq S_\epsilon$, if $x_0 \in (\epsilon, 1 - \epsilon)$.

If $\theta_s \not\in I_0 \cup (I_0 + \omega)$, we are done. If instead $\theta_s \in I_0 \cup (I_0 + \omega)$, then since $s$ was the smallest such integer,
$x_{s - 1} \not\in [1/100, 99/100]$, and so by \cref{TwoStepsAfterEntry}, $x_{s + 2} \in [1/100, 99/100]$, and $\theta_{s+2} \not\in I_0 \cup (I_0 + \omega)$.

In any case, there is a (uniformly) bounded $s \leq S_\epsilon + 2$, such that $\theta_s \not\in I_0 \cup (I_0 + \omega), x_s \in [1/100, 99/100]$.

We may thus assume (without loss of generality) that $\theta_0 \not\in I_0 \cup (I_0 + \omega), x_0 \in [1/100, 99/100]$.
Recall that $\Theta_{n-1} \cap G_{n-1} = \empty$ by \cref{ThetaInterGEmpty}. Then \cref{WhenNotInC} implies that, the next time $t \geq 0$ that
$\theta_t \in \Theta_{n-1}$, then $x_t \in C$. Since $\Theta_{n-1} = \mathbb{T} \backslash \bigcup \limits_{i = 0}^{n-1} \bigcup \limits_{m = -M_i}^{M_i} (I_i + m\omega)$,
the maximum number of consecutive iterations spent outside $\Theta_{n-1}$ is $2M_{n-1} + 1$. Thus, setting $T_\epsilon = S_\epsilon + 2M_{n-1} + 3$, the proof is completed.
\end{proof}

\begin{lemma}\label{Contraction}
Let $n \geq 0$ be arbitrary. If $\K \in B_n$, $\theta_0 \in \Theta_{n-1}$, and $x_0, y_0 \in C$, then for each $k > 1$
\begin{align*}
|x_k - y_k| < (3/5)^{k/2} |x_0 - y_0|.
\end{align*}
\end{lemma}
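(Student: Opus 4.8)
The plan is to leverage the main induction from Proposition \ref{ZoomInductionStep} together with the local product estimate in Lemma \ref{LocalControlOnProducts}. Since $\K \in B_n$, part $(iv)_n$ of the induction guarantees that orbits starting in the contracting region recover quickly: after entering any $I_k \cup (I_k\pm\omega)$ they return to $\Theta_{k-1}\times C$ within $2K_k + 20$ steps. The key point is that this means the orbit spends an overwhelming fraction of its time contracting at rate roughly $(3/5)^{1/2}$, and the total multiplicative cost of the ``bad'' excursions near the peaks is bounded independently of how long we iterate. First I would write, by the mean value theorem,
\begin{align*}
|x_k - y_k| = \left( \prod_{i=0}^{k-1} |c(\theta_i) p'(\xi_i)| \right) |x_0 - y_0|,
\end{align*}
where $\xi_i$ lies between $x_i$ and $y_i$; note $|p'(\xi)| = |1 - 2\xi| \le 1$ always, and the relevant bound is really on $\prod |c(\theta_i)p'(\xi_i)|$.

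Next I would observe that, because $\K\in B_n$, once the orbit from $(\theta_0,x_0)$ with $\theta_0\in\Theta_{n-1}$, $x_0\in C$ is iterated it stays ``controlled'' in the sense of $(i)_n$ and $(iv)_n$: it never needs a scale finer than $I_n$, so in particular $\theta_i \notin I_{n+1}$ for all $i$ (the whole point of $B_n$ is that $I_n$ already suffices). This puts us exactly in the hypotheses of Lemma \ref{LocalControlOnProducts} with $m = n$, except that lemma is stated for the products $\prod |c(\theta_i)p(x_i)|$ rather than $\prod |c(\theta_i)p'(\xi_i)|$. The two are controlled by the same argument — decompose $[0,k]$ at the successive return times to $I_0$, bound each ``recovery'' block of length $\le 2K_{p_l}+20$ crudely by $4^{2K_{p_l}+20}$ using $|c(\theta)p'(\xi)| \le 4$, and bound the long stretches in $\Theta_{\cdot-1}\times C$ by $(3/5)^{(1/2+\ldots)(\cdot)}$ using $(i)_n$ and $(ii)$ of Proposition \ref{ZoomInductionZero}/\ref{ZoomInductionStep}. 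Summing, the expansive contributions telescope into a single constant factor $4^{4K_n}$ while the contraction accumulates at rate $(3/5)^{(1-1/M_0)/2}$ per step, exactly as in Lemma \ref{LocalControlOnProducts}. Hence
\begin{align*}
\prod_{i=0}^{k-1} |c(\theta_i)p'(\xi_i)| \le 4^{4K_n} \cdot (3/5)^{(1 - 1/M_0) k / 2}.
\end{align*}

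Finally I would absorb the constant: since $M_0 = [\lambda^{1/(14\tau)}]$ is large and $4^{4K_n}(3/5)^{k/2}$ is decaying, for $\lambda$ large enough we have $4^{4K_n}(3/5)^{-k/(2M_0)} < (5/3)^{k/2}\cdot$(something)$\,<\,$ — more precisely, it suffices to note $(3/5)^{(1-1/M_0)k/2} \le (3/5)^{k/2} \cdot (5/3)^{k/(2M_0)}$ and that $4^{4K_n}(5/3)^{k/(2M_0)} \le (\text{const})\cdot(1 + \epsilon)^k$ with $(3/5)^{1/2}(1+\epsilon) < 1$ — but this still leaves a leading constant rather than the clean bound claimed. \textbf{The main obstacle} is precisely getting rid of the factor $4^{4K_n}$ and the $1/M_0$ loss to land on the exact inequality $|x_k - y_k| < (3/5)^{k/2}|x_0-y_0|$ with no constant: I would handle this by first running the crude estimate to show $x_k \to \psi^\K(\theta_k)$ (so all orbits from $C$ synchronise), then using $\theta_0\in\Theta_{n-1}$ and $(i)_n$ on the \emph{initial} segment up to the first return time $N = N(\theta_0;I_n)$ — where the clean bound $(3/5)^{(1/2+1/2^{n+1})k}|x_0-y_0| < (3/5)^{k/2}|x_0-y_0|$ holds exactly by \eqref{ZoomContractionRate} — and then propagating across each subsequent return block by the same $(iii)/(iv)$ recovery argument, checking that each block of length $L$ contracts by at least $(3/5)^{L/2}$ because the $2K_n+20$ expansive steps (cost $\le 4^{2K_n+20}$) are dwarfed by the following stretch of length $\ge N_n \gg K_n^2$ contracting at rate $(3/5)^{(1/2+1/2^{n+1})}$; the exponent slack $1/2^{n+1}$ per step over $N_n$ steps beats $4^{2K_n+20}$, giving net rate below $(3/5)^{1/2}$ on the block. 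Chaining the blocks yields the lemma.
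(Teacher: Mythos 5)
Your middle route has a real error: membership $\K \in B_n$ constrains only the $x$-recovery time, via $M_C(\K) \le 2K_n - 2$, and says nothing about the $\theta$-orbit. Since $\omega$ is irrational, the rotation orbit $\{\theta_0 + i\omega\}$ enters $I_{n+1}$ infinitely often, so the standing hypothesis of \cref{LocalControlOnProducts} ($\theta_i \notin I_{m+1}$ for all $i$ in range) fails with $m = n$ for all but finitely many $k$. The correct reading of $B_n$ is that even when $\theta$ lands arbitrarily close to $\alpha_c$ (hence inside every $I_j$), the $x$-coordinate climbs back above $1/100$ within $2K_n - 2$ steps, so one never needs the sharper contraction rate $(1/2+1/2^{j+2})$ that zooming to $I_j$ for $j > n$ would buy. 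Also, as you note yourself, even if that lemma applied it would leave a prefactor $4^{4K_n}$ and a rate loss $1/M_0$, neither of which is compatible with the clean conclusion $|x_k - y_k| < (3/5)^{k/2}|x_0 - y_0|$.

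Your final paragraph, though, pivots to essentially the paper's argument, and that fix is correct. Partition $[0,k]$ at the successive return times $0 < s_1 < s_2 < \cdots$ to $I_n$. On $[0, s_1]$, \eqref{ZoomContractionRate} gives the stronger rate $(3/5)^{(1/2 + 1/2^{n+1})k}$; since $s_1 \ge M_n \gg 2^{n+1}\cdot 20 K_n$, this converts to a banked slack $|x_{s_1} - y_{s_1}| < (3/5)^{s_1/2 + 20K_n}|x_0 - y_0|$. On each excursion $[s_l, s_l + 2K_n + 20]$, the crude bound $|c\,p'| < 4 < (5/3)^3$ costs at most $3k$ exponent units and is absorbed by the $20K_n$ reserve, so the intermediate bound $(3/5)^{s_l/2 + k/2}$ still holds there. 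What closes the loop is $(iv)_n$ — and this is precisely where $\K \in B_n$ is used — which gives $\theta_{s_l + 2K_n + 20} \in \Theta_{n-1}$ and $x_{s_l + 2K_n + 20} \in C$, so \eqref{ZoomContractionRate} applies afresh on $[s_l + 2K_n + 20, s_{l+1}]$ and re-earns the slack $(3/5)^{20K_n}$ by the next return time, allowing the induction to repeat. You had all these pieces; the one thing worth making explicit is that the induction hypothesis must be the \emph{augmented} bound $(3/5)^{s_l/2 + 20K_n}$ at each return time $s_l$, not just $(3/5)^{s_l/2}$, since the reserve is consumed and then rebuilt within each block.
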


\begin{proof}
Let $0 < s_1 < s_2 < \cdots$ be the times when $\theta_{s_l} \in I_n$. By \cref{ZoomContractionRate}
\begin{align*}
|x_k - y_k| < (3/5)^{(1/2 + 1/2^{n+1})k} |x_0 - y_0|,
\end{align*}
for $k \in [1, s_1]$. Since $s_1 \geq M_n \gg 20 \cdot 2^{n+1} K_n$ if $\lambda$ is large enough (as in \cref{ZoomInductionStep}), we obtain
\begin{align*}
|x_{s_1} - y_{s_1}| < (3/5)^{s_1/2 + 20K_n} |x_0 - y_0|.
\end{align*}

Suppose that $|x_{s_l} - y_{s_l}| < (3/5)^{s_l/2 + 20K_n} |x_0 - y_0|$ holds for $l \geq 1$.
Since $\K \in B_n$, $(iv)_n$ implies that $\theta_{s_l + 2K_n + 20} \in \Theta_{n-1}$, and $x_{s_l + 2K_n + 20} \in C$.
Recall that $|c(\theta)p'(x)| < 4 < (5/3)^3$ for every $\theta \in \mathbb{T}$ and $x \in [0,1]$. Now, it follows that
\begin{align*}
|x_{s_l + k} - y_{s_l + k}| < 4^k \cdot |x_{s_l} - y_{s_l}| < (5/3)^{3k} \cdot (3/5)^{s_l/2 + 20K_n} \cdot |x_0 - y_0|,
\end{align*}
for $k \in [1, 2K_n + 20]$. Since $k < 2K_n + 20$, and therefore $20K_n - 3k \geq 10K_n \geq k/2$, we get
\begin{align*}
|x_{s_l + k} - y_{s_l + k}| < (3/5)^{s_l/2 + 20K_n - 3k} \cdot |x_0 - y_0| < (3/5)^{s_l/2 + k/2} \cdot |x_0 - y_0|.
\end{align*}
Now, we obtain for $k \in [s_l + 2K_n + 20, s_{l+1}]$ that
\begin{align*}
|x_k - y_k| &< (3/5)^{(1/2 + 1/2^{n+1})(k - s_l + 2K_n + 20)} \cdot |x_{s_l + 2K_n + 20} - y_{s_l + 2K_n + 20}| <\\
&< (3/5)^{(1/2 + 1/2^{n+1})(k - s_l + 2K_n + 20)} \cdot (3/5)^{(s_l + 2K_n + 20)/2} \cdot |x_0 - y_0| =\\
&= (3/5)^{k/2 + 1/2^{n+1}(k - s_l + 2K_n + 20)} |x_0 - y_0|.
\end{align*}
We will now proceed to prove the stronger bound for $k = s_{l+1}$.
We know that $1/2^{n+1}(s_{l+1} - s_l) \geq 1/2^{n+1} N_m \gg 1/2^{n+1} (20 \cdot 2^{n+1} K_n$ (again, see the proof of $(i)_{n+1}$, \cref{ZoomInductionStep})
\begin{align*}
|x_{s_{l+1}} - y_{s_{l+1}}| &< (3/5)^{s_{l+1}/2 + 1/2^{n+1}(s_{l+1} - s_l + 2K_n + 20)} |x_0 - y_0| =\\
&< (3/5)^{s_{l+1}/2 + 20K_n} |x_0 - y_0|
\end{align*}
By induction, the statement follows.
\end{proof}

\begin{lemma}\label{InvariantSubset}
For every $n \geq 0$, there exists an invariant (compact) subset $S_n = \mathbb{T} \times B_n \times [a_n, 1 - a_n]$,
where $0 < a_n \leq 1/4$, such that for $(\theta_0, \K, x_0), (\theta_0, \K, y_0) \in S_n$
\begin{align*}
|x_k - y_k| < c_n \cdot (3/5)^{k/2} |x_0 - y_0|,
\end{align*}
where $c_n > 0$ is a constant depending only on $n$.
\end{lemma}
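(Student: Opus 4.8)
The plan has three parts: fix the half-width $a_n$ so that the box $[a_n, 1-a_n]$ is forward invariant under $\Phi_\K$ for all $\theta\in\mathbb{T}$ and all $\K\in B_n$; then bring any two orbits that start in $S_n$ (over the same $\theta_0,\K$) into a common \emph{good configuration} — $\theta\in\Theta_{n-1}$ with both $x$-coordinates in $C$ — in a number of steps bounded uniformly over $S_n$; and finally read off the geometric decay from \cref{Contraction}, absorbing the bounded transient into the constant $c_n$.

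\emph{Step 1 (the box).} Since $M_C(\K)\to\infty$ as $\K\to 1^-$ (see the formula in \cref{BadReturnBound}), the set $B_n=\{\K : M_C(\K)\le 2K_n-2\}$ is a closed subset of $[0,1)$ bounded away from $1$; set $\K_n^\ast:=\sup B_n<1$, so $B_n$ is compact. Using $c_{\alpha,\K}(\theta)\ge\tfrac32$ and $c_{\alpha,\K}(\theta)\le\tfrac32+\K\tfrac52$, for $a\in(0,\tfrac14]$ and $x\in[a,1-a]$ one has
\begin{align*}
\tfrac32\,a(1-a)\;\le\;c_{\alpha,\K}(\theta)\,p(x)\;\le\;\bigl(\tfrac32+\K_n^\ast\tfrac52\bigr)p(\tfrac12)\;=\;\tfrac38+\K_n^\ast\tfrac58\;<\;1 .
\end{align*}
Hence the choice $a_n:=\min\bigl\{\tfrac14,\ \tfrac58(1-\K_n^\ast)\bigr\}\in(0,\tfrac14]$ makes $\tfrac32 a_n(1-a_n)\ge a_n$ (as $a_n\le\tfrac13$) and $\tfrac38+\K_n^\ast\tfrac58\le 1-a_n$, so $[a_n,1-a_n]$ is forward invariant and $S_n:=\mathbb{T}\times B_n\times[a_n,1-a_n]$ is compact and invariant.

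\emph{Step 2 (common good configuration).} Fix $(\theta_0,\K,x_0),(\theta_0,\K,y_0)\in S_n$; by Step 1 both orbits stay in $[a_n,1-a_n]$. Let $T_n$ be the bound furnished by \cref{EverythingAligns} with $\epsilon$ just below $a_n$. Applying that lemma to the $x$-orbit yields $t_x\le T_n$ with $\theta_{t_x}\in\Theta_{n-1}$ and $x_{t_x}\in C$; applying it again to $(\theta_{t_x},y_{t_x})$ yields $t'\le T_n$ with $\theta_\tau\in\Theta_{n-1}$ and $y_\tau\in C$, where $\tau:=t_x+t'\le 2T_n$. It remains to check that $x_\tau\in C$ as well: from the good configuration $(\theta_{t_x},x_{t_x})\in\Theta_{n-1}\times C$, statement $(iii)_n$ of \cref{ZoomInductionStep} puts the $x$-coordinate back in $C$ at each return of $\theta$ to $I_n$, statement $(iv)_n$ (available because $\K\in B_n$) puts it back in $\Theta_{n-1}\times C$ a bounded number of steps after each such return, and between those instants \eqref{WhenNotInC} together with $\Theta_{n-1}\cap G_{n-1}=\emptyset$ (\cref{ThetaInterGEmpty}) forces $x_s\in C$ whenever $\theta_s\in\Theta_{n-1}$ — so in particular $x_\tau\in C$. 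This bookkeeping is essentially the one already carried out in the verification of $(iii)_{n+1}$ and in the proof of \cref{Contraction}, and I expect it to be the most delicate point: \cref{EverythingAligns} only produces a good time for each orbit individually, so the two orbits must be steered to a common $\Theta_{n-1}$-instant at which both lie in $C$.

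\emph{Step 3 (conclusion).} Now $(\theta_\tau,\K,x_\tau)$ and $(\theta_\tau,\K,y_\tau)$ satisfy the hypotheses of \cref{Contraction}, giving $|x_{\tau+j}-y_{\tau+j}|<(3/5)^{j/2}|x_\tau-y_\tau|$ for all $j\ge 1$. On the other hand $|\partial_x(c_{\alpha,\K}(\theta)p(x))|=|c_{\alpha,\K}(\theta)(1-2x)|\le 4$, so the mean value theorem gives $|x_k-y_k|\le 4^k|x_0-y_0|$ for every $k$, and in particular $|x_\tau-y_\tau|\le 4^{\tau}|x_0-y_0|$. With $c_n:=4^{2T_n}(5/3)^{T_n}$, which depends only on $n$, I would conclude: for $k\le\tau$ the crude bound together with $\tau\le 2T_n$ gives $|x_k-y_k|\le c_n(3/5)^{k/2}|x_0-y_0|$; and for $k>\tau$ the contraction past time $\tau$ combined with $|x_\tau-y_\tau|\le 4^\tau|x_0-y_0|$ gives the same, completing the proof.
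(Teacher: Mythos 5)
Your argument is correct and follows essentially the same strategy as the paper's: an invariance computation pins down the same $a_n$ (the paper's $a_n = 1 - \max_{\K\in B_n}c_\K(\theta)p(1/2) = \tfrac58(1-\K_{\max})$ is exactly your $\tfrac58(1-\K_n^\ast)$, truncated at $1/4$), both orbits are steered to a common instant in $\Theta_{n-1}\times C$, and the geometric rate from \cref{Contraction} is combined with the crude $4^k$ growth bound to absorb the bounded transient into $c_n$. The only structural difference is in the steering step: the paper applies \cref{EverythingAligns} to both orbits from time $0$, obtains $s,t\le T_n$, assumes without loss of generality $s\le t$, and deduces $x_t\in C$ from \eqref{WhenNotInC} together with $\Theta_{n-1}\cap G_{n-1}=\emptyset$; you instead chain two applications of \cref{EverythingAligns} sequentially, giving $\tau\le 2T_n$, and hence a slightly larger but still admissible constant ($4^{2T_n}(5/3)^{T_n}$ versus the paper's $4^{T_n}(5/3)^{T_n/2}$). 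You are right to flag the bootstrap needed when the second good time falls past the first return of $\theta$ to $I_n$: the paper's one-line appeal to \eqref{WhenNotInC} elides this, but the same iteration through $(iii)_n$, $(iv)_n$ and \eqref{WhenNotInC} is implicitly needed there as well, so your explicit treatment of that point is, if anything, more complete than the paper's.
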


\begin{proof}
Suppose that $\K_{\max} < 1$ is the biggest $\K \in B_n$. Let
\begin{align*}
b_n = \max \limits_{\K \in B_n, \theta \in \mathbb{T}} c_\K(\theta)p(1/2) = 1/4 \cdot \left( 3/2 + 5/2\K_{\max} \right) < 1.
\end{align*}
We will show that $a_n = 1 - b_n$ will suffice. Let $\theta_0 \in \mathbb{T}, x_0 \in [a_n, 1 - a_n]$. Note that, if
$x_0 \not\in [1/100, 99/100]$, then, for every $\K \in B_n$,
\begin{align*}
\frac98 a_n \leq \frac32 a_n (1 - a_n) \leq c_\K(\theta_0)p(x_0) = x_1 \leq c_\K(\theta_0) a_n (1 - a_n) \leq 4 \cdot 1/4 \cdot (1 - a_n),
\end{align*}
since $1 - a_n \geq \frac34$. That is, $x_1 \in S_n$. Since this worked for any $\theta_0 \in \mathbb{T}$, this set must be invariant.

For the second part, let $\theta_0 \in \mathbb{T}, x_0, y_0 \in S_n$. According to \cref{EverythingAligns}, there are $s, t \leq T_n$,
such that $\theta_s, \theta_t \in \Theta_{n-1}, x_s, y_t \in C$, where $T_n$ is the same for all these starting values. We may assume without loss of generality
that $s \leq t$. Recall that $\Theta_{n-1} \cap G_{n-1} = \empty$ by \cref{ThetaInterGEmpty}. Since $\theta_s \in \Theta_{n-1}, x_s \in C \subset [1/100, 99/100]$, and $\theta_t \in \Theta_{n-1}$,
\cref{WhenNotInC} implies that $x_t \in C$. Hence $\theta_t \in \Theta_{n-1}$, and $x_t, y_t \in C$. Now,
\begin{align*}
|x_t - y_t| \leq 4^t \cdot |x_0 - y_0|.
\end{align*}
Combining this with \cref{Contraction} yields, for every $k \geq 0$,
\begin{align*}
|x_k - y_k| \leq 4^{T_n} \cdot (5/3)^{T_n/2} \cdot (3/5)^{k/2} |x_0 - y_0|,
\end{align*}
which concludes our proof.
\end{proof}

\begin{corr}\label{NegativeLyapunov}
For every $(\theta_0, \K, x_0) \in S_n$ ($n \geq 0$), and every for every $k > 0$,
\begin{align*}
\left| \frac{\partial x_k}{\partial x_0} \right| < c_n \cdot (3/5)^{k/2},
\end{align*}
for some constant $c_n$ depending only on $n$.
\end{corr}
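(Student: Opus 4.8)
The plan is to derive this as an essentially immediate consequence of \cref{InvariantSubset}. The point is that $\partial x_k / \partial x_0$ is exactly the infinitesimal version of the Lipschitz-type estimate $|x_k - y_k| < c_n (3/5)^{k/2}|x_0 - y_0|$ proved there. First I would recall that, for $(\theta_0, \K, x_0) \in S_n$, the fiber $\{\theta_0\}\times\{\K\}\times[a_n, 1-a_n]$ is mapped into itself under iteration (invariance of $S_n$), so that for any $x_0, y_0$ in this fiber the bound from \cref{InvariantSubset} applies with the same constant $c_n$. Fixing $x_0$ in the interior $(a_n, 1-a_n)$ and letting $y_0 \to x_0$, the difference quotient $\frac{x_k - y_k}{x_0 - y_0}$ is bounded in absolute value by $c_n (3/5)^{k/2}$ uniformly, hence so is its limit $\frac{\partial x_k}{\partial x_0}$, provided this derivative exists.

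The one genuine point to address is differentiability of $x_k$ with respect to $x_0$, so that the limit of difference quotients is actually the partial derivative. This is routine: $x_k$ is obtained from $x_0$ by $k$-fold composition of the smooth maps $x \mapsto c(\theta_j) p(x)$, so $x_k$ is a $C^\infty$ function of $x_0$ (with $\theta_0, \K$ held fixed), and the chain rule gives
\begin{align*}
\frac{\partial x_k}{\partial x_0} = \prod_{j=0}^{k-1} c(\theta_j) p'(x_j).
\end{align*}
Thus the derivative exists everywhere, and the mean value theorem applied to $x \mapsto x_k(x)$ on the fiber shows that for any $x_0$ there are nearby $y_0$ with $\left|\frac{\partial x_k}{\partial x_0}(x_0)\right| = \left|\frac{x_k(x_0) - x_k(y_0)}{x_0 - y_0}\right|$ up to an arbitrarily small error, which is $\le c_n(3/5)^{k/2}$ by \cref{InvariantSubset}. (For the endpoints $x_0 = a_n$ or $x_0 = 1-a_n$ one takes one-sided quotients, or simply notes that $S_n$ can be enlarged slightly so these points become interior.) This yields the claimed bound with the same constant $c_n$.

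I do not expect any real obstacle here; the only thing to be careful about is that the constant $c_n$ and the invariant set $S_n$ are exactly those furnished by \cref{InvariantSubset}, so nothing new has to be constructed. An alternative, equally short route would bypass \cref{InvariantSubset} and bound the product $\prod_{j=0}^{k-1}|c(\theta_j)p'(x_j)|$ directly using \cref{LocalControlOnProducts} together with \cref{EverythingAligns} (to first move the orbit into $\Theta_{n-1}\times C$ at a uniformly bounded time, absorbing the finite initial expansion into the constant), but invoking \cref{InvariantSubset} is cleaner since that lemma has already packaged precisely this work.
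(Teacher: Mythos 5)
Your proposal is correct and matches the paper's own proof almost verbatim: both invoke \cref{InvariantSubset} and pass to the limit of difference quotients $\frac{x_k(x_0+h)-x_k(x_0)}{h}$ to transfer the Lipschitz-type bound $|x_k-y_k|<c_n(3/5)^{k/2}|x_0-y_0|$ to the partial derivative. Your additional remarks about smoothness via the chain rule and about the boundary of the interval are harmless elaborations of a step the paper leaves implicit.
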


\begin{proof}
Choose $x_0$ in the interior of $A_\K$. We have for small enough $|h| > 0$ that $x_0 + h, x_0 \in A_\K$. Considering $x_k(x_0)$ as a function of $x_0$, we have
\begin{align*}
\left| \frac{\partial x_k}{\partial x_0} \right| &= \left| \lim \limits_{h \to 0} \frac{x_k(x_0 + h) - x_k(x_0)}{h} \right| \\
&< \lim \limits_{h \to 0} \frac{c_n \cdot (3/5)^{k/2}|x_0 + h - x_0|}{|h|} \\
&= c_n \cdot (3/5)^{k/2}.
\end{align*}
\end{proof}

\begin{prop}\label{SmoothnessOfAttractor}
There is an invariant curve, the graph of a function $\psi^\K(\theta)$ which is smooth smooth ($C^\infty$) in both $\K$ and $\theta$.
This curve attracts the orbits of every point $(\theta, x) \in \mathbb{T} \times (0,1)$.
\end{prop}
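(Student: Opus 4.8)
\emph{Plan.} This is the usual graph-transform construction followed by a bootstrap for regularity, run uniformly in $\K$ on each set $B_n$; it is the exact analogue, with the parameter $\K$ included, of the corresponding argument in \cite{BjerkSNA}.

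\textbf{Existence, uniqueness, attraction.} Fix $n\ge 0$ and $\K\in B_n$ and work inside the invariant set $S_n=\mathbb{T}\times B_n\times[a_n,1-a_n]$ of \cref{InvariantSubset}. Let $X$ be the (complete) space of continuous maps $\phi\colon\mathbb{T}\to[a_n,1-a_n]$ with the sup metric, and define the graph transform $(\mathcal{T}\phi)(\theta)=c_{\alpha_c,\K}(\theta-\omega)\,p(\phi(\theta-\omega))$; by invariance of $S_n$ in the $x$-coordinate, $\mathcal{T}\colon X\to X$. For $\phi_1,\phi_2\in X$ one has $(\mathcal{T}^k\phi_1)(\theta)-(\mathcal{T}^k\phi_2)(\theta)=x_k-y_k$ where $(\theta_0,x_0)=(\theta-k\omega,\phi_1(\theta-k\omega))$ and $y_0=\phi_2(\theta-k\omega)$, so \cref{InvariantSubset} gives $\|\mathcal{T}^k\phi_1-\mathcal{T}^k\phi_2\|\le c_n(3/5)^{k/2}\|\phi_1-\phi_2\|$; hence $\mathcal{T}^k$ is a contraction for $k$ large, and $\mathcal{T}$ has a unique fixed point $\psi^\K\in X$, which is continuous and satisfies the invariance relation $\psi^\K(\theta+\omega)=c(\theta)p(\psi^\K(\theta))$. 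For attraction, let $(\theta,x)\in\mathbb{T}\times(0,1)$. As in the proof of \cref{EverythingAligns}, after a uniformly bounded initial transient the orbit of $(\theta,x)$ enters $[1/100,99/100]$ with $\theta$-coordinate outside $I_0\cup(I_0+\omega)$, and from then on every return of $\theta$ to $\Theta_{n-1}$ is accompanied by $x_m\in C$ (via \eqref{IfBadThenInG}, \eqref{WhenNotInC}); the return times of $\theta$ to $\Theta_{n-1}$ are the same for the orbit of $(\theta,\psi^\K(\theta))$, so one may pick a common time $m_0$ with $\theta_{m_0}\in\Theta_{n-1}$, $x_{m_0}\in C$ and $\psi^\K(\theta_{m_0})\in C$. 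Then \cref{Contraction} gives $|x_k-\psi^\K(\theta_k)|<(3/5)^{(k-m_0)/2}|x_{m_0}-\psi^\K(\theta_{m_0})|\to 0$, so the orbit of $(\theta,x)$ converges to the graph of $\psi^\K$; in particular $\psi^\K$ is the unique continuous invariant curve with values in $(0,1)$, so the curves obtained for different $n$ agree on $B_n\cap B_{n'}$ and, since $\bigcup_n B_n=[0,1)$, glue to a single $\psi^\K$ defined for all $\K\in[0,1)$.

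\textbf{Regularity.} Take a smooth initial guess $\phi_0\equiv\tfrac13\in X$ and set $\phi_k=\mathcal{T}^k\phi_0$; each $\phi_k$ is $C^\infty$ and $\phi_k\to\psi^\K$ uniformly. Its partial derivatives $\partial_\theta\phi_k(\theta)$ and $\partial_\K\phi_k(\theta)$ are given along the backward orbit of $\theta$ by the telescoped formula \eqref{IteratedPartialDerivatives} (with $\partial=\partial_\theta$ or $\partial=\partial_\K$). Bounding the factors $|\partial_\theta c|,|\partial_\K c|$ by $1/\sqrt\lambda$ away from the peaks (\cref{BoundsOnFunctionC}) and the tail products $\prod_{j=k}^m|c(\theta_j)p'(x_j)|$ by the geometrically decaying quantity of \cref{LocalControlOnProducts}, and summing via \cref{LocalControlOnSumsOfProducts}, one gets a bound $\|\partial_\theta\phi_k\|,\|\partial_\K\phi_k\|\le C$ independent of $k$, and that these sequences are Cauchy in sup-norm (the difference of consecutive iterates is controlled by the same decaying products). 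Hence $\psi^\K\in C^1$ jointly in $(\theta,\K)$ on $\mathbb{T}\times B_n$, with $\partial_\theta\psi^\K=\lim_k\partial_\theta\phi_k$, etc. For higher orders, differentiate the invariance equation $j$ times: $\psi^{\K,(j)}$ solves a linear twisted cohomological equation $u(\theta+\omega)=c(\theta)p'(\psi^\K(\theta))\,u(\theta)+R_j(\theta)$, where $R_j$ is a polynomial in $c,\dots,c^{(j)}$ and $\psi^\K,\dots,\psi^{\K,(j-1)}$; since the multiplier $\prod|c\,p'(\psi^\K)|$ decays geometrically along orbits by \cref{LocalControlOnProducts} (with $x_i=\psi^\K(\theta_i)$), this equation has a unique bounded solution given by a convergent series, which must be $\psi^{\K,(j)}$. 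Induction on $j$, together with the same estimates applied to $\partial_\K$ and to mixed partials, gives $\psi^\K\in C^\infty$ in $(\theta,\K)$ on $\mathbb{T}\times[0,1)$.

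\textbf{Main obstacle.} The delicate point throughout is the uniform control of the higher-order derivatives: the sum in \eqref{IteratedPartialDerivatives} contains terms $\partial\theta_{k-1}\,p(x_{k-1})\prod_{j=k}^m c(\theta_j)p'(x_j)$ in which $\partial_\theta c(\theta_{k-1})$ can be as large as $\K\lambda$ when $\theta_{k-1}$ is near a peak (\cref{CDerivativeAroundSecondPeak}), so the decay of the subsequent product must overcome this growth. This is exactly what \cref{LocalControlOnProducts} and \cref{LocalControlOnSumsOfProducts} are built for, combined with the fact, supplied by the induction ($(iv)_n$ of \cref{ZoomInductionStep}), that after visiting $I_k$ the orbit returns to $\Theta_{k-1}\times C$ within $O(K_k)$ steps whereas the next visit to $I_k$ is $O(N_k)\gg O(K_k)$ steps away. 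For the $j$-th derivative one must run this bookkeeping over $j$-fold products of $\partial c$ factors at distinct peak times, which is the heaviest part of the argument; it is carried out in detail for $\K=1$ in \cite{BjerkSNA}, and the present case is identical, performed uniformly over $\K\in B_n$.
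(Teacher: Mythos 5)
Your proof takes a genuinely different route from the paper's. The paper's proof is very short: it verifies the uniform fiberwise contraction $|D_x x_k| < c_n(3/5)^{k/4}$ on $S_n$ (\cref{NegativeLyapunov}), invokes Theorem~2.1 of \cite{StarkRegularityQPF} to get a continuous attracting invariant graph, invokes Theorem~3.1 of the same reference to get $C^\infty$ regularity in one stroke, and then glues over $\bigcup_n B_n = [0,1)$ exactly as you do. You instead re-derive Stark's machinery by hand: a graph-transform contraction for existence and uniqueness, \cref{EverythingAligns} together with \cref{Contraction} for attraction, and a derivative bootstrap via \eqref{IteratedPartialDerivatives} and a twisted cohomological equation for regularity. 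The existence, uniqueness, attraction, and gluing part of your argument is sound and essentially reproduces the content of Stark's Theorem~2.1 in this setting.

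The regularity part is where the paper's citation buys a lot, and where your sketch leaves real work undone. For the $C^1$ bound you invoke \cref{LocalControlOnProducts}, but that lemma only controls products on a window where $\theta_i\notin I_{m+1}$; for the full orbit (which returns to $I_{m+1}$ infinitely often) you must splice these windows together using the $(iv)_n$ quick-return structure, and you also rely on \cref{LocalControlOnSumsOfProducts}, whose statement in the paper is incomplete and which carries no proof. For $C^j$, the inhomogeneous term $R_j$ in your cohomological equation contains $c^{(j)}(\theta)$, whose magnitude near the peaks grows like a positive power of $\lambda$ increasing with $j$; none of the lemmas you cite give bounds on higher derivatives of $c$, so this step is asserted rather than proved. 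Stark's Theorem~3.1 avoids all of this by exploiting that the base rotation has zero expansion, so any uniform fiberwise contraction already gives $C^r$ regularity for every $r$; your version would have to re-establish exactly that fact. Finally, the appeal at the end to \cite{BjerkSNA} for "the $\K=1$ case carried out in detail" does not help you here: at $\K=1$ the attractor is strange, not smooth, so that reference does not contain the higher-order-derivative bookkeeping you need. Your route is workable in principle, but it trades a clean citation for a substantial amount of additional estimation that is not actually carried out in the proposal.
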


\begin{proof}
We will use the results in \cite{StarkRegularityQPF}. In his notation, for a fixed $n \geq 0$, $(\theta, \K) \in X = \mathbb{T} \times B_n$ and $x \in Y = [a_n, 1 - a_n]$
(where $a_n$ is as in \cref{InvariantSubset}). Now, by \cref{NegativeLyapunov}
\begin{align*}
\left| D_x x_k \right| < c_n \cdot (3/5)^{k/4}
\end{align*}
for every $(\theta_0, \K, x_0) \in S_n = X \times Y$.

Applying \cite[Theorem 2.1]{StarkRegularityQPF}, we obtain continuous invariant graphs $\{\psi^\K_n: \mathbb{T} \to (0,1)\}$ for each $\K \in B_n$,
attracting all of $\mathbb{T} \times (0,1)$, by \cref{EverythingAligns}.

Now, \cite[Theorem 3.1]{StarkRegularityQPF} implies that each $\psi^\K_n$ is as smooth as $\Phi_{\alpha_c, \K}$, that is $C^\infty$.

If $\K \in B_n$, then $\K \in B_m$ and $\psi^\K_m = \psi^\K_n$ for every $m \geq n$, since the attractor is unique. We also recall that $\bigcup \limits_{n = 0}^\infty B_n = [0,1)$.
Therefore, we obtain for every $0 \leq \K < 1$ a $C^\infty$ map
\begin{align*}
\psi^\K: \mathbb{T} \to (0,1),
\end{align*}
the graph of which attracts $\mathbb{T} \times (0,1)$.
\end{proof}

\subsection{Asymptotic minimal distance between attractor and repeller}

Here, we show that, when $\K \in B_n$, then the curve $\psi^\K$ will be essentially flat in the step before the first peek,
i.e. that $\partial_\theta \psi^\K(I_n)$ is very small, and furthermore, it will be located in $C$.
This will then give us very good bounds on $\partial_\theta \psi^\K(I_n + \omega)$, which will be very close to $\partial_\theta c(I_n)$.
That is $\psi^\K(I_n + \omega)$ will almost look like $c$ does slightly to the left of the peak at $\theta = 0$, that is, sharply increasing.

The next part is to show that the value of $\psi^\K(\alpha_c)$ is almost $1/2$, meaning that $\psi^\K(\alpha_c + \omega) \approx c(\alpha_c)p(1/2)$ is close to the "potential maximum".
For $\theta \in I_n + \omega$ not very close to $\alpha_c$,
the sharp nature of the peak at $\alpha_c$ will mean that $\psi^\K(\theta + \omega)$ can't reach as high as $\psi^\K(\alpha_c + \omega)$.
This will then give us the asymptotic behaviour of the minimum distance we described.

The main results here are \cref{DerivativeBoundBeforePeak,MinimumDistance}.

\begin{lemma}\label{SmallIteratedDerivatives}
If $\theta_0 \in \Theta_{n-1}$, and $x_0 = x \in C$, then
\begin{align*}
\left|(\partial_\theta c(\theta_{N-1})) \cdot p(x_{N-1}) + \sum \limits_{j = 1}^{N-1} (\partial_\theta c(\theta_{j-1})) \cdot p(x_{j-1}) \prod \limits_{i=j}^{N-1} c(\theta_i) \cdot p'(x_i)\right| < \lambda^{1/4},
\end{align*}
where $N = N(\theta_0; I_n)$, and $\partial$ is either $\partial_\K$ or $\partial_\theta$.
\end{lemma}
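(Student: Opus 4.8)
The plan is to bound the sum in blocks determined by the return times to $I_0$, exactly in the style of \cref{LocalControlOnProducts} and the verification of $(i)_{n+1}$. The quantity to estimate is precisely the first and third summands appearing in \cref{IteratedPartialDerivatives} (with $\partial x_0 = 0$), so this lemma says that the "forcing" contributions to $\partial_\theta x_N$ (or $\partial_\K x_N$) stay controlled up until the orbit reaches the zooming scale $I_n$. The key input is that $|\partial_\theta c(\theta_j)|, |\partial_\K c(\theta_j)| < 1/\sqrt{\lambda}$ whenever $\theta_j \not\in I_0 \cup (I_0 + \omega)$ (\cref{CDerivativeOutsideI0}(a)), while inside $I_0 \cup (I_0+\omega)$ we only have the crude bound $|\partial_\theta c| \le \K\lambda$ (\cref{CDerivativeAroundSecondPeak}(c)) and $|\partial_\K c| \le \frac52$; but such "bad" times are rare and each is followed by a long stretch of contraction before the next visit to $I_0$, by \cref{DiophReturnTime}.

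First I would split $[0, N-1]$ at the return times $0 \le t_1 < t_2 < \cdots < t_r < N$ to $I_0$ (together with the at most two indices $t_i, t_i+1$ where $\theta$ sits in $I_0 \cup (I_0+\omega)$), and at the indices in $G_{n-1}$ where $x_j$ may leave $C$. For each index $j$ with $\theta_{j-1} \not\in I_0 \cup (I_0+\omega)$ and $x_i \in C$ on the relevant tail, the product $\prod_{i=j}^{N-1} |c(\theta_i)p'(x_i)|$ decays geometrically at rate $(3/5)^{(1/2+1/2^{n+1})(N-j)}$ by \eqref{TailContraction}, and $|\partial c(\theta_{j-1})| p(x_{j-1}) < 1/\sqrt\lambda$, so these terms sum to at most a constant times $\lambda^{-1/2}\cdot\frac{1}{1-(3/5)^{1/2}}$, which is $\ll \lambda^{1/4}$. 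For the $O(K_0)$-many indices $j$ lying in a recovery window $G_{n-1}$ after a visit to $I_0$ (where $x$ may be outside $C$ and the tail product is only bounded by $4^{4K_m}$ as in \cref{LocalControlOnProducts}), I would absorb the at most $4^{4K_m} \le 4^{4K_0}$-type expansion factor against the geometric decay $(3/5)^{(1-1/M_0)(N-j)/2}$ coming from \cref{LocalControlOnProducts}; since between consecutive visits to $I_0$ the gap is $\ge N_0 \gg K_0^2$, the accumulated windows contribute a convergent geometric series whose sum is bounded by $4^{O(K_0)}$ times a constant. Finally, for the handful of indices $j$ with $\theta_{j-1} \in I_0 \cup (I_0+\omega)$, the bad factor $|\partial c(\theta_{j-1})|p(x_{j-1}) \le \K\lambda \cdot \frac14$ is multiplied by a tail product that, by \cref{LocalControlOnProducts} again (applied from $j$ onward, noting $\theta_i \not\in I_{m+1}$ there), is at most $4^{4K_m}(3/5)^{(1-1/M_0)(N-j)/2}$, and since the next such bad index is at least $N_0 \gg (\log\lambda)^{10}$ steps away, the factor $\lambda \cdot 4^{4K_0} \cdot (3/5)^{(1-1/M_0)N_0/2}$ is far smaller than $1$ for $\lambda$ large; summing over the $\le N/N_0$ bad indices still gives something $\ll \lambda^{1/4}$.

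Putting the three types of contribution together and using $4^{O(K_0)} \cdot \lambda \cdot (3/5)^{c N_0} \to 0$ (because $N_0 \approx (2\kappa\lambda^{1/7})^{1/\tau}$ dominates $K_0 \approx \lambda^{1/(28\tau)}$ and $\log\lambda$), the total is bounded by a constant independent of $N$, hence in particular by $\lambda^{1/4}$ once $\lambda$ is large enough. The argument is uniform in $N$ because each successive block starting from a return to $I_0$ begins with $x \in C$ (by \eqref{InCWhenReturnToBad}, i.e.\ $(iii)_n$) so the per-block estimates do not degrade. The main obstacle is the bookkeeping near the visits to $I_0 \cup (I_0+\omega)$: one must make sure that the large derivative factor $\K\lambda$ there is genuinely swamped by the contraction accumulated over the (Diophantine-guaranteed) long return time, and that the recovery windows $G_{n-1}$, where control on $x \in C$ is temporarily lost, only cost a fixed $4^{O(K_0)}$ factor rather than something growing with $n$ or $N$ — this is exactly where \cref{LocalControlOnProducts} and the separation estimates \eqref{ThetaInterGEmpty}, \eqref{ReturnToTheta} do the heavy lifting.
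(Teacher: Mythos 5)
Your plan is workable, but it is a long detour compared to the paper's proof, which simply verifies the two hypotheses of \cref{DerivativeBounds} with $T=N-1$ and applies it. Under the present assumptions $\theta_0 \in \Theta_{n-1}$, $x_0 \in C$, $N = N(\theta_0; I_n)$, the estimate \eqref{TailContraction} of $(i)_n$ (taking $y_0=x_0$) already gives $\prod_{i=k}^{N-1}|c(\theta_i)p'(x_i)| < (3/5)^{(1/2+2^{-(n+1)})(N-k)} < (3/5)^{(N-k)/2}$ for \emph{every} $k \in [0,N-1]$, with no restriction to stretches where $x$ remains in $C$; the scale-by-scale block bookkeeping you set up via \cref{LocalControlOnProducts} is therefore not needed at all. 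The second hypothesis of \cref{DerivativeBounds} follows because $\theta_N \in I_n \subset I_0$ forces the last visit to $I_0\cup(I_0+\omega)$ before time $N$ to occur at least $N_0 \gg 10\log\lambda$ steps earlier, so $|\partial_\theta c(\theta_k)|, |\partial_\K c(\theta_k)| < 1/\sqrt\lambda$ throughout $[N-1-10\log\lambda,\,N-1]$ by \cref{CDerivativeOutsideI0}(a); the lemma then returns the sharper bound $\lambda^{-1/4}$ rather than the $\lambda^{1/4}$ actually claimed. In effect your block decomposition re-proves \cref{DerivativeBounds} and much of \cref{LocalControlOnProducts} from scratch. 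If you do pursue that route, note two things: the inequality $4^{4K_m}\le 4^{4K_0}$ is backwards ($K_m\ge K_0$ and the $K_m$ grow superexponentially), so the expansion prefactor $4^{4K_m}$ at scale $m$ must be beaten by the return time $N_m$ of the \emph{same} scale, not by $N_0$; and the recovery-window contribution of size $4^{O(K_0)}$ is by itself far larger than $\lambda^{1/4}$, so it only survives once it is multiplied against the $(3/5)^{cN_0}$ decay, as you invoke only at the very end.
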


\begin{proof}
Note that the assumption that $\partial_\theta x_0 = 0$, is equivalent to
\begin{align*}
|\partial_\theta x_N| = \left|(\partial_\theta c(\theta_{N-1})) \cdot p(x_{N-1}) + \sum \limits_{j = 1}^{N-1} (\partial_\theta c(\theta_{j-1})) \cdot p(x_{j-1}) \prod \limits_{i=j}^{N-1} c(\theta_i) \cdot p'(x_i)\right|,
\end{align*}
since then $\partial_\theta x_0 \prod \limits_{i=j}^{N-1} c(\theta_i) \cdot p'(x_i)$ is removed from the expression.

Let $s < N$ be the smallest integer such that $\theta_i \not\in I_0 \cup (I_0 + \omega)$ for $s \leq i \leq N$ (that is $\theta_i$ won't return to $I_0$ before $i = N$). Since
\begin{align*}
\theta_0 \in \Theta_{n-1} = \mathbb{T} \backslash \bigcup \limits_{i=0}^{n-1} \bigcup \limits_{k = -M_i}^{M_i} I_0 + k\omega,
\end{align*}
and also $N(\theta; I_0) \geq M_0$ for $\theta \in I_0$, we deduce that at least $s \geq M_0$.

Recall that $M_0 \gg K_0 = \lambda^{1/28}$, and so $K_0 \gg 10 \log \lambda$ if $\lambda$ is large.
Thus, for every $s \leq k \leq N$, $\theta_k \not\in I_0 \cup (I_0 + \omega)$, and $|\partial_\theta c(\theta_k)|, |\partial_\K c(\theta_k)| < \frac1{\sqrt{\lambda}}$ (see \cref{CDerivativeOutsideI0}),
and also $\prod \limits_{j = k}^{N-1} |c(\theta_j)p(x_j)| < (3/5)^{(N - k)/2}$ (see \cref{TailContractionBaseInd}).

Applying \cref{DerivativeBounds} for $T = N-1$, assuming $\partial_\theta x_0 = 0$, we obtain that $|\partial_\K x_N|, |\partial_\K y_N| \leq \lambda^{-1/4}$, which is what we wanted to show.
\end{proof}

Let $0 \leq \K < 1$ be fixed. For each given $(\theta_0, x_0) \in I_0 \times C$, set $T(\theta_0, x_0)$ equal to the smallest positive integer $T > 2$ such that
\begin{align*}
x_T \geq \frac1{100}.
\end{align*}
Set $T(\theta) = T(\theta, \psi^\K(\theta))$.

\begin{lemma}\label{DerivativeBoundBeforePeak}
Suppose that $0 \leq \K < 1$, and let $J = J(\K)$ be an interval such that
\begin{align*}
I_{m+1} \subseteq J \subseteq I_m,
\end{align*}
for some $1 \leq m$, satisfying that, for every $\theta_0 \in J$,
\begin{align*}
T(\theta_0) \leq (N_m)^{3/4},
\end{align*}
where
\begin{align*}
N_m = \min \limits_{\theta \in (I_m + \omega)} N(\theta; I_m).
\end{align*}
Then
\begin{align*}
|\partial_\theta \psi^\K(\theta)|, |\partial_\K \psi^\K(\theta)| \leq \lambda^{-1/4} + \epsilon(m)
\end{align*}
for every $\theta \in J$, where $\epsilon(m) \to 0$ as $m \to \infty$.
Moreover,
\begin{align}
\psi^\K(J) \subseteq C,\label{InCAtFirstPeak}
\end{align}
and if $m \geq 1$ is large enough,
\begin{align*}
\K \lambda^{1/7} \leq \partial_\theta \psi^\K(\theta) \leq \K \lambda
\end{align*}
for $\theta \in J + \omega$.
\end{lemma}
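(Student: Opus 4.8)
Throughout write $\partial$ for either $\partial_\theta$ or $\partial_\K$. The plan is to prove the three assertions in the stated order, since the derivative estimate on $J$ uses the inclusion $\psi^\K(J)\subseteq C$, and the estimate on $J+\omega$ uses both.

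For \eqref{InCAtFirstPeak}, fix $\theta_0\in J\subseteq I_m$. Since the attractor is unique and globally attracting (\cref{SmoothnessOfAttractor}), one has $\psi^\K(\theta_0)=\lim_{K\to\infty}\bigl(\Phi^K(\theta_0-K\omega,1/3)\bigr)_x$, the convergence being uniform and exponential by \cref{NegativeLyapunov}. For each large $K$, \cref{EverythingAligns} brings the orbit of $(\theta_0-K\omega,1/3)$ into $\Theta_{m-1}\times C$ after a bounded number of steps (iterating the earlier results further, and staying in $C$, to pass from whatever $\Theta_{n-1}\times C$ that lemma delivers). From then on I track the successive visits of the orbit to $I_m$: item~(iii)$_m$ of \cref{ZoomInductionStep} puts the orbit in $C$ at every such visit, and \cref{GoodReturnBound}, \cref{BadReturnBound} — together with the hypothesis $T(\theta_0)\le N_m^{3/4}$, which keeps the recovery time after the peak well below the return time $N_m$ — combined with \cref{TwoStepsAfterEntry} and \cref{CContractionLemma} return the orbit to $C$ before the next visit, so the induction along visits continues. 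Since $\theta_0$ is itself such a visit (occurring, for $K$ large, after the orbit has entered $C$), $\bigl(\Phi^K(\theta_0-K\omega,1/3)\bigr)_x\in C$; letting $K\to\infty$ and using that $C$ is closed gives $\psi^\K(\theta_0)\in C$. The same argument, using \cref{BadReturnBound} (which controls the recovery within $M_C(\K)\ll N_m$ steps once $m$ is large), yields $\psi^\K(\vartheta)\in C$ for all $\vartheta\in I_m$.

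For the derivative estimate on $J$, decompose the bi-infinite orbit of $(\theta_0,\psi^\K(\theta_0))$, which lies on the invariant curve, so $x_k=\psi^\K(\theta_k)$ for all $k\in\mathbb Z$. Let $-\tau<0$ be the most recent time the orbit lies in $I_m$; by \cref{DiophReturnTime} $\tau\ge N_m$, and $x_{-\tau}=\psi^\K(\theta_{-\tau})\in C$ by the previous paragraph. Starting from $(\theta_{-\tau},\psi^\K(\theta_{-\tau}))\in I_m\times C$, the recovery lemmas (and item~(iv)$_m$ of \cref{ZoomInductionStep} when $\K\in B_m$, or $M_C(\K)$ steps otherwise) bring the orbit to a point $\theta_{-N_1}\in\Theta_{m-1}$ with $\psi^\K(\theta_{-N_1})\in C$ after a number of steps $\ll N_m$; since no visit to $I_m$ occurs in $(-N_1,0)$ and $\theta_{-N_1}\notin I_m$, we have $N(\theta_{-N_1};I_m)=N_1\ge\tfrac12 N_m$. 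Applying \cref{SmallIteratedDerivatives} with $n=m$ to this segment bounds every term of \eqref{IteratedPartialDerivatives} except the one carrying $\partial\psi^\K(\theta_{-N_1})$ by $\lambda^{-1/4}$, while \cref{NegativeLyapunov} bounds the remaining factor $\bigl|\prod_{j=-N_1}^{-1}c(\theta_j)p'(\psi^\K(\theta_j))\bigr|$ by $c_m(3/5)^{N_1/2}$. Hence
\begin{align*}
|\partial\psi^\K(\theta_0)|\le\lambda^{-1/4}+\|\partial\psi^\K\|\cdot c_m\,(3/5)^{N_m/4},
\end{align*}
with $\|\partial\psi^\K\|<\infty$ since $\psi^\K$ is $C^\infty$; because $N_m$ grows far faster than $\log c_m$ (which is of order $M_{m-1}\ll N_m$), the second term is the desired $\epsilon(m)\to0$. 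The case $\partial=\partial_\K$ is identical, the $\partial\theta_{k-1}$-terms in \eqref{IteratedPartialDerivatives} being replaced by $\partial_\K c$-terms, which \cref{SmallIteratedDerivatives} treats the same way. Finally, for $\theta\in J+\omega$ write $\theta-\omega\in J$ and differentiate $\psi^\K(\theta)=c(\theta-\omega)p(\psi^\K(\theta-\omega))$:
\begin{align*}
\partial_\theta\psi^\K(\theta)=(\partial_\theta c)(\theta-\omega)\,p(\psi^\K(\theta-\omega))+c(\theta-\omega)\,p'(\psi^\K(\theta-\omega))\,\partial_\theta\psi^\K(\theta-\omega).
\end{align*}
By \eqref{InCAtFirstPeak} $p(\psi^\K(\theta-\omega))$ lies in a fixed subinterval of $(0,1)$ bounded away from its endpoints, so the first term is comparable to $(\partial_\theta c)(\theta-\omega)$; by the part just proved $|\partial_\theta\psi^\K(\theta-\omega)|\le\lambda^{-1/4}+\epsilon(m)$ and $c(\theta-\omega)p'(\psi^\K(\theta-\omega))\le4$, so the second term is negligible. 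As $J\subseteq I_m$ shrinks to $\{\alpha_c-\omega\}$, for $m$ large $(\partial_\theta c)(\theta-\omega)$ is within any prescribed factor of $(\partial_\theta c)(\alpha_c-\omega)$, which — from the explicit form of $c$, the location of $\alpha_c-\omega$ in $\mathcal{A}_0$, and the mirror image of the bound in \cref{BoundsOnFunctionC}(c) for the peak at $0$ — lies strictly between $\K\lambda^{1/7}$ and $\K\lambda$ with room to spare. Collecting the estimates gives $\K\lambda^{1/7}\le\partial_\theta\psi^\K(\theta)\le\K\lambda$ on $J+\omega$.

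The step I expect to be the real work is the first one: organizing the induction along visits of the backward-converging orbit to $I_m$, verifying that the recovery between consecutive visits genuinely fits inside the $N_m$ available steps (this is exactly what $T(\theta_0)\le N_m^{3/4}$ buys us), and treating $\K\in B_m$ and $\K\notin B_m$ uniformly. Once that is in place the derivative bounds reduce to bookkeeping on top of \cref{SmallIteratedDerivatives} and \cref{NegativeLyapunov}, the only quantitative point being the comparison $\log c_m\ll N_m$ that forces $\epsilon(m)\to0$.
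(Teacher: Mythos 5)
The overall strategy differs from the paper's in an interesting way: where the paper iterates \emph{forward} from a point $\theta_0\in J$, tracks the successive returns $s_0<s_1<\cdots$ to $J$, derives a self-referential recursion $|\partial\psi^\K(\theta_{s_{k+1}})|\le |\partial\psi^\K(\theta_0)|\,\epsilon(m)+(\text{const})\,\epsilon(m)+\lambda^{-1/4}$, and then closes it by passing to a subsequence $\theta_{s_{k'}}\to\theta_0$ (so the unknown $|\partial\psi^\K(\theta_0)|$ appears on both sides with a factor $\epsilon(m)<1$, and cancels), your argument goes \emph{backward} to the previous entry into $I_m$ and pays for the remaining unknown directly with the a~priori bound $\|\partial\psi^\K\|<\infty$ from \cref{SmoothnessOfAttractor}. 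That is a cleaner bookkeeping, but it introduces a genuine gap.

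The gap is the $\K$-dependence hidden in the error term. Your bound is $\lambda^{-1/4}+\|\partial\psi^\K\|\cdot c_m\,(3/5)^{N_m/4}$, and you only argue that $c_m(3/5)^{N_m/4}\to0$. But $\|\partial\psi^\K\|\sim(1-\K)^{-1/2}$ blows up as $\K\to1^-$, and in the only place the lemma is used (\cref{BigDerivative}), $m=n(\K)$ grows \emph{with} $\K$; so to get a quantity $\epsilon(m)\to0$ one must show $\|\partial\psi^\K\|$ is beaten by the contraction \emph{uniformly over the $\K$ allowed at scale $m$}. This is true — the hypothesis $T(\theta_0)\le N_m^{3/4}$ on $\theta_0=\alpha_c-\omega\in I_{m+1}\subseteq J$ forces $\log\frac1{1-\K}\lesssim N_m^{3/4}$, hence $\|\partial\psi^\K\|\lesssim e^{O(N_m^{3/4})}$, which is dominated by $(3/5)^{N_m/4}$ since $N_m\gg N_m^{3/4}$ — but you never make this quantitative link between the hypothesis on $T$ and the size of $\|\partial\psi^\K\|$; without it, "$\|\partial\psi^\K\|<\infty$" is a fixed-$\K$ statement that does not give an $\epsilon(m)$ depending only on $m$. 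The paper's bootstrapping sidesteps this entirely (its $\epsilon(m)=4^P(3/5)^{(N_m-P)/2}$ with $P\le 2N_m^{3/4}$ is manifestly $\K$-free). A secondary point: you invoke \cref{NegativeLyapunov} with $n=m$, but that corollary is stated on $S_m=\mathbb{T}\times B_m\times[a_m,1-a_m]$ and hence presupposes $\K\in B_m$, which the lemma does not assume; replacing it with the $\K$-uniform bound \eqref{TailContraction}$_m$ from \cref{ZoomInductionStep} (which is what the paper actually uses, and is valid for all $\K\in[0,1]$) avoids both this hypothesis issue and the $c_m$ factor. The remaining parts — the induction along visits to $I_m$ via $(iii)_m$ to show $\psi^\K(J)\subseteq C$, and the one-step differentiation of the invariance relation on $J+\omega$ — match the paper's argument.
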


\begin{proof}
We will iterate the segment given by $\theta_0 = \theta \in J \subseteq I_0$. For ease of notation, we set $x_0 = \psi^\K(\theta_0)$.

Let $0 = s_0 < s_1 < \dots$ be the return times to $J$, that is for $i \geq 0$, $\theta_i \in J \Leftrightarrow i = s_k$ for some $k \geq 0$.
Set $\theta_0^{(k)} = \theta_{s_k}, x_0^{(k)} = x_{s_k}$. Recall that $T = T(\theta_0, x_0)$ was defined as the smallest positive integer satisfying that $x_T \geq \frac1{100}$.
Now, suppose that $t \geq 0$ is the smallest integer satisfying
\begin{align*}
x_{T + t} \in C, \theta_{T + t} \in \Theta_{m-1}.
\end{align*}
Since $x_T \in [1/100, 99/100]$, \cref{EverythingAligns} implies that $t \leq 2M_{m-1} + 1 < K_m \ll \sqrt{N_m}$.

Set $P = T + t \leq (N_m)^{3/4} + \sqrt{N_m} \leq 2(N_m)^{3/4} \ll N_m$, then $\theta^{(k)}_P \in \Theta_{m-1}, x^{(k)}_P \in C$ for every $k \geq 0$. Now, \cref{InCWhenReturnToBad} implies that
\begin{align*}
\psi^\K(\theta^{(k+1)}_0) = x^{(k+1)}_0 = x_{s_{k+1}} \in C,
\end{align*}
for every $k \geq 1$, or that $\psi^\K(J) \subseteq C$. Additionally, \cref{TailContraction} gives that
\begin{align*}
\prod \limits_{i=P}^{U_k - 1} |c(\theta^{(k)}_i) p'(x^{(k)}_i)| \leq (3/5)^{(U_k - P)/2}
\end{align*}
where we have set $U_j = s_{j+1} - s_j$. Since $\theta^{(k)}_P \in \Theta_{m-1}, x^{(k)}_P \in C$, \cref{SmallIteratedDerivatives} implies that
\begin{align*}
|\partial_\theta x^{(k)}_{U_k}| &= |(\partial_\theta c(\theta^{(k)}_{P-1})) p(x^{(k)}_{P-1}) + \partial_\theta x^{(k)}_P \prod \limits_{i=P}^{U_k - 1} c(\theta^{(k)}_i) p'(x^{(k)}_i) +\\
&+ \sum \limits_{j = P+1}^{U_k-1} \partial_\theta c(\theta^{(k)}_{j-1}) p(x^{(k)}_{j-1}) \prod \limits_{i = j}^{U_k-1} c(\theta^{(k)}_i) p'(x^{(k)}_i)| \leq\\
&\leq |\partial_\theta x^{(k)}_P| \cdot (3/5)^{(U_k - P)/2} + \lambda^{-1/4}.
\end{align*}
Similarly, recalling that $|c(\theta) \cdot p'(x)| \leq 4$,
\begin{align*}
|\partial_\theta x^{(k)}_P| &\leq |\partial_\theta x^{(k)}_0| \cdot \prod \limits_{i=0}^{P - 1} |c(\theta^{(k)}_i) p'(x^{(k)}_i)| +\\
&+ \|\partial_\theta c\| \left(1 + \sum \limits_{j = 1}^{P-1} \prod \limits_{i = j}^{P-1} |c(\theta^{(k)}_i) p'(x^{(k)}_i)| \right) \leq\\
&\leq |\partial_\theta x^{(k)}_0| \cdot 4^P + \|\partial_\theta c\| \sum \limits_{j = 0}^{P-1} 4^{P-1-j} =\\
&= |\partial_\theta x^{(k)}_0| \cdot 4^P + \|\partial_\theta c\| \frac{4^P - 1}{3},
\end{align*}
where $\| \cdot \|$ denotes the sup-norm. Putting it together, we obtain, since $U_k \geq N_m \gg P$, that
\begin{align*}
|\partial_\theta x^{(k)}_{U_k}| &\leq \left( |\partial_\theta x^{(k)}_0| \cdot 4^P + \|\partial_\theta c\| \frac{4^P - 1}{3} \right) (3/5)^{(U_k - P)/2} + \lambda^{-1/4} \leq\\
&\leq |\partial_\theta x^{(k)}_0| \cdot \epsilon(m) + \|\partial_\theta c\| \epsilon(m) + \lambda^{-1/4},
\end{align*}
where
\begin{align*}
\epsilon(m) = 4^P \cdot (3/5)^{(N_m - P)/2} \leq 4^P \cdot (3/5)^{N_m/2 - (N_m)^{3/4}} \to 0,
\end{align*}
as $m \to \infty$.
By induction, since $x^{(k)}_{U_k} = x_{s_{k+1}}$, we get for every $k \geq 0$ that
\begin{align*}
|\partial_\theta x_{s_{k+1}}| &\leq |\partial x^{(0)}_0| \epsilon(m)^{k+1} + \|\partial_\theta c\| \sum \limits_{j=1}^{k+1} \epsilon(m)^j + \lambda^{-1/4} \sum \limits_{j=0}^{k} \epsilon(m)^j \leq\\
&\leq \left( |\partial x^{(0)}_0| + \|\partial_\theta c\| + \lambda^{-1/4} \right) \cdot \epsilon(m) + \lambda^{-1/4}.
\end{align*}
By passing to a subsequence $\{s_{k'}\}$ of $\{s_k\}$ which satisfies $\theta_{s_{k'}} \to \theta_0$, and noting that
\begin{align*}
\partial_\theta x_{s_{k'}} &= \partial_\theta \psi^\K(\theta_{s_{k'}}) =\\
&= \partial_\theta \psi^\K(\theta_0) + \partial_\theta^2 \psi^\K(\theta_0) (\theta_{s_{k'}} - \theta_0) + o(\theta_{s_{k'}} - \theta_0) = \partial_\theta \psi^\K(\theta_0) + o(1),
\end{align*}
as $k' \to \infty$, we obtain the inequality
\begin{align*}
|\partial_\theta \psi^\K(\theta_0)| \cdot (1 - \epsilon(m)) + o(1) \leq \left(\|\partial_\theta c\| + \lambda^{-1/4} \right) \cdot \epsilon(m) + \lambda^{-1/4},
\end{align*}
which we can write as
\begin{align*}
|\partial_\theta \psi^\K(\theta_0)| \leq \lambda^{-1/4} + \epsilon'(m),
\end{align*}
for some $\epsilon'(m)$ going to 0 as $m$ goes to infinity. The proof is exactly the same for $\partial_\K \psi^\K$.

By \cref{CDerivativeAroundSecondPeak}, $\K\lambda^{1/6} < \partial_\theta c_{\alpha_c, \K = 1}(\theta) < \K\lambda$ for every $\theta \in I_0+ \omega$.
When $\theta \in J$, then $\psi^\K(\theta) \in C$. Therefore
\begin{align*}
\frac{3}{10} < \frac32 \cdot p(1/3 + 1/100) \leq p(\psi^\K(\theta)) \leq 4p(1/3 + 1/100) < 95/100.
\end{align*}
Recall that $|\partial_\theta \psi^\K(\theta)| < (1 + \epsilon(m)) \lambda^{-1/4}$, where $\epsilon(m) \to 0$ as $m \to \infty$. Since
\begin{align*}
\partial_\theta \psi^\K(\theta + \omega) = \left(\partial_\theta c(\theta) \right) \cdot p(\psi^\K(\theta)) + c(\theta) \cdot p'(\psi^\K(\theta)) \cdot \partial_\theta \psi^\K(\theta),
\end{align*}
assuming that $\lambda$ is very large, we obtain after a straight-forward computation that
\begin{align*}
\K \lambda^{1/7} < \partial_\theta \psi^\K(\theta + \omega) < \K \lambda.
\end{align*}
\end{proof}

\begin{corr}\label{BigDerivative}
There is an $n_0 \geq 0$ such that, for every $n \geq n_0$, and every $\K \in B_n \backslash B_{n-1}$ (sufficiently close to 1)
\begin{align*}
\K \lambda^{1/7} \leq \partial_\theta \psi^\K(\theta) \leq \K \lambda
\end{align*}
for every $\theta \in I_n + \omega$, assuming that $\lambda > 0$ is sufficiently large. Moreover
\begin{align*}
\psi^\K(I_n) \subseteq C.
\end{align*}
\end{corr}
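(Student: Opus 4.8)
The plan is to deduce the corollary directly from \cref{DerivativeBoundBeforePeak}. Fix $n \geq n_0$ and $\K \in B_n \backslash B_{n-1}$; then $n = n(\K)$ is the smallest index with $\K \in B_n$, and such $\K$ automatically lie close to $1$ because $n(\K) \to \infty$ as $\K \to 1^-$ (indeed $V_\K(1-V_\K) \to 0$ forces $M_C(\K) \to \infty$ by the formula in \cref{BadReturnBound}, while $\K \in B_n$ requires $M_C(\K) \leq 2K_n - 2$ by \eqref{Bn}). The smooth curve $\psi^\K$ is provided by \cref{SmoothnessOfAttractor}. I would apply \cref{DerivativeBoundBeforePeak} with $m = n$ and $J = I_n$: the inclusion $I_{m+1} \subseteq J \subseteq I_m$ is immediate, and then \eqref{InCAtFirstPeak} gives $\psi^\K(I_n) \subseteq C$, while the final assertion of that lemma — the one that holds once $m$ is large, which is exactly what the constant $n_0$ encodes — gives $\K\lambda^{1/7} \leq \partial_\theta \psi^\K \leq \K\lambda$ on $J + \omega = I_n + \omega$. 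Thus everything reduces to verifying the single hypothesis of \cref{DerivativeBoundBeforePeak}: that $T(\theta_0) \leq (N_n)^{3/4}$ for every $\theta_0 \in I_n$, where $N_n = \min_{\theta \in (I_n + \omega)} N(\theta; I_n)$.

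To verify this I would first record three scale estimates. From \eqref{Bn} and \cref{BadReturnBound}, $\K \in B_n$ gives $M_C(\K) \leq 2K_n - 2$; moreover, unwinding the exponential formula for $M_C$ against the definition of $B_n$ shows that the right endpoint $\K_{\max}$ of $B_n$ satisfies $1 - \K_{\max} \asymp (4/5)^{2K_n}$, so the constant $a_n$ of \cref{InvariantSubset} obeys $a_n = \tfrac58(1 - \K_{\max}) \asymp (4/5)^{2K_n}$. From \cref{DiophReturnTime} together with \eqref{IndConstants}, $N_n \asymp (5/4)^{K_{n-1}/\tau}$ whereas $K_n \asymp (5/4)^{K_{n-1}/(4\tau)}$, hence $K_n \asymp (N_n)^{1/4}$ and $(N_n)^{3/4} \asymp K_n^{3}$.

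With these in hand the bound on $T(\theta_0)$ is a short case analysis on $x_0 = \psi^\K(\theta_0)$, for $\theta_0 \in I_n \subseteq I_0$. If $x_0 \in [1/100, 99/100]$, then \cref{BadReturnBound} (the $\theta_0 \in I_0$ branch) yields some $2 \leq k \leq M_C(\K) \leq 2K_n - 2$ with $x_k \geq 1/100$; if this $k$ equals $2$, one extra step suffices, since $\theta_2 = \theta_0 + 2\omega \not\in I_0 \cup (I_0 + \omega)$ by \cref{DiophReturnTime}, so $x_3 \in (1/100, 2/5)$ by \cref{CloseToCIfAwayFromPeak}; in all cases $T(\theta_0) \leq \max(M_C(\K), 3) \leq 2K_n$. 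If instead $x_0 \not\in [1/100, 99/100]$, then $p(1-x) = p(x)$ lets me assume $x_0 < 1/100$, and $x_0 \geq a_n$ by \cref{InvariantSubset}; \cref{TimeOfAscent} then bounds the first crossing of $1/100$ by $\log_{5/4}\frac{1}{20 a_n} = O(K_n)$ (and if that crossing occurs at a step $\leq 2$, I fall back to the previous case starting from a later iterate). Either way $T(\theta_0) \leq C_0 K_n$ for an absolute constant $C_0$, uniformly in $\theta_0 \in I_n$, and since $(N_n)^{3/4} \asymp K_n^3$ this is $\leq (N_n)^{3/4}$ once $K_n$ is large — which holds for $\lambda$ large, because $K_n \geq K_0 = [\lambda^{1/(28\tau)}]$. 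This verifies the hypothesis of \cref{DerivativeBoundBeforePeak} and hence the corollary.

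The step I expect to be the main obstacle is the second scale estimate, namely controlling how fast $a_n \to 0$. A priori, as $\K \to 1^-$ the attractor could lie very close to the repeller $\mathbb{T} \times \{0\}$ over $I_n$, and the ascent time from there back to $[1/100, 99/100]$ is exactly the quantity that must be shown to be $o\bigl((N_n)^{3/4}\bigr)$. This hinges on the quantitative link $1 - \K_{\max} \asymp (4/5)^{2K_n}$ between the definition $B_n = \{\K : M_C(\K) \leq 2K_n - 2\}$ and the exponential formula for $M_C(\K)$ in \cref{BadReturnBound}; once that is established, the ascent time is only $O(K_n)$, comfortably below $(N_n)^{3/4} \asymp K_n^3$, and the rest is bookkeeping already packaged inside \cref{DerivativeBoundBeforePeak}.
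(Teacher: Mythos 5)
Your proposal is correct and takes essentially the same route as the paper: both reduce the corollary to the hypothesis of \cref{DerivativeBoundBeforePeak} with $J = I_n$, $m = n(\K)$. The only real difference is how the return bound $T(\theta_0) \leq (N_n)^{3/4}$ is obtained. The paper does it in one line by invoking statement $(iv)_n$ of \cref{ZoomInductionStep} (with $k=n$), which directly yields $x_{2K_n+20} \in C$, $\theta_{2K_n+20} \in \Theta_{n-1}$, hence $T(\theta_0) \leq 2K_n + 20 \ll \sqrt{N_n}$. You instead unpack the same mechanism through \cref{BadReturnBound} and then also spend a paragraph on the case $\psi^\K(\theta_0) \not\in [1/100,99/100]$, controlling the ascent time via the invariant strip $[a_n, 1-a_n]$ of \cref{InvariantSubset} together with \cref{TimeOfAscent} and the estimate $a_n \asymp (4/5)^{2K_n}$. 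That second case is sound and the scale arithmetic ($a_n \asymp (4/5)^{2K_n}$, $K_n \asymp N_n^{1/4}$, so the ascent time $O(K_n) \ll K_n^3 \asymp N_n^{3/4}$) is correct, but it is work the paper avoids by appealing to $(iv)_n$, whose hypothesis $x_0 \in [1/100,99/100]$ is implicit in the way $T(\theta_0)$ is defined (the definition restricts to $(\theta_0, x_0) \in I_0 \times C$, so the $T$-hypothesis of \cref{DerivativeBoundBeforePeak} already presupposes the good range). Your version thus trades the packaged induction statement $(iv)_n$ for a more self-contained, slightly more defensive argument; both buy the same conclusion.

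One small caveat you might want to make explicit: by \cref{DiophReturnTime}, the first return time to $I_n$ satisfies $N_n \gg K_n$, so the $O(K_n)$ iterations in your ascent bound never re-enter $I_n$; the ascent is therefore governed solely by \cref{AscentFromBottom}, which holds for all $\theta$, and \cref{TimeOfAscent} applies cleanly. You also correctly note that the case $k=2$ of \cref{BadReturnBound} needs one extra step because $T$ is required to exceed $2$, which the paper's citation of $(iv)_n$ absorbs automatically.
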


\begin{proof}
Let $0 \leq \K < 1$ sufficiently close to 1 be given, and choose $J = I_{n}$, where $n = n(\K)$. Now \cref{QuickReturnFromWorstToGood} tells us that
\begin{align*}
x_{2K_n + 20} \in C, \theta_{2K_n + 20} \in \Theta_{n-1},
\end{align*}
that is $\max \limits_{\theta \in I_n} T(\theta_0) \leq 2K_n + 20$, where $K_n \ll \sqrt{N_n} \leq (N_n)^{3/4}$. Both statements now follow immediately from \cref{DerivativeBoundBeforePeak}.
\end{proof}

\begin{lemma}\label{ParamDerivativeAtAlpha}
There is an $0 < \epsilon \leq 1$ such that, for every $1 - \epsilon \leq \K < 1$,
\begin{align*}
\frac1{3} \leq \partial_\K \psi^\K(\alpha_c) \leq \frac52,
\end{align*}
provided that $\lambda > 0$ is sufficiently large. Moreover,
\begin{align*}
\lim \limits_{\K \to 1^-} \psi^\K(\alpha_c) = 1/2,
\end{align*}
and, as $\K \to 1^-$,
\begin{align}
|\psi^\K(\alpha_c) - 1/2| = O(1-\K).\label{DistanceFromOneHalfForAlphaC}
\end{align}
\end{lemma}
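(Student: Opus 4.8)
The plan is to reduce everything to the behaviour of $\psi^\K$ at the step entering the critical peak, via the invariance relation
\[
\psi^\K(\alpha_c)=c_{\alpha_c,\K}(\alpha_c-\omega)\,p\bigl(\psi^\K(\alpha_c-\omega)\bigr).
\]
The point is that $\psi^\K(\alpha_c-\omega)$ is completely under control for $\K$ close to $1$: since the nested intervals $I_n$ shrink onto $\alpha_c-\omega$, taking $n=n(\K)$ and $J=I_n$ in \cref{DerivativeBoundBeforePeak} (as in \cref{BigDerivative}) gives both $\psi^\K(\alpha_c-\omega)\in C$ and $|\partial_\K\psi^\K(\alpha_c-\omega)|\le\lambda^{-1/4}+\epsilon(n)$, with $\epsilon(n)\to0$ as $\K\to1^-$. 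Thus $\psi^\K(\alpha_c)$ is, up to a uniformly small error, an explicit affine function of $\K$ evaluated at a point of $C$.

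First I would prove the derivative bounds by differentiating the invariance relation in $\K$:
\[
\partial_\K\psi^\K(\alpha_c)=\bigl(\partial_\K c_{\alpha_c,\K}(\alpha_c-\omega)\bigr)p\bigl(\psi^\K(\alpha_c-\omega)\bigr)+c_{\alpha_c,\K}(\alpha_c-\omega)\,p'\bigl(\psi^\K(\alpha_c-\omega)\bigr)\partial_\K\psi^\K(\alpha_c-\omega).
\]
Because $c_{\alpha_c,\K}\le4$, $|p'|\le1$ on $C$, and the last factor is $O(\lambda^{-1/4})+o(1)$, the second summand is negligible once $\lambda$ is large. In the first summand, $\partial_\K c_{\alpha_c,\K}(\theta)=\tfrac{5/2}{1+\lambda g(\theta,\alpha_c)^2}\in(0,\tfrac52]$ is independent of $\K$, and $p(\psi^\K(\alpha_c-\omega))$ lies in a fixed interval about $p(1/3)$ because $\psi^\K(\alpha_c-\omega)\in C$; the exact size of $\partial_\K c_{\alpha_c,\K}(\alpha_c-\omega)$ is fixed by \cref{OldResult}, since the $\K=1$ chain $(\alpha_c,\tfrac12)\mapsto(\alpha_c+\omega,1)\mapsto(\alpha_c+2\omega,0)$ forces $c_{\alpha_c,1}(\alpha_c-\omega)p(\psi^1(\alpha_c-\omega))=\tfrac12$, i.e.\ fixes how far $\alpha_c-\omega$ sits from the peak of $c$ at $0$. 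Taking $\lambda$ large so the error terms are swamped then yields $\tfrac13\le\partial_\K\psi^\K(\alpha_c)\le\tfrac52$.

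Given this, $\psi^\K(\alpha_c)$ is monotone and Lipschitz in $\K$ on $[1-\epsilon,1)$, so it extends continuously to $\K=1$. To identify the limit I would iterate the invariance relation backwards $M$ steps: using the $\K$-uniform contraction of \cref{Contraction}, $\psi^\K(\alpha_c-\omega)$ agrees, up to $O((3/5)^{M/2})$ uniformly in $\K$, with the $x$-coordinate of $\Phi_{\alpha_c,\K}^M$ applied to a fixed point of $\Theta_{n-1}\times C$; letting $\K\to1$ with $M$ fixed (continuity of a finite composition) and then $M\to\infty$ identifies $\lim_{\K\to1^-}\psi^\K(\alpha_c-\omega)$ with the corresponding value of the critical orbit of $\Phi_{\alpha_c,1}$, whence $\lim_{\K\to1^-}\psi^\K(\alpha_c)=\psi^1(\alpha_c)=\tfrac12$ by the defining property of $\alpha_c$. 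Finally \eqref{DistanceFromOneHalfForAlphaC} follows by integrating the derivative bound: $\bigl|\psi^\K(\alpha_c)-\tfrac12\bigr|=\bigl|\int_\K^1\partial_s\psi^s(\alpha_c)\,ds\bigr|\le\tfrac52(1-\K)=O(1-\K)$.

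I expect the main obstacle to be the limit identification: connecting the smooth family $\{\psi^\K\}_{\K<1}$ to the nowhere-continuous SNA at $\K=1$ means showing that it is precisely the tame value $\psi^1(\alpha_c-\omega)\in C$, and not some value forced by the singular behaviour near the torus collision, that $\psi^\K(\alpha_c-\omega)$ converges to; here the construction of $\alpha_c$ in \cite{BjerkSNA} and the $\K$-uniformity of the contraction estimates in \cref{ZoomInductionStep} and \cref{Contraction} must be used carefully. A secondary point is controlling the $o(1)$ corrections in the differentiated relation uniformly as $n(\K)\to\infty$, so the explicit endpoints survive.
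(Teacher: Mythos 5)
Your proposal follows the same overall architecture as the paper's proof: differentiate the invariance relation $\psi^\K(\alpha_c)=c_\K(\alpha_c-\omega)\,p\bigl(\psi^\K(\alpha_c-\omega)\bigr)$ in $\K$, dispose of the $\partial_\K\psi^\K(\alpha_c-\omega)$ term using the uniform smallness from \cref{DerivativeBoundBeforePeak} and \cref{BigDerivative}, and identify $\lim_{\K\to1^-}\psi^\K(\alpha_c)=1/2$ by a double limit that combines $\K$-uniform contraction with the $\K=1$ orbit result $\lim_{n\to\infty}x_{M_n}=1/2$ of \cite{BjerkSNA}. The $O(1-\K)$ conclusion via integrating the derivative bound is also the same in substance as the paper's mean-value step.

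The one place where you genuinely depart from the paper is the evaluation of $\partial_\K c_{\alpha_c,\K}(\alpha_c-\omega)$. You pin it down through the $\K=1$ chain constraint $c_{\alpha_c,1}(\alpha_c-\omega)\cdot p\bigl(\text{(value in $C$)}\bigr)=\tfrac12$, which forces $c_{\K=1}(\alpha_c-\omega)$ to sit near $9/4$, well below the peak value $4$, and hence $\partial_\K c(\alpha_c-\omega)=c_{\K=1}(\alpha_c-\omega)-\tfrac32\approx\tfrac34$. The paper instead first-order Taylor-expands $c$ around $\theta=0$ and claims $c(\alpha_c-\omega)=\tfrac32+\K\tfrac52+o(\lambda^{-2/5})$; this ignores the second derivative $\partial_\theta^2 c(0)=O(\lambda)$, and since $|\alpha_c-\omega|$ sits at scale $\sim\lambda^{-1/2}$ (exactly as your chain argument shows), the quadratic correction is $O(1)$, not $o(\lambda^{-2/5})$. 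So your route is the robust one — and it is what actually feeds into the constant $c_{\K=1}(\alpha_c+\omega)\cdot\tfrac58$ in \cref{MinimumValue}. But you should then double-check the explicit endpoints: with $\partial_\K c(\alpha_c-\omega)\approx\tfrac34$ and $p(\psi^\K(\alpha_c-\omega))\approx\tfrac29$, the leading term of $\partial_\K\psi^\K(\alpha_c)$ comes out near $\tfrac16$, not $\tfrac13$, so the lower bound as written in the lemma does not follow from your (correct) input. This is harmless for the rest of the paper — only two-sided positive bounds are ever used — but the number $1/3$ should not simply be asserted.

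One minor point of rigour you already flag: $\psi^1$ is a.e.\ discontinuous, so $\psi^1(\alpha_c)$ and $\psi^1(\alpha_c-\omega)$ are not a priori well-defined pointwise. The paper avoids this by working with the orbit values $x_{M_n}(\K)$ for a fixed seed in $\Theta_{n-1}\times C$, which is exactly your ``value of the critical orbit of $\Phi_{\alpha_c,1}$''; just make sure the argument is phrased in those terms rather than in terms of a pointwise evaluation of the SNA.
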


\begin{proof}
For $\K$ sufficiently close to 1, \cref{BigDerivative} implies that $\psi^\K(I_n) \subseteq C$, and that $|\partial_\K \psi^\K(\theta)| < \lambda^{1/4} + \epsilon(n)$ for $\theta \in I_n$,
where $\epsilon(n) \to 0$ as $n \to \infty$.
By invariance of $\psi^\K$ under the map $\Phi_{\alpha_c, \K}$,
\begin{align*}
\partial_\K \psi(\alpha_c) = \partial_\K c(\alpha_c - \omega) p(\psi(\alpha_c - \omega) + c(\alpha_c - \omega) \partial_\K \psi^\K(\alpha_c - \omega).
\end{align*}
By definition of the set $\mathcal{A}_0 \ni \alpha_c$, $2\lambda^{-2/3} \leq 0 - (\alpha_c - \omega) \leq \lambda^{-2/5}/2$, which means that
\begin{align*}
c(\alpha_c - \omega) - c(0) = \lambda^{-2/5}/2 \partial_\theta c(0) + o(\lambda^{-2/5}) = o(\lambda^{-2/5}),
\end{align*}
or that $c(\alpha_c - \omega) = \frac32 + \K \frac52 + o(\lambda^{-2/5})$. This implies that $\partial_\K c(\alpha_c - \omega) = \frac52 + o(\lambda^{-2/5})$.
Therefore
\begin{align*}
(\frac52 + o(\lambda^{-2/5})) (\frac13 - \frac1{100}) + \lambda^{-1/4} + \epsilon(n) \leq \partial_\K \psi(\alpha_c) \leq (\frac52 + o(\lambda^{-2/5})) (\frac13 + \frac1{100}) + 4\lambda^{-1/4} + 4\epsilon(n),
\end{align*}
or
\begin{align*}
\frac13 \leq 2\frac13 + o(\lambda^{-1/10}) + \epsilon(n) \leq \partial_\K \psi(\alpha_c) \leq \frac54 + o(\lambda^{-1/10}) + 4\epsilon(n) \leq \frac52,
\end{align*}
if $n$ and $\lambda$ are suffciently large.

Suppose that $\theta_0 = \alpha_c - M_n \omega$, $x_0 \in C$. In \cite{BjerkSNA}, it was proved that, if $\K = 1$, then
\begin{align*}
\lim \limits_{n \to \infty} x_{M_n} = 1/2.
\end{align*}
Letting $x_{M_n}(\K)$ (a smooth function in $\K$) be as above, but corresponding to a $\K \in [0,1]$ sufficiently close to 1, we obtain uniform bounds on
\begin{align*}
\partial_\K x_{M_n}(\K).
\end{align*}
Since
\begin{align*}
x_{M_n}(\K) - x_{M_n}(1) = \partial_\K x_{M_n}(\tilde{\K})(\K - 1),
\end{align*}
for some $\K \leq \tilde{\K} \leq 1$, we have, for large enough $n \geq 0$,
\begin{align*}
|x_{M_n}(\K) - 1/2| &= |(x_{M_n}(\K) - x_{M_n}(1)) + (x_{M_n}(1) - 1/2)| < 2\epsilon,
\end{align*}
uniformly in $n$, for $\K$ sufficiently close to 1. From this, it follows that
\begin{align*}
\lim \limits_{\K \to 1^-} \psi^\K(\alpha_c) = \lim \limits_{n \to \infty} \lim \limits_{\K \to 1^-} x_{M_n}(\K) = 1/2.
\end{align*}
By the mean value theorem
\begin{align*}
\psi^\K(\alpha_c) = \lim \limits_{\tilde{\K} \to 1^-} \psi^{\tilde{\K}}(\alpha_c) + \partial_\K \psi^{\tilde{\K}}(\alpha_c)(\K - \tilde{\K}) + o(\K - \tilde{\K}) = 1/2 + O(1 - \K),
\end{align*}
since $\frac1{3} \leq \partial_\K \psi^\K(\alpha_c) \leq \frac52$.
\end{proof}

\begin{dfn}
Let $T_1(\K, \theta)$ be defined, for every $\theta \in I_0 + 3\omega$,
as the smallest integer $0 \leq T_1(\K, \theta)$ such that $\psi^\K(\theta + T_1(\K, \theta) \cdot \omega) \geq \frac1{100}$.
\end{dfn}

By its very definition $\max \limits_{\theta \in \mathbb{T}} T_1(\K, \theta) \leq M_C(\K)$, where $M_C(\K)$ is the constant appearing in \cref{Bn}. Hence, if
\begin{align*}
2K_{n-1} - 2 < \max \limits_{\theta \in I_0 + 3\omega} T_1(\K, \theta) \leq 2K_n - 2,
\end{align*}
then $\K \in B_n \backslash B_{n-1}$. Set
\begin{align*}
T_1(\K) = \max \limits_{\theta \in \mathbb{T}} T_1(\K, \theta).
\end{align*}

\begin{prop}\label{MinimumDistance}
Suppose that $\K < 1$ is sufficiently close to 1, and that $\K \in B_n \backslash B_{n-1}$, i.e. that
\begin{align*}
2K_{n-1} - 2 < T_1(\K) \leq 2K_n - 2.
\end{align*}
Then the the minimum distance between the repelling set and the attractor is attained $I_n + 3\omega$, and is asymptotically linear in $\K$. Specifically,
\begin{align}
\delta(\K) = c_{\K = 1}(\alpha_c + \omega) \cdot \frac58(1 - \K) + o(1-\K)\label{MinimumValue}
\end{align}
asymptotically as $\K \to 1^-$. Moreover,
\begin{align}
\psi^\K(\alpha_c) = \frac38 + \K\frac58 + o(1-\K).\label{PsiInAlphaC}
\end{align}
\end{prop}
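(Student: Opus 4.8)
The plan is to locate where the minimum distance $\delta(\K)$ is attained, reduce it to a one-variable optimization near $\alpha_c + 3\omega$, and then use the precise asymptotics for $\psi^\K(\alpha_c)$ already established in \cref{ParamDerivativeAtAlpha}. First I would argue that the minimum of $\psi^\K$ over $\mathbb{T}$ must occur in a small neighbourhood of $\alpha_c + 3\omega$. Away from the two peaks of $c$, we have $c(\theta) \approx \tfrac32$ and $\psi^\K(\theta) \in C$ by \cref{InCAtFirstPeak}, so $\psi^\K$ is bounded below by something near $\tfrac13 - \tfrac1{100}$; near the first peak (at $\theta \in I_n$) we also have $\psi^\K(I_n) \subseteq C$ by \cref{BigDerivative}. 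The only place $\psi^\K$ can get small is two steps after the critical chain: $\psi^\K(\alpha_c + \omega) = c(\alpha_c)p(\psi^\K(\alpha_c))$ is near the "potential maximum" $c(\alpha_c) p(1/2)$, then $\psi^\K(\alpha_c + 2\omega) = c(\alpha_c + \omega)p(\psi^\K(\alpha_c+\omega))$ — and since $\psi^\K(\alpha_c+\omega)$ is close to $1$ (when $\K$ is close to $1$), and $c(\alpha_c+\omega) < 4 = c_{\K=1}(\alpha_c+\omega)$, the next iterate $\psi^\K(\alpha_c + 3\omega) = c(\alpha_c + 2\omega) p(\psi^\K(\alpha_c+2\omega))$ is small. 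So the candidate for the minimum is at $\theta = \alpha_c + 3\omega$, or more precisely at the minimizer of $\psi^\K$ restricted to $I_n + 3\omega$; the definition of $T_1(\K)$ and the condition $\K \in B_n\setminus B_{n-1}$ are exactly what guarantee the relevant orbit segment returns to $[1/100,99/100]$ on the scale $2K_n$, so the minimum is attained on $I_n + 3\omega$ as claimed.

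Next I would compute the value. Using $p(x) = x(1-x)$ and the identity $p(1-x) = p(x)$, write $\psi^\K(\alpha_c + \omega) = c(\alpha_c) p(\psi^\K(\alpha_c))$. By \eqref{DistanceFromOneHalfForAlphaC}, $\psi^\K(\alpha_c) = \tfrac12 + O(1-\K)$, so $p(\psi^\K(\alpha_c)) = \tfrac14 + O((1-\K)^2) = \tfrac14 - O((1-\K)^2)$ (the deviation is quadratic since $p'(1/2) = 0$). Also $c(\alpha_c) = \tfrac32 + \K\tfrac52 + (\text{peak correction})$; near $\alpha_c$ the peak of $c$ attains essentially its maximum, so $c(\alpha_c) = \tfrac32 + \K\tfrac52 - O(\text{small})$, giving $\psi^\K(\alpha_c) = \tfrac38 + \K\tfrac58 + o(1-\K)$, which is \eqref{PsiInAlphaC} — note this is consistent with (and sharpens to first order) the quantity $V_\K = \tfrac38 + \K\tfrac58$ appearing in \cref{BadReturnBound}. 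Then $\psi^\K(\alpha_c + 2\omega) = c(\alpha_c + \omega)p(\psi^\K(\alpha_c+\omega))$, and since $1 - \psi^\K(\alpha_c+\omega) = 1 - c(\alpha_c)/4 + o(1-\K)$, which is $O(1-\K)$ near $\K = 1$, we get $p(\psi^\K(\alpha_c+\omega)) = \psi^\K(\alpha_c+\omega)\big(1 - \psi^\K(\alpha_c+\omega)\big)$ to leading order equal to $1\cdot(1 - \psi^\K(\alpha_c+\omega))$. Finally $\delta(\K) = \psi^\K(\alpha_c + 3\omega) + o(1-\K) = c(\alpha_c+2\omega)\cdot p(\psi^\K(\alpha_c+2\omega)) + \dots$; since $c(\alpha_c+2\omega) \to c_{\K=1}(\alpha_c+\omega)$ and the small factor is linear in $(1-\K)$, everything combines (after carrying the factors $\tfrac52$ from $\partial_\K c$ and the derivative bookkeeping) to $\delta(\K) = c_{\K=1}(\alpha_c + \omega)\cdot \tfrac58 (1-\K) + o(1-\K)$.

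The main obstacle I anticipate is making the localization argument fully rigorous — i.e. showing the minimum of $\psi^\K$ genuinely occurs on $I_n + 3\omega$ and not somewhere along the long recovery segment where the orbit climbs back toward $C$. The climb from a value just below $1/100$ back to $C$ passes through values that are decreasing in the first step but then increasing; one must check via \cref{AscentFromBottom} and the contraction estimates that the \emph{overall} minimum along this recovery is the initial drop, i.e. $\psi^\K(\alpha_c + 3\omega)$ itself, rather than anything encountered later. This requires pinning down that $\psi^\K$ on $I_n + 2\omega$ is close to $1$ (so the drop in the next step is genuine and roughly $1 - c(\cdot)/4$), using the sharpness of the peak at $\alpha_c$ from \cref{AlphaPeakZoom} to control $\psi^\K(\theta + \omega)$ for $\theta \in I_n + \omega$ away from $\alpha_c$, and then verifying that for $\theta$ near (but not equal to) the critical value the iterate $\psi^\K(\theta + 3\omega)$ is no smaller. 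The second, more routine, difficulty is bookkeeping the error terms: one must track that all the "peak corrections" to $c$ near $\alpha_c$ and $\alpha_c + \omega$ are $o(1-\K)$ (they are, being of size governed by $\lambda^{-1}$ times the squared distance, independent of $\K$), so that they do not interfere with the linear-in-$(1-\K)$ leading term.
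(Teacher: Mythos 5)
Your overall strategy (localize the minimum, use $\psi^\K(\alpha_c) = \tfrac12 + O(1-\K)$, push forward through the chain, absorb peak corrections into the $o(1-\K)$ term) is the same as the paper's. However, there is a genuine off-by-one error in the localization step, and it propagates into a wrong evaluation of the constant.

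The critical chain is $(\alpha_c-\omega,\approx\tfrac13)\mapsto(\alpha_c,\approx\tfrac12)\mapsto(\alpha_c+\omega,\approx 1)\mapsto(\alpha_c+2\omega,\approx 0)$. So $\psi^\K(\alpha_c+\omega)$ is the iterate that is close to $1$; the very next iterate, $\psi^\K(\alpha_c+2\omega) = c_\K(\alpha_c+\omega)\,p(\psi^\K(\alpha_c+\omega))\approx c_\K(\alpha_c+\omega)\cdot(1-\psi^\K(\alpha_c+\omega))$, is where the global minimum lives. One step further, $\psi^\K(\alpha_c+3\omega) = c(\alpha_c+2\omega)p(\psi^\K(\alpha_c+2\omega))\geq\frac54\,\psi^\K(\alpha_c+2\omega)$ by \cref{AscentFromBottom}, so the orbit is already climbing back up. Your proposal instead identifies the minimum with $\psi^\K(\alpha_c+3\omega)$, which is one iteration too late. (The interval $I_n$ in the proposition zooms in on $\alpha_c-\omega$, so ``$I_n+3\omega$'' centers on $\alpha_c+2\omega$, not $\alpha_c+3\omega$.)

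This off-by-one also shows up in a wrong evaluation of the constant. The second peak of $c$ sits at $\theta=\alpha_c$, so $c_{\K=1}(\alpha_c)=\tfrac32+\tfrac52=4$; but $\alpha_c+\omega$ lies away from both peaks, so $c_{\K=1}(\alpha_c+\omega)$ is close to $\tfrac32$, not $4$. Your line ``$c(\alpha_c+\omega)<4=c_{\K=1}(\alpha_c+\omega)$'' conflates the two, and the subsequent ``$c(\alpha_c+2\omega)\to c_{\K=1}(\alpha_c+\omega)$'' is a non-sequitur: these are values of $c$ at different angles and there is no such convergence. The correct computation is one step shorter: $\delta(\K)=\psi^\K(\alpha_c+2\omega)=c_\K(\alpha_c+\omega)\,p(\psi^\K(\alpha_c+\omega))$, and since $p(\psi^\K(\alpha_c+\omega))=\tfrac58(1-\K)+o(1-\K)$ and $c_\K(\alpha_c+\omega)=c_{\K=1}(\alpha_c+\omega)+O((1-\K)/\lambda)$, the product gives $c_{\K=1}(\alpha_c+\omega)\cdot\tfrac58(1-\K)+o(1-\K)$ directly.

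Finally, you are right that the localization must be made rigorous, and the paper's route is to show (a) that a value $>99/100$ can only arise for $\theta\in(I_0+\omega)\cup(I_0+2\omega)$, so a minimum below $1/100$ can only be attained for $\theta\in(I_0+2\omega)\cup(I_0+3\omega)$; (b) the drop coming from $I_0+\omega$ is bounded below by $1/25$ (so it does not compete); and (c) on $(I_0\setminus I_n)+2\omega$ the sharpness of the peak (\cref{AlphaPeakZoom}) caps $\psi^\K$ strictly below $\tfrac38+\K\tfrac58-\tfrac38\lambda^{1/2}(1-\K)$, so the resulting minimum exceeds $\psi^\K(\alpha_c+2\omega)$ by order $\lambda^{1/2}(1-\K)$. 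That last inequality, combined with $|I_n|\gtrsim\sqrt{1-\K}$, is what actually closes the argument that the minimum falls on $I_n+3\omega$; your sketch names the ingredients but does not assemble this comparison.
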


\begin{proof}
If $\psi(\theta) \in (a, 1/10)$, where $0 \leq a < 1/10$, then $4 \psi(\theta) \geq \psi(\theta + \omega) \geq \frac54 \psi(\theta)$ (see \cref{AscentFromBottom}),
or $\psi(\theta + \omega) \in [\frac54 a, 99/100]$.
Similarly, if $\psi(\theta) \in (9/10, b)$, where $9/10 < b \leq 1$, then $\psi(\theta + \omega) \in (\frac54(1-b), 99/100)$ (since $p(1 - x) = p(x)$).

As long as $\theta \not\in I_0 \cup (I_0 + \omega)$, then $\psi(\theta) \in [1/100, 99/100]$ implies that $\psi(\theta + \omega) \in [1/100, 2/5] \subset [1/100, 99/100]$
(see \cref{CloseToCIfAwayFromPeak}).

One implication of this is, that a value strictly greater than $99/100$ can never be attained for a $\theta \not\in (I_0 + \omega) \cup (I_0 + 2\omega)$.
Another one is that, if a value strictly less than 1/100 is attained, the minimum has to be attained in the iteration immediately following a value greater than 99/100,
i.e., for $\theta \in (I_0 + 2\omega) \cup (I_0 + 3\omega)$.

This means that we only need to analyze $\psi^\K(\theta)$ for $\theta \in (I_0 + \omega) \cup (I_0 + 2\omega) \cup (I_0 + 3\omega)$.

We know that the part of $\psi^\K$ lying below $1/100$ even in these intervals will rise with each iteration, meaning that the lowest part,
the one closes to 0, must come from a previous value strictly greater than 99/100. Therefore, we are interested in seeing how far above 99/100
$\psi^\K$ can get.

By the above discussion, necessarily $\psi(\theta) \leq 2/5$ for $\theta \in I_0$, and so the theoretical maximum for $I_0 + \omega$ is
\begin{align*}
\psi^\K(\theta) \leq 4 p(2/5) = 24/25.
\end{align*} 
The theoretical minimum coming from that is at least $\geq 1/25$. Thus, we turn to $I_0 + 2\omega$.

By \cref{DistanceFromOneHalfForAlphaC},
\begin{align*}
|\psi^\K(\alpha_c) - 1/2| = O(1-\K).
\end{align*}
Therefore
\begin{align*}
\psi^\K(\alpha_c + \omega) = c(\alpha_c) p(1/2 + O(1-\K)) = \left(\frac32 + \K \frac52 \right) \left(\frac14 + O((1-\K)^2) \right) = \frac38 + \K \frac58 + o(1-\K),
\end{align*}
and
\begin{align*}
1 - \psi^\K(\alpha_c + \omega) = \frac58 (1-\K) + o(1-\K).
\end{align*}
Note that this maximum is, up to the error term $o(1-\K)$, equal to the theoretical maximum $c(\alpha_c)p(1/2)$.
Therefore, the minimum is at most
\begin{align}
\psi^\K(\alpha_c + 2\omega) = c_\K(\alpha_c + \omega) p(\psi^\K(\alpha_c + \omega)) \leq 4 (1 - \psi^\K(\alpha_c + \omega)) \leq \frac52 (1-\K) + o(1-\K),\label{ValueAtAlphaCPlus2Omega}
\end{align}
and at least ($\theta \in I_n + 2\omega$)
\begin{align*}
\psi^\K(\theta + \omega) \geq \frac54 (1 - \psi^\K(\theta)) \geq 1 - \psi^\K(\theta) \geq \frac58 (1-\K) \geq \frac12(1-\K),
\end{align*}
for $\K$ sufficiently close to 1. More specifically, we have
\begin{align*}
\psi^\K(\theta + \omega) = c_\K(\theta)\psi^\K(\theta)(1 - \psi^\K(\theta)).
\end{align*}
There is some $\tilde{\theta}$ between $\theta$ and $\alpha_c$, such that
\begin{align*}
\psi^\K(\theta + \omega) = \psi^\K(\alpha_c) + \partial_\theta \psi^\K(\tilde{\theta})(\theta - \alpha_c).
\end{align*}
A quick Taylor expansion gives
\begin{align*}
p(y) = p(x) + (1 - 2x)(y - x) - (y - x)^2.
\end{align*}
Since $c(\theta) = c(\alpha_c) + \partial^2_\theta c(\alpha_c) (\theta - \alpha_c)^2 + o( (\theta - \alpha_c)^2)$ for $\theta$ very close to $\alpha_c$,
such as for $\theta \in I_n + \omega$, and $\psi^\K(\alpha_c) = 1/2 - \partial_\K \psi^{\tilde{\K}}(\alpha_c)(1-\K)$ for some $\tilde{\K}$ between 1 and $\K$,
this means that
\begin{align*}
\psi^\K(\theta + \omega) &= \left( c_\K(\alpha_c) + o(\theta - \alpha_c) \right) \left( \psi^\K(\alpha_c) + \left( -2\partial_\K \psi^{\tilde{\K}}(\alpha_c)(1-\K) \right) \partial_\theta \psi^\K(\tilde{\theta})(\theta - \alpha_c) +
o(\theta - \alpha_c) \right) =\\
&= \psi^\K(\alpha_c + \omega) - A_2(\K, \theta) \cdot (\theta - \alpha_c),
\end{align*}
for some constant $A_2(\K, \theta) > \frac12 \lambda^{-1/7}$, since $\partial_\theta \psi^\K(\tilde{\theta}) \geq \K\lambda^{1/7}$ (see \cref{BigDerivative})
and $\partial_\K \psi^{\tilde{\K}}(\alpha_c) \geq \frac13$ (see \cref{ParamDerivativeAtAlpha}).
Similarly, in the next iteration, we obtain
\begin{align*}
\psi^\K(\theta + 2\omega) &= c_\K(\theta + \omega) \left( p(\psi^\K(\alpha_c + \omega)) - (1 + O(1-K))(-A_2(\K, \theta) (\theta - \alpha_c) + o(\theta - \alpha_c) \right) =\\
&= c_\K(\theta + \omega) \left( p(\psi^\K(\alpha_c + \omega)) + A_2(\K, \theta) (\theta - \alpha_c) + o(\theta - \alpha_c) + o(1-\K) \right).
\end{align*}
Since $c_\K(\theta + \omega) = c_\K(\alpha_c + \omega) + A_3(\theta)(\theta - \alpha_c) + o(\theta - \alpha_c)$,
where $|A_3(\theta)| = |\partial_\theta c_\K(\alpha_c + \omega)| \leq \lambda^{-1/2}$ (see \cref{CDerivativeOutsideI0}), this reduces to
\begin{align*}
\psi^\K(\theta + 2\omega) &= c_\K(\alpha_c + \omega) \left( p(\psi^\K(\alpha_c + \omega)) + A_2(\K, \theta) (\theta - \alpha_c) + o(\theta - \alpha_c) + o(1-\K) \right) +\\
&+ A_3(\theta)(\theta - \alpha_c) \left( p(\psi^\K(\alpha_c + \omega)) + A_2(\K, \theta) (\theta - \alpha_c) + o(\theta - \alpha_c) + o(1-\K) \right) =\\
&= c_\K(\alpha_c + \omega) p(\psi^\K(\alpha_c + \omega)) + K_4(\K, \theta)(\theta - \alpha_c) + o(1-\K),
\end{align*}
where $K_4(\K,\theta) > 0$. This gives us immediately the asymptotic on the distance, since $p(\psi^\K(\alpha_c + \omega)) = \frac58(1-\K) + o(1-\K)$, as shown above.

If we can prove that no point outside $I_n + 2\omega$ reaches as high as this, we are done. Recall \cref{TimeOfAscent}, stating that
\begin{align*}
T_1(\K) \leq \max \limits_{\theta \in I_n} \log_{5/4} \frac1{20\psi(\theta + 3\omega)} \leq \log_{5/4} \frac1{10(1-\K)}.
\end{align*}
This of course means that
\begin{align*}
2K_{n-1} - 2 \leq T_1(\K) \leq \log_{5/4} \frac1{10(1-\K)}.
\end{align*}
By definition of $I_n$, $|I_n| = (4/5)^{K_{n-1}}$, or
\begin{align*}
|I_n| = (4/5)^{K_{n-1}} \geq (4/5)^{T_1(\K)/2 + 1} \geq \frac45 \sqrt{10(1-\K)} \geq 2 \sqrt{1-\K}.
\end{align*}
Since $I_n$ is centred at $\alpha_c$, this means that
\begin{align*}
[\alpha_c - \sqrt{\lambda^{1/2(1-\K)}}\lambda^{-1/4}, \alpha_c + \sqrt{\lambda^{1/2(1-\K)}}\lambda^{-1/4}] \subset I_n.
\end{align*}
Invoking \cref{AlphaPeakZoom}, we obtain the set inclusion
\begin{align*}
\{ \theta \in I_0 + \omega : c_{\alpha_c, \K}(\theta) \geq \left( \frac32 + \K \frac52 \right)(1 - \lambda^{1/2}(1 - \K)) \} \subset I_n.
\end{align*}
Hence, the theoretical maximum attained for $\theta \in (I_0 \backslash I_n) + 2\omega$ is
\begin{align*}
\psi^\K(\theta) < \left( \frac32 + \K \frac52 \right)(1 - \lambda^{1/2}(1 - \K)) p(1/2) = \left( \frac38 + \K \frac58 \right)(1 - \lambda^{1/2}(1 - \K)),
\end{align*}
which is by the order of $1-\K$ less than the maximum in $I_n$. Hence, the minimum for $\theta + \omega \in (I_0 \backslash I_n) + 3\omega$ satisfies
\begin{align*}
\psi^\K(\theta + \omega) \geq \frac54 (1 - \psi^\K(\theta)) \geq \left( \frac58 + \frac38\lambda^{1/2} \right)(1 - \K) > \psi^\K(\alpha_c + 2\omega),
\end{align*}
which is bigger than the minimum attained in $I_n + 3\omega$.
\end{proof}

\subsection{Asymptotic growth of the maximum derivative of the attractor}

The basic idea in this section is that the derivative in the interval $I_{n(\K)} + \omega$, which is centered at $\alpha_c$ where 1/2 is almost attained, is large and approximately linear in $\K$.
In the next iteration, this means that this segment becomes approximately quadratic around the maximum point, which is almost at$\alpha_c + \omega$.
The approximately quadratic shape around the minimum point (almost $\alpha_c + 2\omega$) is retained in the next ieration.

The derivative at a point $\theta + 2\omega \in I_{n(\K)} + 3\omega$ will be approximately equal to $(\theta - \alpha_c)$,
and the value $\psi^\K(\theta + 2\omega)$ will be approximately $(1-\K) + (\theta - \alpha_c)^2$.

Expanding the derivative at $\theta + (2 + T)\omega$ as a recurrence relation (as we have done several times before), the dominant term as $T$ grows will behave like
\begin{align*}
\partial_\theta \psi^\K(\theta + 2\omega) \cdot \prod_{k=0}^{T} c(\theta_k) \cdot p'(x_k) \sim \frac{\partial_\theta \psi^\K(\theta + 2\omega)}{\psi^\K(\theta + 2\omega)}
\sim \frac{\theta - \alpha_c}{(1-\K) + (\theta - \alpha_c)^2},
\end{align*}
when $T = T_1(\K, \theta)$ (see \cref{DefinitionOfT1} for the definition).

In practice, we will work with a slightly enlarged set $J_\K \supseteq I_{n(\K)} + \omega$ which is centered at $\alpha_c$. This set will be of size $\gtrsim \sqrt{1-\K}$.
This allows us to choose $(\theta - \alpha_c) \sim \sqrt{1-\K}$, which maximizes
\begin{align*}
\frac{\theta - \alpha_c}{(1-\K) + (\theta - \alpha_c)^2} \sim \frac1{\sqrt{1-\K}}.
\end{align*}
The last step is showing that the derivative can't grow much more.
The worst case would be when we get close to the peak only a few iterations after $T_1(\K, \theta)$ (when we have come back to the contracting region),
potentially causing the derivative to grow further.

If this were to occur, we would only visit parts so far from the peaks that it wouldn't have much effect on the derivative,
since we would need a much longer time to get back to the "worst parts" of the peaks. We show this by considering two cases:
\begin{itemize}
\item We just recently changed scales from some $I_m$ to $I_{m+1}$ (due to an increase in $\K$).
In this case, we show that actually we may work with $I_m$, as if it were the appropriate scale,
having all the constants work to our advantage (which they wouldn't have, had we been forced to work with $I_{m+1}$).
\item We changed scales a long time ago, meaning that $\frac1{\sqrt{1-\K}}$ is large enough to withstand the relatively small
products coming from having come close to the peak, even the ones using the estimates that were inappropriate in the former case.
\end{itemize}
This last bit is the contents of \cref{DerivativeGrowth}, the main result in this section.

\begin{lemma}\label{DerivativeGrowsDuringExpansion}
There is a constant $K > 0$ such that if $|\partial_\theta x_0| \geq K$ and $x_0 \leq \frac1{100}$, then for any $0 \leq \K < 1$
\begin{align*}
|\partial_\theta x_1| > |\partial_\theta x_0|.
\end{align*}
\end{lemma}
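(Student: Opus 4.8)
The plan is to extract the inequality directly from the one–step recursion for the derivative along the invariant curve, which is the $n=0$ instance of \eqref{IteratedPartialDerivatives}:
\begin{align*}
\partial_\theta x_1 = \bigl(\partial_\theta c(\theta_0)\bigr)\, p(x_0) + c(\theta_0)\, p'(x_0)\, \partial_\theta x_0.
\end{align*}
Since $c(\theta_0) > 0$ and $p'(x_0) = 1 - 2x_0 > 0$ whenever $x_0 \le \tfrac1{100}$, the term $c(\theta_0)\, p'(x_0)\, \partial_\theta x_0$ has the same sign as $\partial_\theta x_0$, so the triangle inequality gives
\begin{align*}
|\partial_\theta x_1| \ge c(\theta_0)\, p'(x_0)\, |\partial_\theta x_0| - |\partial_\theta c(\theta_0)|\, p(x_0).
\end{align*}
The first step is to bound the multiplicative factor from below: $c(\theta_0) \ge \tfrac32$ for every $\theta_0 \in \mathbb{T}$ and every $\K$, and $p'(x_0) = 1 - 2x_0 \ge \tfrac{98}{100}$ when $x_0 \le \tfrac1{100}$, so $c(\theta_0) p'(x_0) \ge \tfrac{147}{100} > 1$.

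The second step is to bound the additive term. One has $p(x_0) = x_0(1-x_0) \le x_0 \le \tfrac1{100}$, while $|\partial_\theta c(\theta_0)| \le \lambda$ for all $\theta_0$ once $\lambda$ is large: off the peaks this is immediate from \cref{CDerivativeOutsideI0}, and near them it follows from \cref{CDerivativeAroundSecondPeak} (or from a direct estimate on the explicit $c_{\alpha,\K}$, which in fact gives $\partial_\theta c = O(\sqrt{\lambda})$, though the crude bound $\lambda$ is all we need, and it is uniform in $\K$ since $|\partial_\theta c| \le \K\lambda$). Combining the two steps,
\begin{align*}
|\partial_\theta x_1| \ge \tfrac{147}{100}\, |\partial_\theta x_0| - \tfrac{\lambda}{100}.
\end{align*}

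The third step is the choice of constant: the right–hand side exceeds $|\partial_\theta x_0|$ exactly when $\tfrac{47}{100}|\partial_\theta x_0| > \tfrac{\lambda}{100}$, i.e. when $|\partial_\theta x_0| > \lambda/47$. Hence any $K > \lambda/47$ works, and for definiteness one may take $K = \lambda$, which is legitimate since $\lambda$ is a fixed constant throughout this section. I do not expect a genuine obstacle here; the only point worth flagging is that the additive term $(\partial_\theta c)\, p(x_0)$ could a priori cancel the multiplicative growth, and the hypothesis $x_0 \le \tfrac1{100}$ is precisely what prevents this, forcing $p(x_0)$ to be small so that a moderately large derivative dominates it and keeps growing.
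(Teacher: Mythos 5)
Your proof is correct and follows essentially the same route as the paper's: both apply the one-step derivative recursion, bound the multiplier $c(\theta_0)p'(x_0)$ from below by a constant exceeding $1$ using $x_0 \le \tfrac1{100}$, bound the inhomogeneous term $\partial_\theta c \cdot p(x_0)$ by a fixed constant, and choose $K$ large enough to make the multiplicative gain dominate. (Minor remark: the sign observation is unnecessary, since the reverse triangle inequality $|a+b|\ge |b|-|a|$ holds unconditionally.)
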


\begin{proof}
Since $x_0 \leq 1/100$, $c_\K(\theta_0) p'(x_0) \geq \frac32 (1 - \frac2{100}) \geq \frac54$. Now
\begin{align*}
|\partial_\theta x_1| &= | (\partial_\theta c_\K(\theta_0)) \cdot p(x_0) + c_\K(\theta_0) \cdot p'(x_0) \cdot \partial_\theta x_0| \geq\\
&\geq |c_\K(\theta_0) \cdot p'(x_0) \cdot \partial_\theta x_0| - |\partial_\theta c_\K(\theta) \cdot p(x_0)| \geq\\
&\geq \frac54 \cdot |\partial_\theta x_0| - |\partial_\theta c_\K(\theta)|.
\end{align*}
If $|\partial_\theta x_0|$ is sufficiently large, the conclusion follows.
\end{proof}

Recall that we defined $T_1(\K, \theta)$, for $\theta \in I_{n(\K)} + 3\omega$, as the smallest integer $0 \leq T_1(\K, \theta)$ such that $\psi^\K(\theta + T_1(\K, \theta) \cdot \omega) \geq \frac1{100}$.
Set
\begin{align}
T_1(\K) = \max \limits_{\theta \in I_{n(\K)} + 3\omega} T_1(\K, \theta).\label{DefinitionOfT1}
\end{align}

\begin{lemma}\label{MayChooseSquareRoot}
When $\K < 1$ is sufficiently close to 1, the following holds:

If $2K_{n-1} - 2 < T_1(\K) \leq 2K_n - 2$, then there is an interval $J_\K \subseteq I_n + 2\omega$, centered at the point $\alpha_c$, satisfying
\begin{align}
\K\lambda^{1/7} \leq \partial_\theta \psi^\K(\theta) \leq \K \lambda,\label{JDerivativeInequality}
\end{align}
for every $\theta \in J_\K$, and
\begin{align*}
|J_\K| \geq \frac45 (\sqrt{1 - \K})^{1/\eta},
\end{align*}
where $\eta = \frac{T_1(\K)}{2K_{n-1} - 2} > 1$.
\end{lemma}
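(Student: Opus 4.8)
The plan is to take for $J_\K$ the interval $I_{n(\K)}+\omega$ itself. Write $n=n(\K)$; the hypothesis $2K_{n-1}-2<T_1(\K)\le 2K_n-2$ means in particular that $\K\in B_n\setminus B_{n-1}$, and that $\eta=\frac{T_1(\K)}{2K_{n-1}-2}>1$. By the construction of the $I_k$, the interval $J_\K=I_n+\omega$ is centered at $\alpha_c$ and has length $|J_\K|=|I_n|=(4/5)^{K_{n-1}}$, so there are only two things to check: the two‑sided bound \eqref{JDerivativeInequality} on $J_\K$, and the lower bound on $|J_\K|$.

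For the derivative bound I would quote \cref{BigDerivative} directly: since $\K$ is close to $1$ (so $n=n(\K)$ is as large as we please) and $\K\in B_n\setminus B_{n-1}$, it gives $\K\lambda^{1/7}\le\partial_\theta\psi^\K(\theta)\le\K\lambda$ for every $\theta\in I_n+\omega=J_\K$, together with $\psi^\K(I_n)\subseteq C$. From first principles this is \cref{DerivativeBoundBeforePeak} applied with $m=n$ and $J=I_n$: that lemma requires $T(\theta_0)\le (N_n)^{3/4}$ for all $\theta_0\in I_n$, and for such $\theta_0$ the orbit of $(\theta_0,\psi^\K(\theta_0))$, which has $\psi^\K(\theta_0)\in C$, satisfies $T(\theta_0)\le 3+T_1(\K,\theta_0+3\omega)\le 3+T_1(\K)\le 2K_n+1$ by the definition of $T_1$, while $2K_n+1\ll\sqrt{N_n}\le (N_n)^{3/4}$ because $N_n\gg M_n\gg K_n$ by the constructions in \cref{ZoomInductionStep}.

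For the length bound the key input is that $T_1(\K)$ cannot be much larger than $\log_{5/4}\frac1{1-\K}$. By \cref{MinimumDistance} one has $\psi^\K(\theta)\ge\frac12(1-\K)$ for $\theta\in I_n+3\omega$ (for $\K$ close to $1$), so \cref{TimeOfAscent} gives, for each such $\theta$ with $\psi^\K(\theta)<\frac1{100}$, that $T_1(\K,\theta)\le\log_{5/4}\frac1{20\psi^\K(\theta)}\le\log_{5/4}\frac1{10(1-\K)}$ (the bound being trivial when $\psi^\K(\theta)\ge\frac1{100}$); taking the maximum over $\theta\in I_n+3\omega$ then yields $T_1(\K)\le\log_{5/4}\frac1{10(1-\K)}$, i.e. $1-\K\le\frac1{10}(4/5)^{T_1(\K)}$. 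Since $\eta=\frac{T_1(\K)}{2K_{n-1}-2}$ gives $\frac{T_1(\K)}{2\eta}=K_{n-1}-1$, it follows that
\begin{align*}
\frac45\bigl(\sqrt{1-\K}\bigr)^{1/\eta}=\frac45(1-\K)^{1/(2\eta)}&\le\frac45\Bigl((4/5)^{T_1(\K)}\Bigr)^{1/(2\eta)}=\frac45(4/5)^{K_{n-1}-1}\\
&=(4/5)^{K_{n-1}}=|J_\K|,
\end{align*}
with room to spare coming from the discarded factor $\frac1{10}$.

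There is no genuine obstacle once \cref{BigDerivative}, \cref{MinimumDistance} and \cref{TimeOfAscent} are in hand; the step needing the most care is the last display, and what makes everything fit is that the scale $n(\K)$ has been defined (through the sets $B_n$) precisely so that $T_1(\K)$ lands in the window $(2K_{n-1}-2,\,2K_n-2]$. Its upper endpoint keeps the ascent time $T(\theta_0)\le 2K_n+1\ll (N_n)^{3/4}$, which is what lets \cref{DerivativeBoundBeforePeak} (hence \cref{BigDerivative}) control $\partial_\theta\psi^\K$ on all of $I_n+\omega$; its lower endpoint forces $\eta>1$ and, combined with $1-\K\le\frac1{10}(4/5)^{T_1(\K)}$ and $|I_n|=(4/5)^{K_{n-1}}$, produces the bound $|J_\K|\ge\frac45(\sqrt{1-\K})^{1/\eta}$.
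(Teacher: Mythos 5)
Your proof is correct and takes essentially the same route as the paper: both choose $J_\K$ to be the interval of length $|I_{n(\K)}|=(4/5)^{K_{n-1}}$ centered at $\alpha_c$, both invoke \cref{BigDerivative} (itself via \cref{DerivativeBoundBeforePeak}) for the two-sided bound on $\partial_\theta\psi^\K$, and both combine \cref{TimeOfAscent} with the lower bound $\psi^\K\ge\frac12(1-\K)$ from \cref{MinimumDistance} to get $T_1(\K)\le\log_{5/4}\frac1{10(1-\K)}$, then substitute $K_{n-1}=\frac{T_1(\K)}{2\eta}+1$ into $|I_n|=(4/5)^{K_{n-1}}$.
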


\begin{proof}
By \cref{TimeOfAscent},
\begin{align*}
T_1(\K) = \max \limits_{\theta \in I_{n(\K)} + 3\omega} \log_{5/4} \frac1{20\psi^\K(\theta)}.
\end{align*}
Now, \cref{MinimumValue} implies that
\begin{align*}
\min \limits_{\theta \in I_{n(\K)} + 3\omega} \psi^\K(\theta) \geq \frac32 \cdot \frac58(1-\K) + o(1-\K) \geq \frac12(1-\K).
\end{align*}
Therefore
\begin{align*}
T_1(\K) \leq \log_{5/4} \frac1{10(1-\K)}.
\end{align*}
\Cref{BigDerivative} implies that any such $J_\K$ can include at least the interval $I_n$, which is centered at $\alpha_c$.
Now, recalling that $T_1(\K) = \eta (2K_{n-1} - 2)$, or $K_{n-1} = \frac{T_1}{2\eta} + 1$, we get
\begin{align*}
|I_n| = (4/5)^{K_{n-1}} > (4/5)^{T_1(\K)/(2\eta) + 1} \geq (4/5) \cdot (\sqrt{10 (1-\K)})^{1/\eta} \geq (4/5) \cdot (\sqrt{1-\K})^{1/\eta}.
\end{align*}
Hence $J_\K = I_n$ satisfies the conclusions.
\end{proof}

From this point on, let $J_\K$ denote the largest interval centered at $\alpha_c$, and satisfying the conclusion in \cref{MayChooseSquareRoot}.

\begin{lemma}
Suppose that $0 \leq \K < 1$, and $n = n(\K)$. If $T_1(\K) \leq K_{n-1}^{3/2}(2K_{n-1} - 2)$, then 
\begin{align*}
J_\K \supseteq I_{n-1} + \omega.
\end{align*}
\end{lemma}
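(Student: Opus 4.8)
The plan is to apply \cref{DerivativeBoundBeforePeak} with the choice $J = I_{n-1}$, i.e.\ with $m = n-1$. This is permissible: one has $I_{(n-1)+1}=I_n\subseteq I_{n-1}\subseteq I_{n-1}$, and for $\K$ close enough to $1$ the scale $n=n(\K)$ is as large as we like — since $M_C(\K)\to\infty$ forces $n(\K)\to\infty$ as $\K\to1^-$ — so in particular $m=n-1\ge1$ and $m$ is large. The conclusion of \cref{DerivativeBoundBeforePeak} then gives $\K\lambda^{1/7}\le\partial_\theta\psi^\K(\theta)\le\K\lambda$ for every $\theta\in I_{n-1}+\omega$; that is, $I_{n-1}+\omega$ is an interval satisfying \eqref{JDerivativeInequality}, so by the maximality in the definition of $J_\K$ we conclude $J_\K\supseteq I_{n-1}+\omega$, as claimed. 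Everything therefore reduces to checking the hypothesis of \cref{DerivativeBoundBeforePeak}: that $T(\theta_0)\le (N_{n-1})^{3/4}$ for every $\theta_0\in I_{n-1}$, where $N_{n-1}=\min_{\theta\in I_{n-1}+\omega}N(\theta;I_{n-1})$.

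First I would bound $T(\theta_0)$ from above, uniformly in $\theta_0\in I_{n-1}$. Since $\delta(\K)=\min_\theta\psi^\K(\theta)$, every orbit value is $\ge\delta(\K)$, and by \cref{AscentFromBottom} (valid for all $\theta$, and with no single step carrying a value below $1/100$ past $1/10$) each iterate with value $<1/100$ is followed by one at least $\tfrac54$ times larger. Hence an orbit can spend at most $\log_{5/4}\tfrac1{100\delta(\K)}+1$ consecutive iterates below $1/100$, so $T(\theta_0)\le 4+\log_{5/4}\tfrac1{100\delta(\K)}$. By \cref{MinimumDistance}, $\delta(\K)=c_{\K=1}(\alpha_c+\omega)\tfrac58(1-\K)+o(1-\K)\ge\tfrac12(1-\K)$ for $\K$ close to $1$, so $T(\theta_0)\le 4+\log_{5/4}\tfrac1{50(1-\K)}$. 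For the reverse comparison, note $\alpha_c\in I_n$, so $\alpha_c+3\omega$ is among the points over which $T_1(\K)$ is maximised; using $c\le4$, $p(x)\le x$, and $\psi^\K(\alpha_c+2\omega)\le\tfrac52(1-\K)+o(1-\K)$ from \eqref{ValueAtAlphaCPlus2Omega}, one gets $T_1(\K)\ge T_1(\K,\alpha_c+3\omega)\ge\log_4\tfrac1{100\psi^\K(\alpha_c+3\omega)}\ge\log_4\tfrac1{1-\K}-C_0$ with $C_0$ absolute. Combining, $T(\theta_0)\le 4+\log_{5/4}\tfrac1{1-\K}\le D\,T_1(\K)+C_1$ with $D=\log4/\log(5/4)<7$ and $C_1$ absolute.

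Now I would insert the hypothesis $T_1(\K)\le K_{n-1}^{3/2}(2K_{n-1}-2)<2K_{n-1}^{5/2}$, which yields $T(\theta_0)<14K_{n-1}^{5/2}+C_1<15K_{n-1}^{5/2}$ once $\lambda$ (hence $K_{n-1}\ge K_0=[\lambda^{1/(28\tau)}]$) is large. On the other hand, by \cref{DiophReturnTime} one has $N_{n-1}\ge[(\kappa/|I_{n-1}|)^{1/\tau}]$, and the scaling imposed in \cref{ZoomInductionStep} — $|I_{n-1}|=(4/5)^{K_{n-2}}$ and $K_{n-1}\le 2(5/4)^{K_{n-2}/(4\tau)}$ for $n\ge2$ — gives $N_{n-1}\ge c(\kappa,\tau)K_{n-1}^4$, hence $(N_{n-1})^{3/4}\ge c'K_{n-1}^3$ for a constant $c'=c'(\kappa,\tau)>0$. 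Since $K_{n-1}^{5/2}=o(K_{n-1}^3)$, choosing $\lambda$ large enough that $K_{n-1}\ge(15/c')^2$ makes $15K_{n-1}^{5/2}\le c'K_{n-1}^3$, i.e.\ $T(\theta_0)\le (N_{n-1})^{3/4}$ for all $\theta_0\in I_{n-1}$, as required; applying \cref{DerivativeBoundBeforePeak} and invoking the maximality of $J_\K$ finishes the proof.

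The dynamical content is short — orbits starting in $I_{n-1}$ climb back from their deep minimum to the contracting region in $O(\log\tfrac1{1-\K})$ steps, which in this regime is negligible next to the return time $N_{n-1}\approx K_{n-1}^4$ to $I_{n-1}$ — so I expect the only real work to be bookkeeping: carrying the constants in the chain $T(\theta_0)\lesssim\log\tfrac1{1-\K}\lesssim T_1(\K)$ so that they are independent of $n$ (which is precisely what lets the single hypothesis $T_1(\K)\lesssim K_{n-1}^{5/2}$ beat $(N_{n-1})^{3/4}\approx K_{n-1}^3$ uniformly once $\lambda$ is fixed large), and checking the index and proximity conditions ($m=n-1\ge1$ with $m$ large, i.e.\ $\K$ close to $1$) under which \cref{DerivativeBoundBeforePeak} may be invoked.
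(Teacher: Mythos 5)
Your proposal is correct and reaches the conclusion by the same mechanism the paper uses: feed $J=I_{n-1}$ into \cref{DerivativeBoundBeforePeak}, note that $m=n-1\ge1$ is large for $\K$ close to $1$ (since $n(\K)\to\infty$), and invoke the maximality in the definition of $J_\K$. The real work — verifying the return-time hypothesis $T(\theta_0)\le (N_{n-1})^{3/4}$ for all $\theta_0\in I_{n-1}$ — is where you diverge from the paper. The paper observes directly that, up to an additive $3$, $T(\theta_0)=T_1(\K,\theta_0+3\omega)\le T_1(\K)$, and then the stated hypothesis $T_1(\K)\le K_{n-1}^{3/2}(2K_{n-1}-2)\ll K_{n-1}^3\sim (N_{n-1})^{3/4}$ closes the argument in one line. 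You instead produce two independent logarithmic bounds — $T(\theta_0)\lesssim\log_{5/4}\tfrac1{1-\K}$ via the global minimum $\delta(\K)\gtrsim 1-\K$ from \cref{MinimumDistance}, and $T_1(\K)\gtrsim\log_4\tfrac1{1-\K}$ via $\psi^\K(\alpha_c+2\omega)\lesssim 1-\K$ from \eqref{ValueAtAlphaCPlus2Omega} — and stitch them into $T(\theta_0)\le D\,T_1(\K)+C_1$ with $D=\log4/\log(5/4)<7$. That multiplicative $D$, and the additive constants it drags along, cost you nothing because the gap $K_{n-1}^{5/2}\ll K_{n-1}^3$ has a full factor of $K_{n-1}^{1/2}$ of slack, and $K_{n-1}\ge K_0=[\lambda^{1/(28\tau)}]$ is as large as you like once $\lambda$ is fixed large. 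Your version is longer but also somewhat more robust: it does not require $T_1(\K)=\max_{\theta\in I_n+3\omega}T_1(\K,\theta)$ to dominate $T_1(\K,\theta)$ for $\theta\in I_{n-1}+3\omega\setminus(I_n+3\omega)$, a point the paper's one-line argument quietly glosses over (and which in any case holds because the deepest dip of $\psi^\K$ occurs in $I_n+3\omega$). On the other hand it introduces a dependence on \cref{MinimumDistance}, which the paper's proof of this lemma avoids. Both are valid; the paper's is shorter, yours is more self-contained in its hypothesis check.
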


\begin{proof}
By our assumptions on $T_1(\K)$,
\begin{align*}
T_1(\K) \leq K_{n-1}^{3/2}(2K_{n-1} - 2) \ll K_{n-1}^3 \sim (N_{n-1})^{3/4}.
\end{align*}
Applying \cref{DerivativeBoundBeforePeak} to the set $J = I_{n-1}$, the statement follows, since every $T_1(\K, \theta) \leq T_1(\K) \leq (N_{n-1})^{3/4}$.

Since $I_{n-1}$ is centered at $\alpha_c$, the set $I_{n-1}$ satisfies the conclusions in \cref{MayChooseSquareRoot}.
\end{proof}

We recall that $J_\K + \omega$ is where the maximum of the graph is located, and $J_\K + 2\omega$ will be the location of the minimum.

\begin{lemma}\label{DerivativeAfterPeak}
Assume that $0 \leq \K < 1$ is sufficiently close to 1. Then there are numbers $\frac1K < A_1(\K, \theta) < K, \frac1K < A_2(\K,\theta) < K$ (where $K > 0$),
depending only on $\K$ and $\theta$, such that, for every $\theta \in J_\K$,
\begin{itemize}
\item $\partial_\theta \psi^\K(\theta + \omega) = -A_1(\K,\theta) \cdot (\theta - \alpha_c) + O(1-\K)$, and
\item $\partial_\theta \psi^\K(\theta + 2\omega) = A_2(\K,\theta) \cdot (\theta - \alpha_c) + O(1-\K)$.
\end{itemize}
\end{lemma}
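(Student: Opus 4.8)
The plan is to obtain both identities by differentiating the invariance relations $\psi^\K(\theta+\omega) = c_\K(\theta)\,p(\psi^\K(\theta))$ and $\psi^\K(\theta+2\omega) = c_\K(\theta+\omega)\,p(\psi^\K(\theta+\omega))$ once each and Taylor-expanding every factor around $\theta = \alpha_c$, exactly as the value expansions were produced in the proof of \cref{MinimumDistance}. First I would record the inputs: (a) $\alpha_c$ is a non-degenerate critical point of $c_\K$, with $\partial_\theta c_\K(\alpha_c) = 0$ and $\partial_\theta^2 c_\K(\alpha_c) = -20\K\lambda\pi^2\sin^2(\pi\alpha_c) < 0$, read off the Taylor expansion in the proof of \cref{BoundsOnFunctionC} (with $\sin\pi\alpha_c$ bounded away from $0$ since $\alpha_c\approx\omega$); (b) $\psi^\K(\alpha_c) = \tfrac12 + O(1-\K)$ by \eqref{DistanceFromOneHalfForAlphaC}, so $p'(\psi^\K(\alpha_c)) = 1-2\psi^\K(\alpha_c) = O(1-\K)$ and $p(\psi^\K(\alpha_c)) = \tfrac14 + O((1-\K)^2)$; (c) $\psi^\K(J_\K)\subseteq C$ and $\K\lambda^{1/7}\le\partial_\theta\psi^\K\le\K\lambda$ on $J_\K$ by \cref{BigDerivative} and \eqref{JDerivativeInequality}; (d) $|J_\K|\le|I_{n(\K)}|\to 0$ as $\K\to 1^-$ (the working scale tends to infinity), hence $\theta-\alpha_c = o(1)$ and $\psi^\K(\theta) = \tfrac12 + o(1)$ uniformly on $J_\K$; and (e) for $\theta\in J_\K$ one has $\theta+\omega\notin I_0\cup(I_0+\omega)$ — which follows from \ref{DC} applied to $2\omega$ once $\lambda$ is large — so that $|\partial_\theta c_\K(\theta+\omega)| < \lambda^{-1/2}$ by \cref{CDerivativeOutsideI0}, while $c_\K(\theta+\omega)\in[\tfrac32,4]$.

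For the first bullet I would write $\partial_\theta\psi^\K(\theta+\omega) = (\partial_\theta c_\K(\theta))\,p(\psi^\K(\theta)) + c_\K(\theta)\,p'(\psi^\K(\theta))\,\partial_\theta\psi^\K(\theta)$. In the first summand, $\partial_\theta c_\K(\theta) = \partial_\theta^2 c_\K(\alpha_c)(\theta-\alpha_c)(1 + O(\theta-\alpha_c))$ and $p(\psi^\K(\theta)) = \tfrac14 + o(1)$; in the second, the mean value theorem for $\psi^\K$ between $\alpha_c$ and $\theta$, together with (b), gives $p'(\psi^\K(\theta)) = -2\,\partial_\theta\psi^\K(\tilde\theta)(\theta-\alpha_c) + O(1-\K)$ with $\tilde\theta\in J_\K$, so $\partial_\theta\psi^\K(\tilde\theta)\in[\K\lambda^{1/7},\K\lambda]$. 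Each summand is then a strictly negative constant times $(\theta-\alpha_c)$, modulo terms either higher order in $(\theta-\alpha_c)$ — hence $o(1)$ relative to the leading term because $|J_\K|\to 0$ — or an honest additive $O(1-\K)$ (coming from $c_\K(\theta)\partial_\theta\psi^\K(\theta)\cdot O(1-\K)$, and from the fact that at $\theta=\alpha_c$ itself $\partial_\theta\psi^\K(\alpha_c+\omega) = c_\K(\alpha_c)(1-2\psi^\K(\alpha_c))\partial_\theta\psi^\K(\alpha_c) = O(1-\K)$). Collecting, $\partial_\theta\psi^\K(\theta+\omega) = -A_1(\K,\theta)(\theta-\alpha_c) + O(1-\K)$ with $A_1(\K,\theta) = -\tfrac14\partial_\theta^2 c_\K(\alpha_c) + 2c_\K(\theta)\partial_\theta\psi^\K(\tilde\theta)\partial_\theta\psi^\K(\theta) + o(1) > 0$; the first term here is $\gtrsim\K\lambda$ and the second is $\lesssim\lambda^2$, so $\tfrac1K < A_1 < K$ for a suitable $K = K(\lambda)$ once $\K$ is close enough to $1$.

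For the second bullet I would differentiate once more: $\partial_\theta\psi^\K(\theta+2\omega) = (\partial_\theta c_\K(\theta+\omega))\,p(\psi^\K(\theta+\omega)) + c_\K(\theta+\omega)\,p'(\psi^\K(\theta+\omega))\,\partial_\theta\psi^\K(\theta+\omega)$. Integrating the first bullet from $\alpha_c$ to $\theta$ and using \eqref{PsiInAlphaC} gives $1 - \psi^\K(\theta+\omega) = O(1-\K) + O((\theta-\alpha_c)^2)$, whence $p(\psi^\K(\theta+\omega)) = O(1-\K) + O((\theta-\alpha_c)^2)$ and, with (e), the first summand is $O(1-\K) + o(\theta-\alpha_c)$. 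In the second summand $c_\K(\theta+\omega)$ is a positive constant bounded away from $0$ and $\infty$; $p'(\psi^\K(\theta+\omega)) = 1 - 2\psi^\K(\theta+\omega) = -1 + O(1-\K) + O((\theta-\alpha_c)^2) = -1 + o(1)$ since $\psi^\K(\theta+\omega)$ is near $1$; and $\partial_\theta\psi^\K(\theta+\omega) = -A_1(\K,\theta)(\theta-\alpha_c) + O(1-\K)$ by the first bullet. Hence $\partial_\theta\psi^\K(\theta+2\omega) = c_\K(\theta+\omega)A_1(\K,\theta)(1+o(1))(\theta-\alpha_c) + O(1-\K) =: A_2(\K,\theta)(\theta-\alpha_c) + O(1-\K)$, with $A_2 = c_\K(\theta+\omega)A_1(\K,\theta)(1+o(1))$ a product of quantities bounded two-sidedly away from $0$ and $\infty$, so $\tfrac1K < A_2 < K$ after possibly enlarging $K$.

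The only real difficulty is the error bookkeeping: one must check that every term other than the leading $(\theta-\alpha_c)$-term is either genuinely $O(1-\K)$ or of the form $(\text{small})\cdot(\theta-\alpha_c)$ — the latter is safe to absorb into $A_1$, resp. $A_2$, precisely because $|J_\K|\to 0$ forces $|\theta-\alpha_c| = o(1)$ — and that the surviving constants $A_1,A_2$ stay strictly between two positive numbers uniformly in $\theta\in J_\K$ and in $\K$ near $1$. This last point is where the quadratic nature of the map enters, through the non-degeneracy $\partial_\theta^2 c_\K(\alpha_c)\neq 0$, together with the strict positivity $\partial_\theta\psi^\K > 0$ on $J_\K$ from \eqref{JDerivativeInequality}.
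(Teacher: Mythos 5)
Your proposal is essentially the paper's own argument: differentiate the invariance relation once for each step, Taylor-expand $\partial_\theta c_\K$, $p(\psi^\K)$, $p'(\psi^\K)$ around $\alpha_c$ using the known facts $\psi^\K(\alpha_c)=\tfrac12+O(1-\K)$, $\partial_\theta c_\K(\alpha_c)=0$, $\partial_\theta^2 c_\K(\alpha_c)\le 0$, and $\partial_\theta\psi^\K\in[\K\lambda^{1/7},\K\lambda]$ on $J_\K$, and absorb all $o(\theta-\alpha_c)$ remainders into the coefficients because $|J_\K|\to 0$. The only cosmetic differences are that you invoke the mean value theorem (giving a $\tilde\theta$) where the paper Taylor-expands at $\alpha_c$ directly, and you replace the paper's explicit $p'(\psi^\K(\theta+\omega))=-(\tfrac38+\K\tfrac58)+O(1-\K)+O(\theta-\alpha_c)$ with the equivalent $-1+o(1)$; both are sound.
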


\begin{proof}
Throughout this entire proof, we will make use of the previous result that $\psi^\K(\alpha_c) = 1/2 + O(1-\K)$ (see \cref{DistanceFromOneHalfForAlphaC}).

Let $\theta +\omega \in J_\K + \omega$ be arbitrary. We have the usual recurrence relation
\begin{align*}
\partial_\theta \psi^\K(\theta + \omega) &= \partial_\theta c_\K(\theta) \cdot p(\psi^\K(\theta)) + c_\K(\theta) \cdot p'(\psi^\K(\theta)) \cdot \partial_\theta \psi^\K(\theta).
\end{align*}
We will analyze each term in detail, starting with
\begin{align*}
\partial_\theta c_\K(\theta) &= \partial_\theta c_\K(\alpha_c) + \partial_\theta^2 c_\K(\alpha_c) (\theta - \alpha_c) + o(\theta - \alpha_c) =\\
&= \partial_\theta^2 c_\K(\alpha_c) (\theta - \alpha_c) + o(\theta - \alpha_c),
\end{align*}
where $\partial_\theta^2 c_\K(\alpha_c) \leq 0$, since $\alpha_c$ is a local maximum for $c_\K$.
\begin{align*}
p(\psi^\K(\theta)) &= p(\psi^\K(\alpha_c)) + p'(\psi^\K(\alpha_c)) \partial_\theta \psi^\K(\alpha_c) (\theta - \alpha_c) + o(\theta - \alpha_c) =\\
&= 1/4 + o(1-\K) + O(1-\K)O(\theta - \alpha_c) + o(\theta - \alpha_c) = 1/4 + o(1-\K) + o(\theta - \alpha_c),
\end{align*}
since $\theta - \alpha_c = o(1)$ as $\K \to 1^-$ (they lie in successively smaller intervals $I_{n(\K)}$).
Putting it together, the effects of the first term is:
\begin{align*}
\partial_\theta c_\K(\theta) p(\psi^\K(\theta)) = (1/4) \partial_\theta^2 c_\K(\alpha_c) (\theta - \alpha_c) + o(1-\K) + o(\theta - \alpha_c).
\end{align*}
The second term can be similarly analyzed, starting with
\begin{align*}
p'(\psi^\K(\theta)) &= p'(\psi^\K(\alpha_c) + \partial_\theta \psi^\K(\alpha_c)(\theta - \alpha_c) + o(\theta - \alpha_c)) =\\
&= 1 - 2(1/2 + O(1-\K) + \partial_\theta \psi^\K(\alpha_c)(\theta - \alpha_c) + o(\theta - \alpha_c) =\\
&= O(1-\K) - 2\partial_\theta \psi^\K(\alpha_c)(\theta - \alpha_c) + o(\theta - \alpha_c).
\end{align*}
Therefore
\begin{align*}
c_\K(\theta) \cdot p'(\psi^\K(\theta)) \cdot \partial_\theta \psi^\K(\theta) = - 2\partial_\theta \psi^\K(\alpha_c)\partial_\theta \psi^\K(\theta)(\theta - \alpha_c) + O(1-\K) + o(\theta - \alpha_c).
\end{align*}
We thus obtain the equality
\begin{align*}
\partial_\theta \psi^\K(\theta + \omega) = (1/4) \partial_\theta^2 c_\K(\alpha_c) (\theta - \alpha_c) - 2\partial_\theta \psi^\K(\alpha_c)\partial_\theta \psi^\K(\theta)(\theta - \alpha_c) + O(1-\K) + o(\theta - \alpha_c),
\end{align*}
or, recalling that $\partial_\theta^2 c_\K(\alpha_c) \leq 0$ and $\partial_\theta \psi^\K(\alpha_c)\partial_\theta \psi^\K(\theta) > \K^2(\lambda^{1/7})^2$, the bounds
\begin{align*}
\partial_\theta \psi^\K(\theta + \omega) = -A_1(\K, \theta) (\theta - \alpha_c) + O(1-\K),
\end{align*}
where $\frac1K < A_1(\K, \theta) < K$, for some $K > 0$, as $\K \to 1^-$.

In the next iteration, for $\theta + 2\omega \in J_\K + 2\omega$, we have
\begin{align*}
\partial_\theta \psi^\K(\theta + 2\omega) &= \partial_\theta c_\K(\theta + \omega) \cdot p(\psi^\K(\theta + \omega)) + c_\K(\theta + \omega) \cdot p'(\psi^\K(\theta + \omega)) \cdot \partial_\theta \psi^\K(\theta + \omega).
\end{align*}
The first term is $O(1-\K) + o(\alpha_c - \theta)$, since
\begin{align*}
p(\psi^\K(\theta + \omega)) &= p(\psi^\K(\alpha_c + \omega)) + p'(\psi^\K(\alpha_c + \omega)) \partial_\theta \psi^\K(\alpha_c + \omega) (\theta - \alpha_c) + o(\theta - \alpha_c) =\\
&= O(1-\K) + (O(\theta - \alpha_c) + O(1-\K))(\theta - \alpha_c) + o(\theta - \alpha_c).
\end{align*}
For the second term, note that
\begin{align*}
p'(\psi^\K(\theta + \omega)) &= p'(\psi^\K(\alpha_c + \omega) + \partial_\theta \psi^\K(\alpha_c + \omega)(\theta - \alpha_c) + o(\theta - \alpha_c)) =\\
&= 1 - 2(\psi^\K(\alpha_c + \omega) + \partial_\theta \psi^\K(\alpha_c + \omega)(\theta - \alpha_c) + o(\theta - \alpha_c) =\\
&= -\psi^\K(\alpha_c + \omega) + O(1-\K) + O(\theta - \alpha_c) =\\
&= -\left( \frac38 + \K\frac58 \right) + O(1-\K) + O(\theta - \alpha_c),
\end{align*}
resulting in (note the cancellation of signs!), by the previous estimate of $\partial_\theta \psi^\K(\theta + \omega)$,
\begin{align*}
\partial_\theta \psi^\K(\theta + 2\omega) &= c(\theta + \omega) (-)\left( \frac38 + \K\frac58 \right) (-)A_1(\K, \theta) (\theta - \alpha_c) + O(1-\K) + o(\theta - \alpha_c) =\\
&= c(\theta + \omega) \left( \frac38 + \K\frac58 \right) A_1(\K, \theta) (\theta - \alpha_c) + O(1-\K) + o(\theta - \alpha_c) =\\
&= A_2(\K, \theta) (\theta - \alpha_c) + O(1-\K),
\end{align*}
where $\frac1K < A_2(\K, \theta) < K$ (for some $K > 0$).
\end{proof}

The lemma below says that the attracting curve is approximately quadratic around $\theta_{\max} + \omega$ (approximately where the global minimum is located).
If we could control the higher derivatives sufficiently well, the proof would have been very straightforward.

\begin{lemma}\label{DifferenceInValuesAfterPeak}
Suppose that $0 \leq \K < 1$ is sufficiently close to 1. Then there is a number $\frac1K < A_3(\K, \theta) < K$ (where $K > 0$),
depending only on $\K$ and $\theta$, such that
\begin{align*}
\psi^\K(\theta + 2\omega) - \psi^\K(\alpha_c + 2\omega) = -A_3(\K, \theta)(\alpha_c - \theta)^2 + o(1-\K),
\end{align*}
for every $\theta + 2\omega \in J_\K + 2\omega$.
\end{lemma}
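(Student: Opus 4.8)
The plan is to read off the quadratic behaviour of $\psi^\K$ near $\alpha_c+2\omega$ directly from the first-order estimate of \cref{DerivativeAfterPeak}, by integration. As the remark preceding the statement indicates, there is no usable control on $\partial_\theta^2\psi^\K$ or $\partial_\theta^3\psi^\K$, so one cannot simply Taylor-expand $\psi^\K(\theta+2\omega)$ to second order in $(\theta-\alpha_c)$ and bound a remainder by a third derivative; instead the ``missing'' second Taylor coefficient is manufactured by integrating the linear-in-$(\theta-\alpha_c)$ expression for $\partial_\theta\psi^\K(\theta+2\omega)$, which is controlled.

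Write $t=\alpha_c-\theta$. Since the intervals $J_\K$ contract to a point as $\K\to1^-$ (\cref{MayChooseSquareRoot}), $\sup_{\theta\in J_\K}|t|\to0$; hence, uniformly over $\theta\in J_\K$, every quantity of the form $(1-\K)\,t$, $t^{3}$, or $t^{4}$ is $o(1-\K)$, and $|t|^{3}=o(t^{2})$. Applying the fundamental theorem of calculus to $s\mapsto\psi^\K(s+2\omega)$ on the segment joining $\alpha_c$ to $\theta$, and inserting the second bullet of \cref{DerivativeAfterPeak} (valid for every $s\in J_\K$), namely $\partial_\theta\psi^\K(s+2\omega)=A_2(\K,s)(s-\alpha_c)+O(1-\K)$ with $\tfrac1K<A_2(\K,s)<K$, gives
\begin{align*}
\psi^\K(\theta+2\omega)-\psi^\K(\alpha_c+2\omega)=\int_{\alpha_c}^{\theta}A_2(\K,s)(s-\alpha_c)\,ds+O\big((1-\K)\,|t|\big).
\end{align*}
Because $A_2$ keeps a fixed sign on $J_\K$, the integral is of exact order $t^{2}$, with a coefficient equal to a mean value of $\tfrac12 A_2$ over the segment, hence pinned between two positive constants; and the remaining term is $o(1-\K)$ uniformly in $\theta\in J_\K$ by the previous observation. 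This is precisely the asserted identity, with $A_3(\K,\theta)$ the resulting coefficient (bounded below by $\tfrac1K$ and above by $K$ after a harmless adjustment of the constant), the sign being that prescribed by \cref{DerivativeAfterPeak}.

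As a consistency check that also identifies the coefficient in closed form, one may instead argue through the map. By invariance $\psi^\K(\theta+2\omega)=c_\K(\theta+\omega)\,p(\psi^\K(\theta+\omega))$, and likewise at $\alpha_c$. For $\theta\in J_\K$ the iterate $\theta+\omega$ lies outside $I_0\cup(I_0+\omega)$, so $c_\K(\theta+\omega)=c_\K(\alpha_c+\omega)+O(\lambda^{-1/2}|t|)$ by \cref{CDerivativeOutsideI0}, with $c_\K(\alpha_c+\omega)\in[\tfrac32,4]$, while $p(\psi^\K(\alpha_c+\omega))=\tfrac58(1-\K)+o(1-\K)$ by \cref{MinimumDistance}; hence the effect of the varying $c$-factor is $O(\lambda^{-1/2}|t|(1-\K))=o(1-\K)$. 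Writing $D=\psi^\K(\theta+\omega)-\psi^\K(\alpha_c+\omega)$ and using the exact quadratic identity $p(y)=p(x)+(1-2x)(y-x)-(y-x)^{2}$ at $x=\psi^\K(\alpha_c+\omega)$, together with $1-2\psi^\K(\alpha_c+\omega)=-1+\tfrac54(1-\K)+o(1-\K)$ (\cref{MinimumDistance}), one finds $p(\psi^\K(\theta+\omega))-p(\psi^\K(\alpha_c+\omega))=-D-D^{2}+O\big((1-\K)|D|\big)$; integrating the first bullet of \cref{DerivativeAfterPeak} controls $D$ to order $t^{2}$, so $D^{2}=O(t^{4})$ and $O((1-\K)|D|)$ are both $o(t^{2})+o(1-\K)$, and multiplying through by $c_\K(\alpha_c+\omega)$ recovers the same conclusion, now with $A_3(\K,\theta)$ expressed through $c_\K(\alpha_c+\omega)$ and the coefficient of $D$ — matching, as $\K\to1^-$, the shape of the constant $c_{\K=1}(\alpha_c+\omega)\cdot\tfrac58$ in \cref{MinimumDistance}.

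The main obstacle — and the reason the statement carries an $o(1-\K)$ error rather than being a one-line Taylor computation — is that the quadratic behaviour has to be extracted while only $\psi^\K$ and $\partial_\theta\psi^\K$ are under control: one must check that the error produced by integrating the $O(1-\K)$ term of \cref{DerivativeAfterPeak}, i.e. $O((1-\K)|t|)$, is genuinely $o(1-\K)$ \emph{uniformly} over $\theta\in J_\K$ (this is exactly where the contraction $|J_\K|\to0$ of \cref{MayChooseSquareRoot} is used), and, in the dynamical cross-check, that the cross terms $(1-\K)t^{2}$, $t^{3}$, $t^{4}$ can all be absorbed either into $o(1-\K)$ or into a bounded perturbation of $A_3$ that preserves the lower bound $A_3>\tfrac1K$.
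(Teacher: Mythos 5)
Your proof is correct and your \emph{primary} route — integrating the linear-in-$(\theta-\alpha_c)$ expression for $\partial_\theta\psi^\K(\,\cdot+2\omega)$ from \cref{DerivativeAfterPeak} via the fundamental theorem of calculus — is genuinely different from the paper's. The paper instead works directly through the map: it decomposes $\psi^\K(\alpha_c+k\omega)-\psi^\K(\theta+k\omega)$ ($k=1,2$) into a $c_\K$-difference term and a $p$-difference term, expands each using the exact quadratic identity for $p$ and the Taylor series of $c_\K$ at $\alpha_c$, and reads off the quadratic coefficient. Your dynamical ``consistency check'' paragraph is essentially the paper's argument, and both arguments ultimately rest on \cref{DerivativeAfterPeak}. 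What your FTC route buys is a cleaner handling of the remainder: instead of invoking a first-order Taylor expansion of $\theta\mapsto\psi^\K(\theta+k\omega)$ with a $o(\theta-\alpha_c)$ error (which, as you correctly observe, cannot be justified by bounding a second derivative of $\psi^\K$), you replace it by the integral of a quantity that is under control pointwise, so that the only error term is the integrated $O(1-\K)$, giving $O((1-\K)|\theta-\alpha_c|)=o(1-\K)$ uniformly because $|J_\K|\to0$. Your through-the-map argument also makes the closed-form identification $A_3(\K,\theta)\approx c_\K(\alpha_c+\omega)\bigl((\partial_\theta\psi^\K(\alpha_c))^2-\partial_\theta^2 c_\K(\alpha_c)\bigr)$ more transparent.

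One thing worth flagging explicitly: your integral $\int_{\alpha_c}^\theta A_2(\K,s)(s-\alpha_c)\,ds$ is \emph{positive} (of order $(\alpha_c-\theta)^2$) regardless of the sign of $\theta-\alpha_c$, so you prove
\begin{align*}
\psi^\K(\theta+2\omega)-\psi^\K(\alpha_c+2\omega)=+A_3(\K,\theta)(\alpha_c-\theta)^2+o(1-\K),
\end{align*}
with $A_3>0$. This is the correct and intended conclusion (the minimum of $\psi^\K$ is at $\alpha_c+2\omega$, so moving away from it $\psi^\K$ must increase; and this is precisely the form used later in the paper when \cref{DifferenceInValuesAfterPeak} is invoked). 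The minus sign in the printed statement, and in the last line of the paper's own proof, is a typo caused by swapping the two sides of the equation without flipping the sign. Since your phrase ``the sign being that prescribed by \cref{DerivativeAfterPeak}'' papers over this, you should state outright that the coefficient you obtain is positive and that this corrects the sign in the lemma as printed.
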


\begin{proof}
We remind ourselves that $\psi^\K(\alpha_c + \omega) = \frac38 + \K\frac58 + o(1-\K)$ (see \cref{PsiInAlphaC}),
and therefore $1 - \psi^\K(\alpha_c + \omega) = \frac58(1 - \K) + o(1-\K) = O(1-\K)$.
We also remind ourselves that $\psi^\K(\alpha_c) = 1/2 + O(1-\K)$ (see \cref{DistanceFromOneHalfForAlphaC}).

We begin by analyzing the differences
\begin{align*}
\psi^\K(\alpha_c + \omega) - \psi^\K(\theta + \omega),
\end{align*}
where $\theta + \omega \in J_\K + \omega$. Now
\begin{align*}
\psi^\K(\alpha_c + \omega) - \psi^\K(\theta + \omega) &= c_\K(\alpha_c) p(\psi^\K(\alpha_c)) - c_\K(\theta) p(\psi^\K(\theta)) =\\
&= c_\K(\theta) \left( p(\psi^\K(\alpha_c)) - p(\psi^\K(\theta)) \right) +\\
&+ \left( c_\K(\alpha_c) - c_\K(\theta) \right) p(\psi^\K(\alpha_c)).
\end{align*}
We know that
\begin{align*}
\psi^\K(\alpha_c) - \psi^\K(\theta) &= \partial_\theta \psi^\K(\alpha_c) (\alpha_c - \theta) + o(\theta - \alpha_c).
\end{align*}
A quick Taylor expansion gives
\begin{align*}
p(y) - p(x) = (1 - 2x)(y - x) - (y - x)^2.
\end{align*}
Now,
\begin{align*}
p(\psi^\K(\alpha_c)) - p(\psi^\K(\theta)) &= (1 - 2\psi^\K(\alpha_c)) \partial_\theta \psi^\K(\alpha_c) (\alpha_c - \theta) +
\left( \partial_\theta \psi^\K(\alpha_c) \right)^2 (\alpha_c - \theta)^2 + o((\alpha_c - \theta)^2) =\\
&= o(1-\K) + \left( \partial_\theta \psi^\K(\alpha_c) \right)^2 (\alpha_c - \theta)^2 + o((\alpha_c - \theta)^2),
\end{align*}
since $(1 - 2\psi^\K(\alpha_c)) = 1 - 2 \cdot (1/2 + O(1-\K)) = O(1-\K)$, and $(\theta - \alpha_c) = o(1)$ as $\K \to 1^-$ (the interval $I_{n(\K)}$ shrinks).
Hence, the first term is
\begin{align*}
c_\K(\theta) \left( p(\psi^\K(\alpha_c)) - p(\psi^\K(\theta)) \right) = c_\K(\theta) \left( \partial_\theta \psi^\K(\alpha_c) \right)^2 (\alpha_c - \theta)^2 + o((\alpha_c - \theta)^2) + o(1-\K).
\end{align*}
Taylor series expansions around $\alpha_c$ yield, since $\partial_\theta c(\alpha) = 0$,
\begin{align*}
c_\K(\alpha_c) - c_\K(\theta) &= - \left( \partial_\theta c_\K(\alpha_c) (\theta - \alpha_c) + \partial^2_\theta c_\K(\alpha_c) (\theta - \alpha_c)^2 + o((\theta - \alpha_c)^2) \right) =\\
&= -\partial^2_\theta c_\K(\alpha_c) (\theta - \alpha_c)^2 + o((\theta - \alpha_c)^2),
\end{align*}
where for some constant $K \leq 0$, $0 \leq -\partial^2_\theta c_\K(\alpha) \leq K$ for all $0 \leq \K < 1$, since $c_\K(\theta)$ has a local maximum at $\alpha$.
Therefore, the total effect is
\begin{align*}
\psi^\K(\alpha_c + \omega) - \psi^\K(\theta + \omega) = K(\theta, \K)(\alpha_c - \theta)^2 + o(1-\K)
\end{align*}
where $K(\theta, \K) = \left( \partial_\theta \psi^\K(\alpha_c) \right)^2 - \partial^2_\theta c_\K(\alpha_c)$ satisfies $\frac1K < K(\theta, \K) < K$ (see \cref{BigDerivative}) for some $K > 0$.
Turning to the next iteration (the one we are interested in), where $\theta + 2\omega \in J_\K + 2\omega$, we have
\begin{align*}
\psi^\K(\alpha_c + 2\omega) - \psi^\K(\theta + 2\omega) &= c_\K(\theta + \omega) \left( p(\psi^\K(\alpha_c + \omega)) - p(\psi^\K(\theta + \omega)) \right) +\\
&+ \left( c_\K(\alpha_c + \omega) - c_\K(\theta + \omega) \right) p(\psi^\K(\alpha_c + \omega)).
\end{align*}
As before
\begin{align*}
\psi^\K(\alpha_c + \omega) - \psi^\K(\theta + \omega) &= \partial_\theta \psi^\K(\theta + \omega) (\alpha_c - \theta) + o(\theta - \alpha_c),
\end{align*}
where $\partial_\theta \psi^\K(\theta + \omega) = -A_1(\K,\theta) \cdot (\theta - \alpha_c) + O(1-\K)$ and $\frac1K < A_1(\K, \theta) < K$ for some $\K > 0$, and
\begin{align*}
p(\psi^\K(\alpha_c + \omega)) - p(\psi^\K(\theta + \omega)) &= \left( 1 - 2\psi(\alpha_c + \omega) \right) \partial_\theta \psi^\K(\theta + \omega) (\alpha_c - \theta) +\\
&+ \left( \partial_\theta \psi^\K(\theta + \omega) \right)^2 (\alpha_c - \theta)^2 + o((\alpha_c - \theta)^2) =\\
&= \left( O(1-\K) - (\frac38 + \K\frac58) \right) \partial_\theta \psi^\K(\alpha_c + \omega) (\alpha_c - \theta) + o((\alpha_c - \theta)^2) =\\
&= (\frac38 + \K\frac58) A_1(\K,\theta) \cdot (\theta - \alpha_c)^2 + o(1-\K) + o((\alpha_c - \theta)^2).
\end{align*}
The first term is therefore equal to
\begin{align*}
- A_3(\K, \theta)(\alpha_c - \theta)^2 + o(1-\K),
\end{align*}
for some $\frac1K < A_3(\K, \theta) < K$ (for some $K > 0$), as we have shown above.
The next term satisfies that $c_\K(\alpha_c + \omega) - c_\K(\theta + \omega) = O(\alpha_c - \theta)$ and $p(\psi^\K(\alpha_c + \omega)) = O(1-\K)$.
Therefore
\begin{align*}
\psi^\K(\alpha_c + 2\omega) - \psi^\K(\theta + 2\omega) &= -A_3(\theta, \K)(\alpha_c - \theta)^2 + o((\alpha_c - \theta)(1-\K)) + o(1-\K),
\end{align*}
or, since $\alpha_c - \theta = o(1)$ as $\K \to 1^-$ (they belong to increasingly smaller intervals $I_{n(\K)}$), 
\begin{align*}
\psi^\K(\theta + 2\omega) - \psi^\K(\alpha_c + 2\omega)&= -A_3(\K, \theta)(\alpha_c - \theta)^2 + o(1-\K),
\end{align*}
where $\frac1K < A_3(\K, \theta) < K$, as above.
\end{proof}

\begin{lemma}
For $\theta \in J_\K + 2\omega$, we have that, for $\K < 1$ sufficiently close to 1
\begin{align}
\max_{\theta \in \{\theta + (3 + k)\omega : \theta \in J, 0 \leq k \leq T_1(\K, \theta)\} } |\partial_\theta \psi^\K(\theta)| = \max_{\theta \in \{\theta + (3 + T_1(\K, \theta)) \cdot \omega\} } |\partial_\theta \psi^\K(\theta)|.
\end{align}
and asymptotically, there is a constant $K > 0$, such that 
\begin{align}
\frac1K \cdot \frac{1}{\sqrt{1-\K}} \leq \max_{\theta \in \{J + (3 + T_1(\K, \theta) \cdot \omega\} } |\partial_\theta \psi^\K(\theta)| \leq K \cdot \frac{1}{\sqrt{1-\K}},\label{DerivativeEstimateAtRecoveryPoint}
\end{align}
as $\K \to 1^-$.
\end{lemma}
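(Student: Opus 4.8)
The plan is to reduce the whole statement to two facts: (a) at the near‑minimum point, the ratio of the derivative of $\psi^\K$ to its value is $\asymp (1-\K)^{-1/2}$ for an optimal $\theta$, and (b) along the subsequent ascent from near $0$, the derivative $|\partial_\theta\psi^\K|$ grows in lock‑step with the value $\psi^\K$ itself, so its maximum over the ascent is at the last ascent step, where $\psi^\K$ is of order $1$.

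First I would fix $\theta\in J_\K$ and set $y_k:=\psi^\K(\theta+(2+k)\omega)$, so $y_0$ is the value at the near‑minimum point; let $T_1=T_1(\K,\theta)$ be the first $k\geq1$ with $y_k\geq\tfrac1{100}$. From \cref{DerivativeAfterPeak}, $\partial_\theta\psi^\K(\theta+2\omega)=A_2(\K,\theta)(\theta-\alpha_c)+O(1-\K)$ with $\tfrac1K<A_2<K$; and combining \cref{DifferenceInValuesAfterPeak} with \cref{MinimumDistance} (including \cref{PsiInAlphaC} and $\delta(\K)\asymp 1-\K$) one gets $y_0=\delta(\K)+A_3(\K,\theta)(\theta-\alpha_c)^2+o(1-\K)$ with $\tfrac1K<A_3<K$, hence $y_0\asymp(1-\K)+(\theta-\alpha_c)^2$. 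By \cref{AscentFromBottom} the orbit stays in $[0,\tfrac1{25}]$ and $y_{k+1}\geq\tfrac54 y_k$ for $0\leq k<T_1$, with $\tfrac1{100}\leq y_{T_1}<\tfrac1{25}$; and $T_1\leq\log_{5/4}\frac1{10(1-\K)}$ by \cref{TimeOfAscent} together with the lower bound $\delta(\K)\geq\tfrac12(1-\K)$ from \cref{MinimumDistance}. Since $y_{k+1}=c(\theta_k)y_k(1-y_k)$, we obtain for $0\leq j<k\leq T_1$
\begin{align*}
\prod_{i=j}^{k-1}c(\theta_i)p'(y_i)=\frac{y_k}{y_j}\prod_{i=j}^{k-1}\frac{1-y_i}{1-2y_i}=:\frac{y_k}{y_j}\,\Xi_{j,k},\qquad 1\leq\Xi_{j,k}\leq e^{2\sum_i y_i}\leq e^2,
\end{align*}
the last bound because the ascent is geometric, so $\sum_i y_i<1$.

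Next I would substitute this into the iterated derivative formula \eqref{IteratedPartialDerivatives}: with $\partial_\theta y_{T_1}=\partial_\theta y_0\prod_{i=0}^{T_1-1}c(\theta_i)p'(y_i)+\sum_{j=1}^{T_1}\partial_\theta c(\theta_{j-1})p(y_{j-1})\prod_{i=j}^{T_1-1}c(\theta_i)p'(y_i)$, the leading term equals $\partial_\theta y_0\cdot\frac{y_{T_1}}{y_0}\Xi_{0,T_1}\asymp\frac{|\theta-\alpha_c|+O(1-\K)}{(1-\K)+(\theta-\alpha_c)^2}$, while each noise term is at most $|\partial_\theta c(\theta_{j-1})|\,p(y_{j-1})\,\tfrac{y_{T_1}}{y_j}\Xi_{j,T_1}\leq C_0\,|\partial_\theta c(\theta_{j-1})|$ (using $p(y_{j-1})=y_j/c(\theta_{j-1})\leq\tfrac23 y_j$ and $y_{T_1}<\tfrac1{25}$). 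Bounding $\|\partial_\theta c\|\leq\lambda$ throughout $\Torus$, using $|\partial_\theta c|<\lambda^{-1/2}$ off $I_0\cup(I_0+\omega)$ (\cref{CDerivativeOutsideI0}), and using \cref{DiophReturnTime} to see that at most $2(\lfloor T_1/N_0\rfloor+1)$ of the indices $j$ have $\theta_{j-1}\in I_0\cup(I_0+\omega)$ (where $N_0$ is the return time to $I_0$), the total noise is $\lesssim\lambda^{-1/2}T_1+\lambda T_1/N_0+\lambda\lesssim\lambda\log\frac1{1-\K}=o\big((1-\K)^{-1/2}\big)$. The estimate $|\partial_\theta\psi^\K(\theta+(2+T_1)\omega)|\asymp(1-\K)^{-1/2}$ at the recovery point then follows: for the upper bound use $(1-\K)+(\theta-\alpha_c)^2\geq2\sqrt{1-\K}\,|\theta-\alpha_c|$, giving $\frac{K|\theta-\alpha_c|+C(1-\K)}{(1-\K)+(\theta-\alpha_c)^2}\leq\frac{C'}{\sqrt{1-\K}}$ uniformly in $\theta\in J_\K$; for the lower bound, pick $\theta\in J_\K$ with $|\theta-\alpha_c|\asymp\sqrt{1-\K}$ — possible since $|J_\K|\geq\tfrac45(\sqrt{1-\K})^{1/\eta}\geq\tfrac45\sqrt{1-\K}$ by \cref{MayChooseSquareRoot} — which makes $y_0\asymp1-\K$ and $\partial_\theta y_0\asymp\sqrt{1-\K}$ of a definite sign, so the leading term is $\gtrsim(1-\K)^{-1/2}$ and dominates the noise.

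For the first displayed equality, running the same computation with $T_1$ replaced by an arbitrary $k\leq T_1$ shows $|\partial_\theta y_k|$ equals $\asymp y_k$ up to the same uniformly small noise, hence is essentially increasing; to make this an honest equality I would invoke \cref{DerivativeGrowsDuringExpansion}: once $|\partial_\theta y_k|$ exceeds its threshold constant and $y_k\leq\tfrac1{100}$, the sequence $|\partial_\theta y_k|$ is strictly increasing, so on the ascent it can have no interior maximum above that fixed level; since $|\partial_\theta y_{T_1}|\asymp(1-\K)^{-1/2}$ exceeds it by far, the maximum over $0\leq k\leq T_1$ is forced to occur at $k=T_1$, and taking the maximum over $\theta\in J_\K$ as well gives the stated identity. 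I expect the one genuinely delicate point to be the noise term in the regime where $\K$ is so close to $1$ that $T_1(\K)>N_0$ and the ascent actually re‑enters the peak region: there one argues along the two cases sketched before this lemma — either the working scale $I_{n(\K)}$ may be replaced by the coarser $I_{n(\K)-1}$, so that any revisit occurs far out on the peak and the favourable return estimates of \cref{DiophReturnTime} and \cref{LocalControlOnProducts} apply, or else $n(\K)$ changed long ago and $(1-\K)^{-1/2}$ is already large enough to absorb the bounded extra expansion accumulated at the revisited peak.
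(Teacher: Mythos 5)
Your proposal is correct and takes essentially the same route as the paper: identify the leading term $\partial_\theta\psi^\K(\theta+2\omega)\cdot\prod c\,p' \asymp (\theta-\alpha_c)/\bigl((1-\K)+(\theta-\alpha_c)^2\bigr)$, show the sum of the remaining terms is $o((1-\K)^{-1/2})$, optimize over $|\theta-\alpha_c|\asymp\sqrt{1-\K}$ inside $J_\K$, and use \cref{DerivativeGrowsDuringExpansion} to pin the maximum of the ascent at the recovery time. The one cosmetic slip is the telescoping identity: since $c(\theta_i)p'(y_i)=\tfrac{y_{i+1}}{y_i}\cdot\tfrac{1-2y_i}{1-y_i}$, the correction factor is $\prod\tfrac{1-2y_i}{1-y_i}\in[e^{-2},1]$, not its reciprocal; the two-sided bound $\prod c\,p'\asymp y_k/y_j$ that you actually use is unaffected. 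The main structural difference is that you re-derive, inline, the content of the paper's \cref{MultiplierAndDerivativeAreSimilar,EachIterationAtBottomAlmostLikeDerivative,SumProductsSmallAfterPeak} rather than citing them, and your accounting of the noise via \cref{DiophReturnTime} (splitting the $j$ with $\theta_{j-1}\in I_0\cup(I_0+\omega)$ from the rest) is somewhat sharper than the paper's blunt $\|\partial_\theta c\|\cdot N$ bound in \cref{SumProductsSmallAfterPeak}, though both land at $O(\lambda\log\tfrac1{1-\K})=o((1-\K)^{-1/2})$.
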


\begin{proof}
Let $\theta_0 \in J_\K + 2\omega \supseteq I_n + 3\omega$, and set $x_0 = \psi^\K(\theta_0)$. By \cref{DifferenceInValuesAfterPeak}
\begin{align*}
\partial_\theta x_0 = A_2(\K, \theta) (\theta - \alpha_c) + O(1-\K),
\end{align*}
and by \cref{DerivativeAfterPeak}
\begin{align*}
x_0 = \psi^\K(\alpha_c + 2\omega) + A_3(\K, \theta) (\alpha_c - \theta)^2 + o(1-\K).
\end{align*}
Since $\psi^\K(\alpha_c + 2\omega) = K(\K) (1-\K)$, where $\frac1K < K(\K) < K$ (for some $K > 0$), this gives us
\begin{align*}
x_0 = K(\K)(1-\K) + A_3(\K, \theta)(\alpha_c - \theta)^2 + o(1-\K).
\end{align*}
Let $\theta - \alpha_c = L \cdot \sqrt{1-\K}$. By \cref{MayChooseSquareRoot}, it is possible to choose $L$ close to 1. Thus, we have
\begin{align*}
\partial_\theta x_0 = L \cdot A_2(\K, \theta) \sqrt{1 - \K} + o(\sqrt{1-\K}),
\end{align*}
since $O(1-\K) = o(\sqrt{1-\K})$, and
\begin{align*}
x_0 = K(\K)(1-\K) + A_3(\K, \theta) \cdot L^2 \cdot (1-\K) + o(\sqrt{1-\K}).
\end{align*}
Now, by \cref{EachIterationAtBottomAlmostLikeDerivative}, there are constants $0 < D_1 \leq D_2$ such that
\begin{align*}
D_1 \cdot \frac1{x_0} \leq \prod_{k=0}^{T_1(\K, \theta_0)} c(\theta_k) \cdot p'(x_k) = D_2 \cdot \frac1{x_0}.
\end{align*}
Hence, for some $\epsilon > 0$, suppressing the dependence on parameters in the notation of $K, A_2, A_3$,
\begin{align*}
D_1\frac{L \cdot A_2 + \epsilon(\K)}{K + A_3 \cdot L^2 + \epsilon(\K)} \cdot \frac{\sqrt{1-\K}}{1-\K} \leq
|\partial_\theta x_0 \cdot \prod_{k=0}^{T_1(\K, \theta_0)} c(\theta_k) \cdot p'(x_k)| \leq
D_2\frac{L \cdot A_2+ \epsilon(\K)}{K + A_3 \cdot L^2 + \epsilon(\K)} \cdot \frac{\sqrt{1-\K}}{1-\K},
\end{align*}
where $\epsilon(\K) \to 0$ as $\K \to 1^-$.
If $L$ is very big, then $L^2$ would dominate the denominator, and we would have
\begin{align*}
\frac{L \cdot A_2 + \epsilon}{K + A_3 \cdot L^2 + \epsilon} \sim \frac1L.
\end{align*}
If $L$ is very small, then $K$ would dominate the denominator, and we would have
\begin{align*}
\frac{L \cdot A_2 + \epsilon}{K + A_3 \cdot L^2 + \epsilon} \sim L.
\end{align*}
Hence, the the maximum would be obtained if we choose $L$ like $L \sim 1$.

By \cref{SumProductsSmallAfterPeak},
\begin{align*}
\sum \limits_{k=0}^{N-1} \partial_\theta c(\theta_k) \cdot p(x_k) \cdot \prod \limits_{j=k+1}^{N} c(\theta_j) \cdot p'(x_j) = o(x_0^\gamma) = o((1-\K)^\gamma),
\end{align*}
for every $\gamma < 0$. Hence, the derivative will be like
\begin{align*}
const + const_1 \frac{1}{\sqrt{1-\K}} + o(1-\K) \leq |\partial_\theta x_{T_1(\K, \theta_0)}| \leq const + const_2 \frac{1}{\sqrt{1-\K}} + o(1-\K).
\end{align*}
Once the derivative has grown to a certain point, it will grow monotonically (see \cref{DerivativeGrowsDuringExpansion}).
Therefore, as $\K$ gets closer to 1, the derivative must grow past this point, and the maximum would be attained for $|\partial_\theta x_{T_1(\K, \theta_0)}|$.
\end{proof}

This is a good time to remind ourselves that the integers $N_n$ satisfy $\theta_0 \in I_n \Rightarrow \theta_i \not\in I_n$ for $0 \leq i < N_n$. 

\begin{prop}\label{DerivativeGrowth}
Suppose that $0 \leq \K < 1$. Asymptotically, there is a constant $K > 0$, such that 
\begin{align*}
\frac1K \cdot \frac{1}{\sqrt{1-\K}} \leq \max \limits_{\theta \in \mathbb{T}} |\partial_\theta \psi^\K(\theta)| \leq K \cdot \frac{1}{\sqrt{1-\K}},
\end{align*}
as $\K \to 1^-$.
\end{prop}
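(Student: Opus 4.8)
The lower bound costs nothing. The supremum over $\mathbb{T}$ dominates the supremum over the ascent set $\{\theta+(3+T_1(\K,\theta))\omega:\theta\in J_\K\}$, so \cref{DerivativeEstimateAtRecoveryPoint} of the preceding lemma already gives $\max_{\theta\in\mathbb{T}}|\partial_\theta\psi^\K(\theta)|\ge\tfrac1K(1-\K)^{-1/2}$ for all $\K$ close enough to $1$.

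For the upper bound the plan is to show that $|\partial_\theta\psi^\K|$ is bounded by a fixed multiple of $(1-\K)^{-1/2}$ at every point of $\mathbb{T}$, the large values occurring only on, and just after, the torus-collision chain, where the preceding lemma already controls them. Fix $\K$ close to $1$ and write $n=n(\K)$. Since $\psi^\K$ is $C^\infty$ (\cref{SmoothnessOfAttractor}) and the orbit of $\alpha_c$ under $\theta\mapsto\theta+\omega$ is dense, it suffices to bound $|\partial_\theta\psi^\K(\theta_k)|$ uniformly in $k$ along $\theta_k=\alpha_c+k\omega$, using the recurrence
\begin{align*}
\partial_\theta\psi^\K(\theta_{k+1})=\partial_\theta c(\theta_k)\,p(\psi^\K(\theta_k))+c(\theta_k)\,p'(\psi^\K(\theta_k))\,\partial_\theta\psi^\K(\theta_k).
\end{align*}
I would split the iterates into three groups. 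On the \emph{chain} itself ($\theta_k$ in $I_n$, then $I_n+\omega$, then $J_\K+2\omega$, $J_\K+3\omega$, and the remaining pieces of $(I_0+2\omega)\cup(I_0+3\omega)$) the derivative stays $O(1)$: \cref{BigDerivative} gives $|\partial_\theta\psi^\K|\le\lambda^{-1/4}+\epsilon(n)$ on $I_n$ and $\le\K\lambda$ on $I_n+\omega$, \cref{DerivativeAfterPeak} gives $O(1)$ on $J_\K+\omega$ and $J_\K+2\omega$, and the sharpness of the peak at $\alpha_c$ exploited in \cref{MinimumDistance} keeps $\psi^\K$ a fixed distance below $1$ on $(I_0\setminus I_n)+2\omega$, hence the derivative $O(1)$ there and on the next iterate. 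On the \emph{ascent} ($\theta_k=\theta'+(3+j)\omega$, $\theta'\in J_\K$, $0\le j\le T_1(\K,\theta')$) the preceding lemma bounds $|\partial_\theta\psi^\K(\theta_k)|$ by $K(1-\K)^{-1/2}$, and by \cref{DerivativeGrowsDuringExpansion} the maximum along the ascent is attained at its right endpoint. Finally, on the \emph{quiet} part — $\theta_k\in\Theta_{n-1}$ with $\psi^\K(\theta_k)\in C$, together with the $O(1)$-length recovery steps after an ascent and the bounded excursions through $I_0\cup(I_0+\omega)$ that do not set up the chain — the contraction $|c\,p'|<3/5$ of \cref{CContractionLemma} and the bound $|\partial_\theta c|<\lambda^{-1/2}$ off $I_0\cup(I_0+\omega)$ of \cref{CDerivativeOutsideI0}, inserted into the product estimates of \cref{ZoomInductionStep} exactly as in the proofs of \cref{SmallIteratedDerivatives,DerivativeBoundBeforePeak}, show that $|\partial_\theta\psi^\K(\theta_k)|$ is at most a fixed multiple of the value at the most recent ascent endpoint in the backward orbit, which is $\le K(1-\K)^{-1/2}$; the $O(1)$ additive kicks picked up on $I_0\cup(I_0+\omega)$ (where $|\partial_\theta c|\le\K\lambda$) are absorbed by the contraction that follows within $20$ iterations (\cref{20IterationsToC,IfInCThenFirstPeakDoesLittle,TwoStepsAfterEntry}).

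The point that needs care — and the step I expect to be the main obstacle — is the last one: ensuring that a derivative amplified to size $(1-\K)^{-1/2}$ on one chain passage is genuinely crushed before it can be re-amplified on the next, so that the ``most recent ascent endpoint'' bound above is not defeated by a build-up over many passages. Here the Diophantine hypothesis enters through \cref{DiophReturnTime}: after the recovery the orbit sits in $\Theta_{n-1}\times C$ and cannot return to $I_n$ for at least $N_n=[(\kappa(5/4)^{K_{n-1}})^{1/\tau}]$ steps, and since $2K_{n-1}-2<T_1(\K)\le\log_{5/4}\tfrac1{10(1-\K)}$ while $K_n\ge\tfrac12 M_C(\K)$, one checks that $N_n$ grows at least like a fixed positive power of $\log\tfrac1{1-\K}$; the accumulated contraction $(3/5)^{N_n/2}$ therefore decays faster than any power of $1-\K$, which dwarfs the merely polynomial-in-$(1-\K)^{-1}$ amplification of one chain-plus-ascent passage (the telescoping of $\prod c(\theta_k)(1-x_k)$ over the ascent, a factor $\asymp(1-\K)^{-1}$, controlled by \cref{LocalControlOnProducts} and the product-and-sum estimates underlying the preceding lemma). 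Hence the derivative entering each successive passage is $O(1)$, no passage exceeds the bound $K(1-\K)^{-1/2}$ supplied by the preceding lemma, and taking the supremum over $k$ and invoking density and continuity of $\partial_\theta\psi^\K$ yields $\max_{\theta\in\mathbb{T}}|\partial_\theta\psi^\K(\theta)|\le K(1-\K)^{-1/2}$, completing the proof.
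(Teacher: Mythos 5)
Your lower bound is fine and matches the paper's. The overall strategy for the upper bound — start from the recovery point $\theta+(3+T_1(\K,\theta))\omega$, where the preceding lemma pins the derivative at $\asymp(1-\K)^{-1/2}$, and then propagate forward along the orbit using product bounds until the next visit to $J_\K$ — is also the one the paper takes. But there is a genuine gap in your "quiet part", and it is not a small one.

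You write that the ``$O(1)$ additive kicks picked up on $I_0\cup(I_0+\omega)$ \dots are absorbed by the contraction that follows within $20$ iterations'' and that consequently $|\partial_\theta\psi^\K(\theta_k)|$ is ``at most a fixed multiple'' of the value at the most recent ascent endpoint. Neither claim survives a closer look. After the recovery point and before the next return to $J_\K$, the orbit $\theta_k$ will re-enter the intervals $I_j$ for $j$ up to some scale; when it enters $I_j\setminus I_{j+1}$ the excursion does not resolve in $20$ iterations but takes on the order of $2K_j+20$ iterations (this is exactly what $(iv)_n$ of \cref{ZoomInductionStep} and the sets $G_n$ encode), and during this excursion the running product $\prod|c\,p'|$ can climb to about $4^{O(K_j)}$ before the subsequent contraction reasserts itself. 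This is precisely what \cref{LocalControlOnProducts} quantifies with the prefactor $4^{4K_m}$. So the ``multiple'' is not fixed: it is $4^{O(K_m)}$, which blows up as $\K\to1^-$. Controlling it is the crux of the proof, and you have not addressed it.

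The paper's device for this is the two-case choice of the scale $m$: if $T_1(\K)\le (K_{n-1})^{3/2}(2K_{n-1}-2)$ one takes $m=n-2$ (and shows $J_\K\supseteq I_{n-1}$), otherwise $m=n-1$. In either case one gets $K_m^{5/2}<T_1(\K)\ll K_{m+1}^3$ (equation \cref{T1Bounds}), hence $K_m<T_1(\K)^{2/5}$, and since $T_1(\K)\asymp\log\frac1{1-\K}$ this makes $4^{7K_m}=o\bigl((1-\K)^{-\epsilon}\bigr)$ for every $\epsilon>0$. Without that dichotomy, the only generic bound is $K_{n-1}<T_1(\K)/2$, and $4^{7K_{n-1}}$ can then be as large as $(1-\K)^{-c}$ for a constant $c$ far exceeding $1/2$ — fatal to the upper bound. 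Your sketch omits both the choice of $m$ and the quantitative role of $T_1(\K)^{2/5}$, so the absorption argument you invoke does not close. (Relatedly, your statement that $N_n$ grows ``at least like a fixed positive power of $\log\frac1{1-\K}$'' is too weak to conclude super-polynomial decay of $(3/5)^{N_n/2}$ unless the power exceeds $1$; the actual growth is a positive power of $(1-\K)^{-1}$, so the conclusion is true, but you need to say more than you do.) To repair the proposal you would replace the ``absorbed within $20$ iterations'' step by the combination of the two-case choice of $m$, \cref{LocalControlOnProducts} for the product bound $4^{7K_m}(3/5)^{(1-1/M_0)(k-t)/2}$, and the resulting estimate $4^{7K_m}=o\bigl((1-\K)^{-1/2}\bigr)$, which is what lets the bound at the recovery point survive propagation to the rest of the orbit.
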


\begin{proof}
Let $0 \leq \K < 1$ be given, and set $n = n(\K)$, $J = J_\K$.

Recall the definition of $T_1(\K)$ given in \cref{DefinitionOfT1}.
Suppose that $2K_{n-1} - 2 < T_1(\K) \leq (K_{n-1})^{3/2}(2K_{n-1} - 2)$.
Then $T_1(\K) \ll K_{n-1}^3 \sim (N_{m+1})^{3/4}$, and \cref{BigDerivative} implies that $I_{n-1} \subseteq J$.
In this case, set $m = n - 2$, to get
\begin{align*}
K_m^{5/2} \ll K_{m+1} < T_1(\K) \ll K_{m+1}^3 \sim (N_{m+1})^{3/4}.
\end{align*}

Otherwise, if $(K_{n-1})^{3/2}(2K_{n-1} - 2) < T_1(\K) \leq 2K_n - 2$, set $m = n - 1$. By our choice of $m$
\begin{align}
K_m^{5/2} < T_1(\K) \ll K_{m+1}^3 \sim (N_{m+1})^{3/4},\label{T1Bounds}.
\end{align}

Let $\{J + k\omega\}_{k = 0}^M$ be a minimal (in the sense that $M > 0$ is the smallest possible) cover of $\mathbb{T}$.

We know that
\begin{align*}
\max_{\theta \in \{\theta + (3 + k)\omega : \theta \in J, 0 \leq k \leq T_1(\K, \theta)\} } |\partial_\theta \psi^\K(\theta)| =
\max_{\theta \in \{\theta + (3 + T_1(\K, \theta)) \omega\ : \theta \in J\} } |\partial_\theta \psi^\K(\theta)|.
\end{align*}
Therefore, the parts of the cover where we have no control this far is
\begin{align*}
\{\theta + (3 + T_1(\K, \theta) + k) \omega : \theta \in J, 1 \leq k \leq M - 3 + T_1(\K, \theta) \}.
\end{align*}

Pick a $\theta_0 = \theta + (3 + T_1(\K, \theta)) \omega$, where $\theta \in J$. Set $T_1 = T_1(\K, \theta)$ and $x_0 = \psi^\K(\theta_0)$. Suppose that $t \geq 0$ is the smallest integer satisfying
\begin{align*}
x_{t} \in C.
\end{align*}
We wish to get an upper bound on $t$. There are two possibilities; either $\theta_0 \in I_0 \cup (I_0 + \omega)$, or it's not.
In the case $\theta_0 \in I_0 \cup (I_0 + \omega)$, suppose that $\theta_0 \in I_k \backslash I_{k+1} \cup (I_k \backslash I_{k+1} + \omega)$,
where necessarily $k \leq m$ since $T_1 \ll (N_{m+1})^{3/4} < N_{m+1}$.
Then \cref{QuickReturnFromWorstToGood} implies that $x_{2K_k + 20} \in C$, and therefore $t \leq 2K_m + 20$.

In the case $\theta_0 \not\in I_0 \cup (I_0 + \omega)$, there are two possibilities; either $x_t \in C$ for $t \leq 20$, or $\theta_i \in I_0$ for some $i < 20$.
This follows since $\theta_0, \dots, \theta_{19} \not\in I_0 \cup (I_0 + \omega)$ implies that $x_{20} \in C$, by \cref{20IterationsToC}.
Suppose then that $t > 20$, i.e. that $\theta_i \in I_0$, for some $i < 20$, say $\theta_i \in I_k \backslash I_{k+1}$ where $k \leq m$.
It follows that $x_{i + 2K_k + 20} \in C$, or $t \leq i + 2K_k + 20 \leq 2K_m + 39$.

Thus, we obtain the upper bound $t < 3K_m$ on the smallest $t > 0$ satisfying $x_t \in C$. We are now in a position to invoke \cref{LocalControlOnProducts} for $x_t \in C$.
As long as $k \leq N(\theta_0; J)$, this gives us the estimates
\begin{align*}
\prod \limits_{i = 0}^{k-1} |c(\theta_i) \cdot p'(x_i)| = \prod \limits_{i = j}^{t-1} |c(\theta_i) \cdot p'(x_i)| \prod \limits_{i = t}^{k-1} |c(\theta_i) \cdot p'(x_i)| \leq 4^{3K_m} \cdot 4^{4K_m} \cdot (3/5)^{\left(1 - \frac1{M_0} \right)(k-t)/2},
\end{align*}
when $0 \leq j < t$, and
\begin{align*}
\prod \limits_{i = j}^{k-1} |c(\theta_i) \cdot p'(x_i)| \leq \cdot 4^{4K_m} \cdot (3/5)^{\left(1 - \frac1{M_0} \right)(k-j)/2},
\end{align*}
when $t \leq j < k$. Now,
\begin{align*}
\sum \limits_{j = 1}^{k-1} \prod \limits_{i = j}^{k-1} |c(\theta_i) \cdot p'(x_i)| &\leq 4^{7K_m} \cdot \sum \limits_{j = 1}^{k-1} (3/5)^{\left(1 - \frac1{M_0} \right)(k-j)/2} \leq\\
&\leq 4^{7K_m} \cdot \sum \limits_{j = 1}^{\infty-1} (3/5)^{\left(1 - \frac1{M_0} \right)j/2} = 4^{7K_m} \cdot A\\
\end{align*}
where $A > 0$ is some constant, as long as $k \leq N(\theta_0; J)$.

Since $K_m < T_1(\K)^{2/5}$ (see \cref{T1Bounds}), we get $4^{7K_m} = O(4^{2T_1(\K)/5}) = O(\frac1{(1-\K)^{2/5}}) = o(\frac1{\sqrt{1-\K}})$. Therefore
\begin{align*}
|\partial_\theta x_k| &\leq \|\partial_\theta c\| + |\partial_\theta x_0| \cdot \prod \limits_{i=0}^{k - 1} |c(\theta_i) \cdot p'(x_i)| +\\
&+ \|\partial_\theta c\| \sum \limits_{j = 1}^{k-1} \prod \limits_{i = j}^{k-1} |c(\theta_i) \cdot p'(x_i)| \leq\\
&\leq \|\partial_\theta c\| \left( 1 + 4^{7K_m} \cdot A \right) + |\partial_\theta x_0| \cdot 4^{7K_m} \cdot (3/5)^{\left(1 - \frac1{M_0} \right)(k-3K_m)/2} \leq\\
&\leq |\partial_\theta x_0| \cdot o\left( \frac1{\sqrt{1-\K}} \right) + const,
\end{align*}
where the constant satisfies $const = o(\frac1{\sqrt{1-\K}})$ as $\K \to 1^-$, and therefore is negligible.
Since we already have the bounds on $|\partial_\theta x_0|$ in \cref{DerivativeEstimateAtRecoveryPoint}, this gives us the asymptotic inequality
\begin{align*}
\frac1K \cdot \frac{1}{\sqrt{1-\K}} \leq \max \limits_{\theta \in \mathbb{T}} |\partial_\theta \psi^\K(\theta_k)| \leq K \cdot \frac{1}{\sqrt{1-\K}},
\end{align*}
where $K > 0$ as $\K \to 1^-$, as long as $k \leq N(\theta_0, J)$. When $k = N(\theta_0; J)$, we are back in an interval, $J$, where we already know the derivative,
and the derivative of its iterates. We may therefore terminate the process at this point.
\end{proof}

\section*{Acknowledgement}
I want to thank Kristian Bjerklöv for our many valuable discussions during the conception of this article.
This research was partially supported by a Swedish Research Council grant.

\appendix
\section{Some technical lemmas}
In the appendix, we will fix $\K$, and write $c = c_\K$. All the constants are independent of $\K \in [0,1]$, or can be chosen to be independent for these $\K$.
\begin{lemma}\label{MultiplierAndDerivativeAreSimilar}
Suppose that $0 < x_0 < \frac1{100}$, and that $N \geq 0$ is the smallest integer satisfying $\frac1{100} \leq x_{N+1}$. Then
\begin{align*}
\prod \limits_{k=0}^N c(\theta_k) \cdot p'(x_k) = C_N \prod \limits_{k=0}^N c(\theta_k)(1-x_k),
\end{align*}
where $C_N \downarrow C^* > 0$ as $N \to \infty$ (i.e. $x_0 \to 0$).
\end{lemma}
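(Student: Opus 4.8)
The plan is as follows. First, observe that $p'(x)=1-2x$, so the asserted factorisation is merely the definition
\[C_N:=\prod_{k=0}^{N}\frac{p'(x_k)}{1-x_k}=\prod_{k=0}^{N}\frac{1-2x_k}{1-x_k};\]
the point being that minimality of $N$ forces $0<x_k<\tfrac1{100}$ for all $0\le k\le N$, so each factor is a decreasing function of $x_k$ taking values in $\bigl(\tfrac{98}{99},1\bigr)$ and hence $0<C_N<1$ for free. I would also record the telescoping identity $\prod_{k=0}^{N}c(\theta_k)(1-x_k)=x_{N+1}/x_0$ (immediate from $x_{k+1}=c(\theta_k)x_k(1-x_k)$), since that is what makes this lemma useful later (it turns $\prod_{k=0}^{N}c(\theta_k)p'(x_k)=C_N\,x_{N+1}/x_0$ into $\asymp 1/x_0$).

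The substantive step is a positive lower bound for $C_N$ that is \emph{uniform} in $x_0$ and in $\K$. Using \cref{AscentFromBottom} I would get $x_{k+1}\ge\tfrac54 x_k$ for $0\le k\le N$, hence $x_{N-j}\le(4/5)^{\,j+1}x_{N+1}$ for $0\le j\le N$; since $x_{N+1}=c(\theta_N)x_N(1-x_N)<\tfrac4{100}$, summing a geometric series gives $\sum_{k=0}^{N}x_k<4x_{N+1}<\tfrac15$. Then, feeding the elementary inequality $\log(1-y)\ge-\tfrac{y}{1-y}$ into $\log C_N=\sum_k\log\!\bigl(1-\tfrac{x_k}{1-x_k}\bigr)$ with $y=\tfrac{x_k}{1-x_k}$,
\[\log C_N\ \ge\ -\sum_{k=0}^{N}\frac{x_k}{1-2x_k}\ \ge\ -\frac{100}{98}\sum_{k=0}^{N}x_k\ >\ -\frac{10}{49},\]
so $C_N>e^{-10/49}=:C^{*}>0$. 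Combined with $C_N<1$, this two-sided bound $C^{*}<C_N<1$ is exactly what later applications of the lemma require.

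For the monotone decrease I would exploit the backward dynamics of the skew product: given $(\theta_0,x_0)$ with $x_0<\tfrac1{100}$, set $\theta_{-1}=\theta_0-\omega$ and let $x_{-1}\in(0,\tfrac12)$ be the root of $c(\theta_{-1})t(1-t)=x_0$ near $0$ (well defined since $4x_0/c(\theta_{-1})<1$, and one checks $x_{-1}<x_0<\tfrac1{100}$). Since $\Phi(\theta_{-1},x_{-1})=(\theta_0,x_0)$, the orbit of $(\theta_{-1},x_{-1})$ is that of $(\theta_0,x_0)$ shifted by one step, so its escape index equals $N+1$ and
\[C_{N+1}=\frac{1-2x_{-1}}{1-x_{-1}}\,C_N<C_N.\]
Iterating produces a backward orbit $(\theta_{-m},x_{-m})_{m\ge0}$ with $x_{-m}\downarrow 0$ along which $C_N$ is strictly decreasing and stays above $C^{*}$; hence $C_N\downarrow C^{*}>0$ (equivalently $C^{*}=C_N\prod_{m\ge0}\tfrac{1-2x_{-m}}{1-x_{-m}}$, a convergent product by the summability just proved).

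The delicate point — the one I expect to cost the most care — is precisely this convergence/monotonicity claim, because for a fixed large $N$ the near-escape iterates $x_{N-j}$, which are not negligible in the product, depend on $\theta_N=\theta_0+N\omega$ and on the exact value of $x_{N+1}\in[\tfrac1{100},\tfrac1{25}]$, so $C_N$ cannot be expected to converge along every sequence $x_0\to0$. The robust content to aim for is therefore the uniform bound $C^{*}<C_N<1$ together with the strict decrease along backward orbits above, which is what the rest of the paper actually uses; the remaining ingredients (the estimate $\sum x_k<\tfrac15$, the bound on $\log(1-y)$, and the one-step preimage computation) are routine.
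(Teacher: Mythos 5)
Your proof is correct and follows essentially the same route as the paper: you peel off the factor $C_N=\prod_{k=0}^{N}\frac{1-2x_k}{1-x_k}$, note $C_N<1$ for free, and then use the geometric ascent $x_{k+1}\ge\frac54 x_k$ from Lemma 3.2 to bound $\sum_{k=0}^N x_k$ by a constant independent of $x_0$ and $\K$, which yields a uniform positive lower bound on $C_N$. The paper gets that lower bound via Rudin's infinite-product inequality and you get it via the elementary bound on $\log(1-y)$; the geometric-series estimate driving both is the same, and both give an explicit constant. Where you go beyond the paper is the monotone decrease: the paper simply asserts $C_N\downarrow C^*$ after establishing the uniform bound, whereas you supply the backward-orbit construction (pulling back to a preimage $x_{-1}<x_0<\frac1{100}$ whose escape index is $N+1$, so that the new $C_{N+1}$ equals the old $C_N$ times one further factor $<1$) that actually makes the decrease precise. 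You are also right to flag that, read literally, convergence of $C_N$ along arbitrary sequences $x_0\to0$ is not guaranteed, since for fixed large $N$ the near-escape iterates depend on $\theta_0+N\omega$ and on the landing value $x_{N+1}\in[\frac1{100},\frac1{25}]$; the robust content that Lemmas A.2 and A.3 actually consume is the two-sided bound $0<C^*\le C_N\le 1$, which both your argument and the paper's establish.
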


\begin{proof}
We will use \cite[Lemma 15.3]{RudinComplex}, and use the same notation as there.
Since $p'(x) = 1 - 2x$, we see that
\begin{align*}
1 \geq C_N = \frac{\prod \limits_{k=0}^N c(\theta_k) \cdot p'(x_k)}{\prod \limits_{k=0}^N c(\theta_k)(1-x_k)} = \prod \limits_{k=0}^N \frac{1 - 2x_k}{1 - x_k} = \prod \limits_{k=0}^N (1 - \frac{x_k}{1 - x_k}) \geq \prod \limits_{k=0}^N (1 - \gamma x_k) = C_N^* > 0,
\end{align*}
where $1 \leq \gamma \leq \frac1{1 - \max_{0 \leq k \leq N}x_k} = \frac1{1 - x_N}$. We now have that
\begin{align*}
|1 - C_N| \leq C_N^* - 1 \leq \exp( \sum \limits_{k=0}^N \gamma x_k ) - 1.
\end{align*}
Since, for $0 \leq k \leq N$,
\begin{align*}
\frac54 x_k \leq x_{k+1} \leq 4 x_k,
\end{align*}
we see that
\begin{align*}
\frac14 x_{k+1} \leq x_k \leq \frac45 x_{k+1},
\end{align*}
or
\begin{align*}
(\frac14)^{N-k} x_N \leq x_k \leq (\frac45)^{N-k} x_N,
\end{align*}
hence, since $x_N < 1/100$, and therefore $\gamma \leq \frac1{1-x_N} < 100/99$,
\begin{align*}
0 \leq \sum \limits_{k=0}^N \gamma x_k \leq \frac{x_N}{1-x_N} \cdot (\frac45)^N \sum \limits_{k=0}^N (\frac54)^k \leq \frac{x_N}{1-x_N} \cdot 5 < \frac5{99} < \frac1{10}
\end{align*}
So, for every $N \geq 0$,
\begin{align*}
|1 - C_N| \leq \exp(1/10) - 1 \leq 1/5,
\end{align*}
and we conclude that, since $C^* < 1$,
\begin{align*}
C_N \downarrow C^* \geq 4/5.
\end{align*}
\end{proof}

\begin{lemma}\label{EachIterationAtBottomAlmostLikeDerivative}
Suppose that $0 < x_0 < \frac1{100}$, and that $N \geq 0$ is the smallest integer satisfying $\frac1{100} \leq x_{N+1}$.
Then there is a constant $K > 0$ such that
\begin{align*}
\frac1K \cdot \frac1{x_0} \leq \prod \limits_{k=0}^N c(\theta_k) \cdot p'(x_k) \leq K \cdot \frac1{x_0}
\end{align*}
\end{lemma}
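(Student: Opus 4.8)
The plan is to turn the product of multipliers into a telescoping product and then read off both inequalities from a single elementary estimate on the exit value $x_{N+1}$. All of the genuinely analytic work is already contained in \Cref{MultiplierAndDerivativeAreSimilar}, so the argument is short.

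First I would apply \Cref{MultiplierAndDerivativeAreSimilar}, which gives
\[
\prod_{k=0}^N c(\theta_k)\cdot p'(x_k) = C_N \prod_{k=0}^N c(\theta_k)(1-x_k),
\]
where $C_N$ decreases to some $C^* \geq 4/5$, so that $C_N \in [4/5, 1]$ for every $N$. Hence it suffices to bound $\prod_{k=0}^N c(\theta_k)(1-x_k)$ from both sides by a constant times $1/x_0$. The key observation is that along the orbit $x_{k+1} = c(\theta_k)p(x_k) = c(\theta_k)x_k(1-x_k)$, and all the $x_k$ for $0 \leq k \leq N+1$ are strictly positive (they lie in $(0,1)$), so $c(\theta_k)(1-x_k) = x_{k+1}/x_k$ and the product telescopes:
\[
\prod_{k=0}^N c(\theta_k)(1-x_k) = \prod_{k=0}^N \frac{x_{k+1}}{x_k} = \frac{x_{N+1}}{x_0}.
\]

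It then remains to pin down $x_{N+1}$. By minimality of $N$ we have $x_N < \frac{1}{100}$: for $N = 0$ this is the hypothesis $x_0 < \frac{1}{100}$, and for $N \geq 1$ it is forced, since $x_N \geq \frac{1}{100}$ would mean $N-1$ already satisfies the defining inequality. Since $c(\theta) \leq 4$ for all $\theta$, this gives $x_{N+1} = c(\theta_N)x_N(1-x_N) \leq 4 x_N < \frac{1}{25}$, while $x_{N+1} \geq \frac{1}{100}$ by definition of $N$. Therefore $C_N x_{N+1} \in [\frac{4}{5}\cdot\frac{1}{100},\, \frac{1}{25}] = [\frac{1}{125}, \frac{1}{25}]$, and
\[
\prod_{k=0}^N c(\theta_k)\cdot p'(x_k) = C_N\,\frac{x_{N+1}}{x_0}
\]
lies between $\frac{1}{125}\cdot\frac{1}{x_0}$ and $\frac{1}{25}\cdot\frac{1}{x_0}$, so $K = 125$ works. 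I do not anticipate any real obstacle here: the only input beyond the telescoping identity is the already-established uniform two-sided bound on $C_N$ together with the trivial observation that $x_{N+1}$ cannot overshoot $\frac{1}{25}$; if anything, the subtle point is simply to notice that $c(\theta_k)(1-x_k)$ is exactly the orbit ratio $x_{k+1}/x_k$.
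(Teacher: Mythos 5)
Your proof is correct and follows essentially the same route as the paper: apply \Cref{MultiplierAndDerivativeAreSimilar} to peel off the uniformly bounded factor $C_N$, use the telescoping identity $\prod c(\theta_k)(1-x_k) = x_{N+1}/x_0$, and bound the exit value. The only difference is that the paper works with $\prod_{k=0}^{N-1}$ and $x_N$ rather than $\prod_{k=0}^{N}$ and $x_{N+1}$; your indexing actually matches the lemma statement more faithfully and yields the cleaner bound $x_{N+1}\in[1/100,1/25]$ directly.
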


\begin{proof}
By \cref{MultiplierAndDerivativeAreSimilar}
\begin{align*}
\prod \limits_{k=0}^{N-1} c(\theta_k) \cdot p'(x_k) = C_N \prod \limits_{k=0}^{N-1} c(\theta_k)(1-x_k),
\end{align*}
where $C_N$ is bounded from below, irrespective of the value of $x_0$. Since
\begin{align*}
x_0 \prod \limits_{k=0}^{N-1} c(\theta_k)(1-x_k) = x_N,
\end{align*}
it follows that
\begin{align*}
\prod \limits_{k=0}^{N-1} c(\theta_k) \cdot p'(x_k) = C_N \cdot x_N \cdot \frac1{x_0}.
\end{align*}
From the assumptions on the bounds of $x_N$, and since $C_N$ is monotonically decreasing (and hence bounded) the statement follows.
\end{proof}

\begin{lemma}\label{SumProductsSmallAfterPeak}
Suppose that $0 < x_0 < \frac1{100}$, and that $N \geq 0$ is the smallest integer satisfying $\frac1{100} \leq x_{N+1}$. Then
\begin{align*}
\sum \limits_{k=0}^{N-1} \partial_\theta c(\theta_k) \cdot p(x_k) \cdot \prod \limits_{j=k+1}^{N} c(\theta_j) \cdot p'(x_j) = o(x_0^\gamma),
\end{align*}
for every $\gamma < 0$.
\end{lemma}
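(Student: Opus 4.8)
The plan is to show that the sum has absolute value $O(\log(1/x_0))$ as $x_0 \to 0^+$; since $x_0^{|\gamma|}\log(1/x_0) \to 0$ for every $\gamma < 0$, this already gives the stated $o(x_0^\gamma)$. It suffices to bound
\begin{align*}
S = \sum_{k=0}^{N-1} |\partial_\theta c(\theta_k)|\; p(x_k) \prod_{j=k+1}^{N} \bigl|c(\theta_j) p'(x_j)\bigr|.
\end{align*}
First I would record the elementary bookkeeping: by minimality of $N$ one has $0 < x_k < \tfrac1{100}$ for all $0 \le k \le N$, and \cref{AscentFromBottom} together with $c \le 4$ gives $\tfrac54 x_k \le x_{k+1} \le 4x_k$ on this range. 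Iterating the left estimate yields $(\tfrac54)^N x_0 \le x_N < \tfrac1{100}$, so $N \le \log_{5/4}\tfrac1{100 x_0}$; and $x_{N+1} = c(\theta_N)p(x_N) \le \tfrac4{100} = \tfrac1{25}$.

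The key step is to telescope the tail product. Since $x_{j+1} = c(\theta_j)x_j(1-x_j)$, we have $c(\theta_j)(1-x_j) = x_{j+1}/x_j$, and since $p'(x_j) = 1 - 2x_j \le 1 - x_j$ for $x_j \ge 0$ (all factors being positive here, because $x_j < \tfrac12$), this gives
\begin{align*}
0 < \prod_{j=k+1}^{N} c(\theta_j)p'(x_j) \le \prod_{j=k+1}^{N} \frac{x_{j+1}}{x_j} = \frac{x_{N+1}}{x_{k+1}}.
\end{align*}
Combining this with $p(x_k) \le x_k$ and $x_k/x_{k+1} \le \tfrac45$ bounds each summand (before the $|\partial_\theta c|$ factor) by $x_{N+1}\cdot\tfrac45 \le \tfrac4{125}$, so $S \le \tfrac4{125}\,\|\partial_\theta c\|\,N = O(\log(1/x_0))$, which is $o(x_0^\gamma)$ for every $\gamma < 0$.

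The only point that needs a word of care is that $\partial_\theta c(\theta_k)$ is not small near the peaks — on $I_0 + \omega$ it is of order $\lambda$ (\cref{CDerivativeAroundSecondPeak}) — but $\lambda$ is fixed throughout the paper, so $\|\partial_\theta c\|$ is merely a constant factor and does not affect the asymptotics. The genuine content is the telescoping, which trades the product for the ratio $x_{N+1}/x_{k+1}$ and, via the geometric growth $x_{k+1} \ge \tfrac54 x_k$, makes every term uniformly bounded while there are only $O(\log(1/x_0))$ of them; one could alternatively route the tail product through \cref{EachIterationAtBottomAlmostLikeDerivative}, but the direct estimate above is cleaner since only an upper bound is needed.
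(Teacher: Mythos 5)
Your proposal is correct and follows essentially the same strategy as the paper: bound each summand uniformly by telescoping $c(\theta_j)(1-x_j)=x_{j+1}/x_j$ so the tail product collapses to $x_{N+1}/x_{k+1}$, then observe there are only $N=O(\log(1/x_0))$ terms, and $\log(1/x_0)=o(x_0^\gamma)$ for $\gamma<0$. The only (minor) variation is that you replace the paper's appeal to \cref{MultiplierAndDerivativeAreSimilar} (the constant $C_N$) with the one-sided inequality $p'(x)\le 1-x$, which suffices because only an upper bound is needed; this is a small, clean simplification of the same idea.
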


\begin{proof}
\begin{align*}
\sum \limits_{k=0}^{N-1} \partial_\theta c(\theta_k) \cdot p(x_k) \cdot \prod \limits_{j=k+1}^{N} c(\theta_j) \cdot p'(x_j) &=
\sum \limits_{k=0}^{N-1} \partial_\theta c(\theta_k) \cdot (1-x_k)x_k \cdot \prod \limits_{j=k+1}^{N} c(\theta_j) \cdot p'(x_j).
\end{align*}
Since $x_k < x_{k+1}$, we obtain
\begin{align*}
\left| \sum \limits_{k=0}^{N-1} \partial_\theta c(\theta_k) \cdot p(x_k) \cdot \prod \limits_{j=k+1}^{N} c(\theta_j) \cdot p'(x_j) \right| &\leq
\left| \sum \limits_{k=0}^{N-1} \partial_\theta c(\theta_k) \cdot (1-x_k)x_{k+1} \cdot \prod \limits_{j=k + 1}^{N} c(\theta_j) \cdot p'(x_j) \right| = \\
&= \left| \sum \limits_{k=0}^{N-1} \partial_\theta c(\theta_k) \cdot (1-x_k)x_{k+1} \cdot C_{N-k-1} \prod \limits_{j=k + 1}^{N} c(\theta_j) \cdot (1 - x_j) \right|
\end{align*}
Since we had the relation that $N$ is the smallest integer satisfying that
\begin{align*}
\frac1{100} \leq x_{N+1} = \prod \limits_{j=k}^{N} c(\theta_j) \cdot (1 - x_j) x_k \leq \frac4{100} = \frac1{25},
\end{align*}
we obtain the new inequality
\begin{align*}
\left| \sum \limits_{k=0}^{N-1} \partial_\theta c(\theta_k) \cdot p(x_k) \cdot \prod \limits_{j=k+1}^{N} c(\theta_j) \cdot p'(x_j) \right| &\leq
\frac1{25} \cdot C_{N-k-1} \left| \sum \limits_{k=0}^{N-1} \partial_\theta c(\theta_k) \cdot (1-x_k) \right|.
\end{align*}
Since
\begin{align*}
\left| \sum \limits_{k=0}^{N-1} \partial_\theta c(\theta_k) (1 - x_k) \right| &\leq const \cdot N,
\end{align*}
and $N$ is of the order $\log(1/x_0)$, which is of order $o(x_0^{-\gamma})$ for every $\gamma > 0$, the conclusion follows.
\end{proof}

\bibliographystyle{alpha}
\bibliography{references}

\end{document}